      \theoremstyle{definition}
\newtheorem{defi}{Definition}[section]
\theoremstyle{plain}
\newtheorem{thm}[defi]{Theorem}
\newtheorem{prop}[defi]{Proposition}
\newtheorem{cor}[defi]{Corollary}
\newtheorem{lemma}[defi]{Lemma}
\theoremstyle{remark}
\newtheorem{question}[defi]{Question}
\newtheorem{problem}[defi]{Problem}
\newtheorem{ex}[defi]{Example}
\theoremstyle{definition}
\newtheorem*{ack}{Acknowledgement}
\newcommand{\lra}{\leftrightarrow}
\newcommand{\longlra}{\longleftrightarrow}
\newcommand{\longra}{\longrightarrow}
\newcommand{\ke}{{\mathcal E}}
\newcommand{\ko}{{\mathcal O}}
\newcommand{\kp}{{\mathcal P}}
\newcommand{\kq}{{\mathcal Q}}
\newcommand{\C}{{\mathbb C}}
\newcommand{\Q}{{\mathbb Q}}
\newcommand{\Z}{{\mathbb Z}}
\newcommand{\gm}{{\mathfrak m}}
\newcommand{\gp}{{\mathfrak p}}
\newcommand{\rH}{{\rm H}}
\newcommand{\rT}{{\rm T}}
\newcommand{\Aut}{\operatorname{Aut}}
\newcommand{\End}{\operatorname{End}}
\newcommand{\GL}{\operatorname{GL}}
\newcommand{\SL}{\operatorname{SL}}
\newcommand{\Gal}{\operatorname{Gal}}
\newcommand{\Hom}{\operatorname{Hom}}
\newcommand{\NS}{\operatorname{NS}}
\newcommand{\Div}{\operatorname{Div}}
\newcommand{\rank}{\operatorname{rank}}   
\newcommand{\disc}{\operatorname{disc}}
\newcommand{\Stab}{\operatorname{Stab}}
\newcommand{\Spl}{\operatorname{Spl}}
\newcommand{\lcm}{\operatorname{lcm}}
\newcommand{\xdashrightarrow}[2][]{\ext@arrow 0359\rightarrowfill@@{#1}{#2}}
\newcommand{\xdashleftarrow}[2][]{\ext@arrow 3095\leftarrowfill@@{#1}{#2}}
\newcommand{\xdashleftrightarrow}[2][]{\ext@arrow 3359\leftrightarrowfill@@{#1}{#2}}
\def\rightarrowfill@@{\arrowfill@@\relax\relbar\rightarrow}
\def\leftarrowfill@@{\arrowfill@@\leftarrow\relbar\relax}
\def\leftrightarrowfill@@{\arrowfill@@\leftarrow\relbar\rightarrow}
\def\arrowfill@@#1#2#3#4{%
  $\m@th\thickmuskip0mu\medmuskip\thickmuskip\thinmuskip\thickmuskip
   \relax#4#1
   \xleaders\hbox{$#4#2$}\hfill
   #3$%
}
	\title{Decompositions of singular abelian surfaces}
	\author{Roberto Laface}
	\address{Institut f\"{u}r Algebraische Geometrie, Gottfried Leibniz Universit\"{a}t Hannover, Welfengarten 1 30167 Hannover (Germany)}
	\email{laface@math.uni-hannover.de}
	\dedicatory{Dedicated to my father on the occasion of his 50th birthday.}
\begin{document}

\maketitle
\thispagestyle{empty}

\begin{abstract}
Given an abelian surface, the number of its distinct decompositions into a product of elliptic curves has been described by Ma. Moreover, Ma himself classified the possible decompositions for abelian surfaces of Picard number $1 \leq \rho \leq 3$. We explicitly find all such decompositions in the case of abelian surfaces of Picard number $\rho =4$. This is done by computing the transcendental lattice of products of isogenous elliptic curves with complex multiplication, generalizing a technique of Shioda and Mitani, and by studying the action of a certain class group on the factors of a given decomposition. We also provide an alternative and simpler proof of Ma's formula, and an application to singular K3 surfaces.
\end{abstract}

\setcounter{tocdepth}{1}
\tableofcontents

\section{Introduction}
\noindent
After the ground breaking work of Shioda and Mitani \cite{shioda-mitani74}, and of Shioda and Inose \cite{shioda-inose77}, singular abelian surfaces, i.e.~abelian surfaces of maximum Picard number, have played a key role in the theory of K3 surfaces, because of the rich arithmetic information they carry. This data is encoded in the transcendental lattice, and it naturally transfers to singular K3 surfaces by means of a Shioda-Inose structure \cite{morrison84}. The associated singular K3 surface, which has the same transcendental lattice by a result of Shioda and Inose \cite{shioda-inose77}, inherits some of the arithmetic structure of the associated singular abelian surface. This has been employed, for instance, in the study of the field of definition of singular K3 surfaces by Sch\"{u}tt in \cite{schuett07}.\newline

\noindent
In \cite{ma11}, Ma gives a formula for the number of decompositions of an abelian surface into the product of elliptic curves; this expression is in terms of the arithmetic of the transcendental lattice. The proof builds on lattice theoretical methods, and it works for abelian surfaces of any Picard number. Also, he is able to classify all the distinct decompositions of a given abelian surface of Picard number $\rho\leq 3$. However, there is no mention of the possible decompositions that can appear in the case of singular abelian surfaces. The main purpose of this paper is to classify the possible decompositions into the product of two elliptic curves that a singular abelian surface can admit.\\

\noindent 
The present paper consists of two parts: in the first one, we develop techniques that allow us to understand the behavior of and to compute the transcendental lattices of products of two CM elliptic curves which are mutually isogenous. We would like to stress that such techniques are of interest on their own, as they provide generalizations of previous results of Shioda and Mitani \cite{shioda-mitani74} about the geometry of abelian surfaces, and also of Gau\ss~ and Dirichlet in the theory of quadratic forms. The second part of this article is concerned with the problem of classifying all the possible decompositions of a given singular abelian surface, and it is the real motivation behind our studies. In doing so, we have tried to highlight the connection between the geometry of this class of surfaces and the arithmetic of quadratic forms as much as possible.\\

\noindent
The starting point of our study is the computation of the transcendental lattice of certain singular abelian surfaces. This will eventually suggests that we look at more general singular abelian surfaces, and, in order to do so, we will need to introduce and study the basic properties of the \textit{generalized Dirichlet composition}, a notion that generalizes the usual Dirichlet composition of quadratic forms. This notion is crucial for fully understanding how to compute transcendental lattices of arbitrary products of two CM elliptic curves which are mutually isogenous. We remark that this extends previous work of Shioda and Mitani \cite{shioda-mitani74}, where the authors computed the transcendental lattice of very special models of singular abelian surfaces in order to prove the surjectivity of the period map. The conclusion is that computing the transcendental lattice of a singular abelian surface boils down to composing two appropriate quadratic forms by means of the generalized Dirichlet composition (Proposition \ref{thm2}).
\\

\noindent
Afterwards, we turn to the study of the possible decompositions of a given singular abelian surface $A$. We distinguish two cases, according to whether the CM field $K$ of $A$ is one among $\Q(i)$ and $\Q(\sqrt{-3})$, or not. In the latter case, we are able to show that a certain class group acts on the set of decompositions of $A$, and that this action delivers all possible decompositions of $A$ (Theorem \ref{mainthm}). We also give a new proof of Ma's formula for the number of possible decompositions. The cases where $K=\Q(i)$ or $K= \Q(\sqrt{-3})$ are handled separately, according to the number of units in $\ko_{K}$. In both cases, we give a complete classification of the possible decompositions, and we also provide a formula for their number, again distancing ourselves from Ma's approach.\\

\noindent
The paper is organized as follows: in Section \ref{preliminaries} we go over all the necessary notions and basic result we need, and afterwards (Section \ref{firstdec}) we state Ma's result and make a couple of motivating remarks for what is studied thereinafter. In Section \ref{transcendental}, we study composition of forms living in different class groups, and we compute explicitly the transcendental lattice of a product abelian surface of Picard rank 4, getting even more candidate decompositions. Finally, Section \ref{alldec} deals with the problem of distinct decompositions: we show that we have build enough decompositions to match Ma's formula in most cases, and this incidentally leads to a new proof of Ma's formula. Afterwards, in Section \ref{remainingcases}, we completely solve the classification problem for decompositions in the remaining cases, also providing a new approach to the formula giving their number. We conclude the paper with an application to the field of moduli of singular K3 surfaces, and some open problems, in the hope that they might stimulate future research in this direction.

\begin{ack}
I would like to express my gratitude to my advisor Matthias Sch\"{u}tt for fruitful conversations on the topic, and for a key remark that led to the completion of the paper. I also thank Victor Gonz\'alez Alonso and Daniel Loughran for detailed comments and suggestions on earlier drafts of the manuscript.
\end{ack}

\section{Preliminaries}\label{preliminaries}

\subsection{Singular abelian surfaces}

We introduce briefly the basic theory of singular abelian surfaces; for a reference, the reader may see \cite{shioda-mitani74}. We will be
working over the field of complex numbers. If $X$ is a smooth algebraic surface, we can define the N\'{e}ron-Severi lattice of $X$:
it is the group of divisors on $X$, modulo algebraic equivalence, namely
\[\NS(X) := \Div(X) / \sim_{\rm alg},\]
together with restriction of the intersection form on $\rH^2(X,\Z)$. Its rank $\rho(X):=\rank \NS(X)$ is called \textit{Picard number} of $X$; the Picard number measures how many different
curves lie on a surface. By the Lefschetz theorem on $(1,1)$-classes, we have the bound
\[\rho(X) \leq h^{1,1}(X) = b_2(X) - 2p_g(X),\]
where $b_2(X) := \rank \rH^2(X,\Z)$ and $p_g(X) : =\dim_\C \rH^0(X,\omega_X)$.\newline

\noindent
We can consider the lattice 
\[ \rH^2(X,\Z)_\text{free} := \rH^2(X,\Z)/(\text{torsion}),\]
and since $\NS(X) \subset \rH^2(X,\Z)$, also $\NS(X)_\text{free} \subset \rH^2(X,\Z)_\text{free}$; the lattice $\NS(X)_\text{free}$ has signature $(1,\rho(X)-1)$. Its orthogonal complement
$\rT(X) \subset \rH^2(X,\Z)_\text{free}$ is called the \textit{transcendental lattice} of $X$, and it has signature 
\[(2p_g(X), h^{1,1}(X) - \rho(X)).\]
A smooth algebraic surface with maximum Picard number, i.e. $\rho(X) = h^{1,1}(X)$, is called a \emph{singular surface}. In this case, the
transcendental lattice acquires the structure of a positive definite lattice of rank $2p_g(X)$. Throughout the paper, we are going to consider
a special class of surfaces, namely singular abelian surfaces. Letting $A$ be such a surface, $\rho(A) = 4$ and $\rT(A)$ is a positive definite
integral binary form.\newline

\noindent
We now recall the structure of the period map of an abelian surface $A$ (not necessarily singular). The exponential sequence
\[0 \longra \Z \longra \ko_A \longra \ko_A^\times \longra 0\]
yields a long exact sequence in cohomology, from which we can extract a map
\[p_A \, : \, \rH^2(A,\Z) \longra \rH^2(A,\ko_A) \cong \C,\]
since $p_g(A)=1$; the map $p_A$ is called \textit{the period map of $A$}. By using the structure of complex torus of $A$, we make this more
explicit: since 
\[\rH^2(A,\Z) \cong \bigwedge^2 \rH^1(A,\Z) \qquad \text{and} \qquad \rH^1(A,\Z) = \rH_1(A,\Z)^\vee,\]
we can take a basis
$\lbrace v_1, v_2,v_3, v_4 \rbrace$ of $\rH_1(A,\Z)$ and the corresponding dual basis $\lbrace u^1,u^2,u^3,u^4 \rbrace$.
Then, setting $u^{ij}:=u^i \wedge u^j$, we get a basis of $\rH^2(A,\Z)$ by considering 
\[\lbrace u^{ij} \, \vert \, 1 \leq i < j \leq 4 \rbrace,\]
which also gives a basis of $\rH^2(A,\C)$. As an element of $\rH^2(A,\C) \cong \Hom(\rH^2(A,\Z),\C)$ (here we tacitly use Poincar\'e duality), the period map has the following description:
\[p_A = \sum_{i<j} \det(v_i \vert v_j) u^{ij},\]
where the notation $(v_i \vert v_j)$ indicates the matrix whose columns are $v_i$ and $v_j$. Notice that, since $\NS(A) = \ker(p_A)$ and $\rT(A) = (\ker(p_A))^\perp$, this allows us to explicit compute the N\'{e}ron-Severi and the
transcendental lattices. Also, the period map satisfies the period relations 
\[(p_A,p_A)=0 \qquad \text{and} \qquad (p_A,\overline{p_A})>0.\]

\subsection{Class group theory}

We recall a few facts on integral binary quadratic forms; for a detailed account, the reader is suggested to see \cite{cox13}. 
Given a form 
\[Q(x,y)=ax^2+bxy+cy^2\]
the quantity $\gcd(a,b,c)$ is called \textit{index of primitivity} and $Q$ is said \emph{primitive} if $\gcd(a,b,c)=1$. Sometimes, it is convenient to extract the \textit{primitive part} of a form $Q$: this is the quadratic form $Q_0$ such that $mQ_0 =Q$, $m$ being the index of primitivity of $Q$. A form $Q$ \emph{represents} $m \in \Z$ if $m=Q(x,y)$ for some
$x,y \in \Z$; if moreover $\gcd(x,y)=1$, then we say that $Q$ \emph{properly represents} $m \in \Z$. A quadratic form $Q$ as above
will be denoted in short by $Q=(a,b,c)$. Two forms $Q = (a,b,c)$ and $Q'=(a',b',c')$ are equivalent (properly equivalent, respectively) if there exists
$\begin{pmatrix} p & q \\ r & s\end{pmatrix} \in \GL_2(\Z)$ ($\SL_2(\Z)$, respectively) such that 
\[Q(px+qy,rx+sy) = Q'(x,y).\]
The following basic results give a hint on why it is important to know which numbers a form represents.
\begin{lemma}[Lemma 2.3 in \cite{cox13}]
A form $Q$ properly represents $m \in \Z$ if and only if $Q$ is properly equivalent to the form $(m,B,C)$, for some $B,C \in \Z$.
\end{lemma}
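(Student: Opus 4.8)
The plan is to prove both implications directly from the definition of the $\SL_2(\Z)$-action on binary forms recalled just above, with Bézout's identity supplying the change of variables in the nontrivial direction.

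I would first dispatch the easy implication. Suppose $Q$ is properly equivalent to $Q' = (m,B,C)$. Then $Q'(1,0) = m$ and $\gcd(1,0)=1$, so $Q'$ properly represents $m$. Since a matrix in $\SL_2(\Z)$ has integral inverse, it carries primitive pairs to primitive pairs; hence proper representation is invariant under proper equivalence, and $Q$ properly represents $m$ as well.

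For the converse, write $Q = (a,b,c)$ and suppose $Q(p,r) = m$ with $\gcd(p,r)=1$. By Bézout there exist integers $q,s$ with $ps - qr = 1$, so $\left(\begin{smallmatrix} p & q \\ r & s\end{smallmatrix}\right) \in \SL_2(\Z)$. Performing the substitution $(x,y) \mapsto (px+qy,\, rx+sy)$ yields a properly equivalent form $Q'(x,y) = Q(px+qy, rx+sy) = (a',b',c')$, and evaluating at $(x,y)=(1,0)$ gives $a' = Q(p,r) = m$. Thus $Q' = (m,b',c')$ is of the required shape, with $B = b'$ and $C = c'$.

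The only place calling for a moment's attention is the choice of the completing pair $(q,s)$: this is exactly where primitivity of $(p,r)$ enters, since without $\gcd(p,r)=1$ no such element of $\SL_2(\Z)$ exists (and the statement itself would fail). Aside from that, the argument is a routine unwinding of the definition of proper equivalence, so I do not anticipate any real obstacle.
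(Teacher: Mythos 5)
Your argument is correct and is exactly the standard proof of this classical fact (the paper itself only cites Cox, Lemma~2.3, and the proof there proceeds the same way): the forward direction by completing the primitive pair $(p,r)$ to an $\SL_2(\Z)$ matrix via B\'ezout and noting the new leading coefficient is $Q(p,r)=m$, and the converse by evaluating at $(1,0)$ together with invariance of proper representation under $\SL_2(\Z)$. Nothing further is needed.
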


\begin{lemma}[Lemma 2.25 in \cite{cox13}]
Given a form $Q$ and an integer $M$, $Q$ represents infinitely many numbers prime to $M$.
\end{lemma}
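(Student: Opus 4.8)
The plan is to first produce a single pair $(x_0,y_0)\in\Z^2$ with $\gcd(Q(x_0,y_0),M)=1$, and then to manufacture infinitely many represented integers prime to $M$ by homogeneity. Throughout I take $Q=(a,b,c)$ to be primitive, which is the setting of the cited lemma; in particular $Q$ is not the zero form. The case $M=\pm 1$ is trivial (every integer is prime to $M$ and $Q$ takes infinitely many values), so I assume $|M|\ge 2$.

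\emph{Step 1: one value coprime to $M$.} Let $p_1,\dots,p_r$ be the distinct prime divisors of $M$, and fix $p=p_i$. Since $Q$ is primitive, $p$ does not divide all of $a,b,c$, so one of the three test pairs $(1,0),(0,1),(1,1)$ escapes $p$: if $p\nmid a$ take $(1,0)$ (value $a$); otherwise, if $p\nmid c$, take $(0,1)$ (value $c$); otherwise $p\mid a$ and $p\mid c$ force $p\nmid b$, and $Q(1,1)=a+b+c\equiv b\not\equiv 0\pmod p$. Write $(\xi_p,\eta_p)$ for the chosen pair. Since $Q(x,y)\bmod p$ depends only on $x,y\bmod p$, the Chinese Remainder Theorem produces integers $x_0,y_0$ with $x_0\equiv \xi_{p_i}$ and $y_0\equiv\eta_{p_i}\pmod{p_i}$ for every $i$; then no $p_i$ divides $n_0:=Q(x_0,y_0)$, i.e. $\gcd(n_0,M)=1$, and $n_0\neq 0$ because $|M|\ge 2$.

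\emph{Step 2, and the main difficulty.} For each positive integer $j$ with $j\equiv 1\pmod M$ one has $Q(jx_0,jy_0)=j^2 n_0$ by homogeneity of $Q$, and $\gcd(j^2 n_0,M)=1$ since $\gcd(n_0,M)=\gcd(j,M)=1$. As $j$ runs over these infinitely many integers the values $j^2 n_0$ are pairwise distinct (using $n_0\neq 0$), so $Q$ represents infinitely many integers prime to $M$. Essentially all the content lies in Step 1: the interplay of primitivity (so that, prime by prime, one of three explicit test pairs works) with CRT to handle all prime divisors of $M$ simultaneously; Step 2 is then a one-line scaling argument. A variant avoids Step 2 by first dividing $(x_0,y_0)$ by its gcd — which is prime to $M$, hence harmless — to obtain a \emph{proper} representation of some $m_0$ prime to $M$, then invoking Lemma 2.3 in \cite{cox13} to write $Q$ properly equivalent to $(m_0,B,C)$ and reading off the values $(m_0,B,C)(1,kM)=m_0+BMk+CM^2k^2\equiv m_0\pmod M$, which are infinitely many and prime to $M$ (here $C\neq 0$, since the forms we care about are positive definite).
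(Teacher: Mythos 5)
Your proof is correct. Note that the paper does not prove this statement at all: it is quoted from Cox's book (Lemma 2.25 of \cite{cox13}), so the only comparison available is with the standard textbook argument, and yours is essentially that argument — primitivity guarantees, prime by prime, that one of the test pairs $(1,0),(0,1),(1,1)$ yields a value not divisible by $p$, and CRT glues these choices over all $p \mid M$. Your Step 2 (scaling by $j \equiv 1 \pmod{M}$) is a clean way to upgrade ``one value prime to $M$'' to the ``infinitely many'' demanded by the statement as quoted; the usual treatment instead varies $(x,y)$ within the fixed residue classes modulo $M$ and arranges $\gcd(x,y)=1$, so as to obtain \emph{proper} representations — which your concluding variant (divide by the gcd, then invoke Lemma 2.3 and evaluate at $(1,kM)$) also recovers, and which is what is actually needed when this lemma is used alongside Lemma 2.3 elsewhere in the paper. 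You were also right to make the primitivity hypothesis explicit: the statement is false for imprimitive forms (e.g.\ $2x^2+2y^2$ with $M=2$), and the paper's phrasing leaves that assumption implicit.
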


\noindent
The \textit{discriminant} of a form $Q=(a,b,c)$ is the integer $D:=b^2-4ac$. The set of proper equivalence classes of primitive forms of discriminant $D$ is called the \emph{(form) class group of discriminant $D$},
and it is denoted by $C(D)$; we will denote the class of a form $Q$ by $[Q]$. The class group is equipped with the Dirichlet composition
of forms: by \cite[Lemma 3.2]{cox13}, if $Q=(a,b,c)$ and $Q'=(a',b',c')$ are primitive forms of discriminant $D$, such that
\[\gcd\Big(a,a',\frac{b+b'}{2}\Big)=1,\]
then the composition $Q*Q'$ is the form $(aa',B,C)$, where $C=\frac{B^2-D}{4aa'}$ and $B$ is the integer,
unique modulo $2aa'$, such that
\[\left.
\begin{cases}
B \equiv b \mod 2a, \\
B \equiv b' \mod 2a', \\
B^2 \equiv D \mod 4aa'.
\end{cases}\right.\]

\noindent
Naturally, we put $[Q]*[Q']:=[Q*Q']$. Notice that the arithmetic properties of $B$ allow us to rewrite $\tau(Q)$ and $\tau(Q')$:
indeed, we can always assume that
\[\tau(Q) = \frac{-B+\sqrt{D}}{2a}, \qquad \tau(Q')= \frac{-B+\sqrt{D}}{2a'},\]
and furthermore $\gcd(a,a',\frac{b+b'}{2})=1$.\newline

\noindent
Recall that, fixed a quadratic imaginary field $K$, an order $\ko$ is a subring of $K$ containing the unity of $K$ which has also the 
structure of a rank-two free $\Z$-module. Every order $\ko$ can be written in a unique way as
\[\ko=\Z + fw_K \Z, \quad w_K:=\frac{d_K+\sqrt{d_K}}{2}, \quad d_K:=\disc \ko_K, \quad f \in \Z^+.\]
The integer $f$ is called the \emph{conductor} of $\ko$, and it characterizes $\ko$ in a unique way; we will denote the order of conductor $f$ in $\ko_K$ by $\ko_{K,f}$. Similarly, a \emph{module} $M$ in $K$ is a rank-two $\Z$-submodule of $K$ (no condition on the unity). Two modules $M_1$ and $M_2$ are equivalent ($M_1 \sim M_2$) if they are homothetic,
i.e.~there exists $\lambda \in K$ such that $\lambda M_1 = M_2$. To any module $M$, we can associate its \emph{complex multiplication}
(CM) \emph{ring}\footnote{The name CM ring refers to the property $\ko_{M} = \End(\C/M)$.}
\[\ko_M := \lbrace x \in K \ \vert \ x M \subseteq M \rbrace.\]
Notice that $\ko_M$ is an order in $K$, and that equivalent modules in $K$ have the same CM ring. The product module $M_1 M_2$ is
defined in a natural way, and if $f_i$ is the conductor of $M_i$ ($i=1,2$), then $\ko_{M_1 M_2} = \ko_{K,(f_1, f_2)}$, the latter
being the order of conductor $(f_1,f_2)$ in $K$.\newline

\noindent
For an order $\ko$ in a quadratic field $K$, it is possible to define a class group $C(\ko)$: letting $I(\ko)$ denote the group of proper
fractional ideals, meaning those whose CM ring is $\ko$ itself, and letting $P(\ko)$ be the subgroup generated by the principal ones, we
set $C(\ko) := I(\ko)/P(\ko)$, and we call it the \emph{ideal class group} of $\ko$. An important result in algebraic number theory states
that if $\disc \ko = D$, then $C(D) \cong C(\ko)$. From now on, we will use interchangeably the two class groups to our convenience. The order of the class group $C(\ko)$ is called the \emph{class number} of $\ko$, and it is denoted by $h(\ko_{K,f})$, or $h(D)$ according to the isomorphism between form and ideal class group. There is a beautiful formula that describes the order of the class group of an order in terms of its conductor and the maximal order that contains it.

\begin{thm}[Theorem 7.24 in \cite{cox13}]\label{clnumbfor}
Let $\ko_{K,f}$ be the order of conductor $f$ in $\ko_K$. Then
\[h(\ko_{K,f}) = \frac{h(\ko_{K}) \cdot f}{[\ko_K^\times : \ko_{K,f}^\times]} \prod_{p \vert f} \Bigg(1- \Bigg(\dfrac{d_K}{p}\Bigg)\dfrac{1}{p} \Bigg),\]
where $p$ runs over the primes dividing the conductor $f$.
\end{thm}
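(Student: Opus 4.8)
The plan is to deduce the formula from a comparison between the ideal class groups $C(\ko_{K,f})$ and $C(\ko_K)$. Set $\ko := \ko_{K,f}$ and recall (a standard fact about orders in quadratic fields) that the proper fractional $\ko$-ideals are exactly the invertible ones, so that $C(\ko)$ is the Picard group of $\ko$. Since $f\ko_K \subseteq \ko$ and $\ko/f\ko_K \cong \Z/f\Z$, the inclusion $\Z \hra \ko_K$ induces an injection $(\Z/f\Z)^\times \hra (\ko_K/f\ko_K)^\times$. The goal is the exact sequence
\[
1 \longra \ko_K^\times / \ko^\times \longra (\ko_K/f\ko_K)^\times \big/ (\Z/f\Z)^\times \longra C(\ko) \longra C(\ko_K) \longra 1 ,
\]
in which the rightmost nontrivial map is $[\ga] \mapsto [\ga\ko_K]$, the middle map sends the class of $\bar\alpha$ to $[\,\alpha\ko_K \cap \ko\,]$, and the leftmost sends the class of a unit $u \in \ko_K^\times$ to its reduction mod $f$. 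Granting exactness, comparing cardinalities gives
\[
h(\ko_{K,f}) \;=\; \frac{h(\ko_K)}{[\ko_K^\times : \ko_{K,f}^\times]} \cdot \frac{|(\ko_K/f\ko_K)^\times|}{\varphi(f)} ,
\]
so it remains to evaluate $|(\ko_K/f\ko_K)^\times|/\varphi(f)$.

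Establishing the four-term sequence is the core of the argument, and the main obstacle. The first ingredient is that every class in $C(\ko)$ (and likewise in $C(\ko_K)$) is represented by an ideal prime to $f$; this is exactly where Lemma 2.25 of \cite{cox13} is used -- pick the ideal class, view it as a form, represent by it an integer coprime to $f$, and invoke Lemma 2.3 to realize the class by an ideal of the shape $(m, B, C)$ with $m$ prime to $f$. The second ingredient is that the assignment $\ga \mapsto \ga\ko_K$ is a multiplicative bijection from proper $\ko$-ideals prime to $f$ onto $\ko_K$-ideals prime to $f$, with inverse $\gb \mapsto \gb \cap \ko$; this rests on the fact that $\ko$ and $\ko_K$ have the same localizations at every prime not dividing $f$, so that ``away from $f$'' the non-maximal order behaves like the Dedekind domain $\ko_K$. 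Surjectivity of $C(\ko) \to C(\ko_K)$ is then immediate, and a bookkeeping argument with generators identifies its kernel with the image of $(\ko_K/f\ko_K)^\times$ under $\bar\alpha \mapsto [\alpha\ko_K \cap \ko]$; finally one checks that this last map kills precisely the classes coming from $(\Z/f\Z)^\times$ together with the reductions of $\ko_K^\times$, and that a unit of $\ko_K$ reduces into $(\Z/f\Z)^\times$ modulo $f$ iff it already lies in $\ko_{K,f} = \Z + f\ko_K$ -- which pins down the kernel on the far left as $\ko_{K,f}^\times$.

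For the remaining index, the Chinese Remainder Theorem reduces to a prime power $f = p^a$, and one computes $|(\ko_K/p^a\ko_K)^\times|$ according to the splitting type of $p$ in $\ko_K$, i.e.\ according to whether $\big(\tfrac{d_K}{p}\big)$ equals $1$, $-1$ or $0$: the ring $\ko_K/p^a\ko_K$ is then $\Z/p^a \times \Z/p^a$, a local ring with residue field $\mathbb{F}_{p^2}$, or a local ring with residue field $\mathbb{F}_p$, with unit groups of orders $p^{2a}(1-\tfrac1p)^2$, $p^{2a}(1-\tfrac1{p^2})$ and $p^{2a}(1-\tfrac1p)$ respectively. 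In every case this equals $p^{2a}(1-\tfrac1p)\big(1-\big(\tfrac{d_K}{p}\big)\tfrac1p\big)$, so dividing by $\varphi(p^a) = p^{a}(1-\tfrac1p)$ yields $p^{a}\big(1 - \big(\tfrac{d_K}{p}\big)\tfrac1p\big)$; multiplying over the primes $p \mid f$ gives $|(\ko_K/f\ko_K)^\times|/\varphi(f) = f\prod_{p\mid f}\big(1 - \big(\tfrac{d_K}{p}\big)\tfrac1p\big)$, and substituting into the displayed expression for $h(\ko_{K,f})$ completes the proof. The last computation is purely routine; the delicate point, as noted, is the construction and exactness of the class-group sequence.
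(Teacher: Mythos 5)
Your proposal is correct: the four-term exact sequence relating $C(\ko_{K,f})$ to $C(\ko_K)$, together with the local computation of $\#(\ko_K/f\ko_K)^\times/\varphi(f)$ according to the splitting of each $p\mid f$, is precisely the standard argument. The paper itself offers no proof — it simply quotes this as Theorem 7.24 of \cite{cox13} — and your argument is essentially the one given in that cited source, so there is nothing to flag.
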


\subsection{The moduli space of singular abelian surfaces}
Let $\Sigma^{\rm Ab}$ be the moduli space of singular abelian surfaces\footnote{By moduli space, we do not mean the solution to a moduli problem, but rather the set of isomorphism classes of singular abelian surfaces}. In \cite{shioda-mitani74}, Shioda and Mitani described $\Sigma^{\rm Ab}$ by means of the transcendental lattice $\rT(A)$ associated to any singular abelian surface $A$. We say that $\rT(A)$ is \textit{positively oriented} if 
\[\rT(A) = \Z \langle t_1, t_2 \rangle \qquad \text{and} \qquad \text{Im} ( p_A(t_1) / p_A(t_2) ) > 0.\]

\noindent
Notice that the transcendental lattice $\rT(A)$ is an even lattice $\begin{pmatrix} 2a & b \\ b & 2c \end{pmatrix}$, and thus we can always associate to it the quadratic form $(a,b,c)$. This realizes a 1:1 correspondence, and therefore we can naturally see the transcendental lattice as an integral binary quadratic form. We can associate to any quadratic form $Q=(a,b,c)$ an abelian surface $A_Q$. In order to describe the correspondence, we set
\[\tau \equiv \tau(Q):=\dfrac{-b+\sqrt{D}}{2a}, \qquad D:= \disc Q = b^2-4ac,\]
and we will denote by $E_{\tau}$ the elliptic curve $\C / \Lambda_{\tau}$, $\Lambda_{\tau}$ being the lattice $\Z + \tau\Z$.
The abelian surface associated to a form $Q$ is then defined as the product surface
\[A_Q:=E_{\tau} \times E_{a\tau+b}.\]
The mapping $Q \mapsto A_Q$ realizes a 1:1 correspondence between $\SL_2(\Z)$-conjugacy classes of binary forms and isomorphism classes
of singular abelian surfaces, namely
\[ \Sigma^{\rm Ab} \longlra \kq^+ / \SL_2(\Z),\]
$\kq^+$ being the set of positive definite integral binary quadratic forms. By dropping the orientation, we get a 2:1 map
$\Sigma^{\rm Ab} \longra \kq^+/\GL_2(\Z)$, which is just taking the transcendental lattice of an abelian surface:
\[\Sigma^{\rm Ab} \ni [A] \longmapsto [\rT(A)] \in \GL_2(\Z).\]
As a consequence, we get that every singular abelian surface $A$ is isomorphic to the product of two isogenous elliptic curves with
complex multiplication. Therefore, we can ask the following
\begin{question}\noindent
\begin{itemize}
 \item[1.] Given a singular abelian surface $A$, how many distinct decompositions of $A$ into the product of two elliptic curves are there?
 \item[2.] Can we list all the possible decompositions?
\end{itemize}
\end{question}

\noindent
Part 1 has been completely solved by Ma \cite{ma11}, for abelian surfaces of any Picard number. Concerning Part 2, in case $\rT(A) = Q$,
for a primitive form $Q$, the answer is given in \cite[Theorem 4.7]{shioda-mitani74}, and the formula depends on the structure of the
class group of a certain order. We don't discuss this any further here, because, for our purposes, we will need a different, and somehow easier,
interpretation of this result, which will be given in Lemma \ref{cor_ma2}.

\subsection{Class field theory}

For later reference, we need to state a couple of facts from class field theory; see \cite{cox13} for an account on the subject. Let $K$ be
a number field, and let $\gm$ be a \emph{modulus} in $K$, i.e.~a formal product
\[\gm = \prod_\gp \gp^{n_\gp}\]
over all primes $\gp$ of $K$, finite or infinite, where the exponents satisfy
\begin{enumerate}
 \item $n_\gp \geq 0$, and at most finitely many are nonzero;
 \item $n_{\gp} = 0$, for $\gp$ a complex infinite prime;
 \item $n_\gp \leq 1$, for $\gp$ a real infinite prime.
\end{enumerate}
Consequently, any modulus $\gm$ can be written as $\gm = \gm_0 \gm_\infty$, where $\gm_0$ is an $\ko_K$-ideal and $\gm_\infty$ is a product
of distinct real infinite primes of $K$. We define $I_K(\gm)$ to be the group of fractional ideals of $K$ that are coprime to $\gm$, and we
let $P_{K,1}(\gm)$ be the subgroup of $I_K(\gm)$ generated by the principal ideals $\alpha \ko_K$, where $\alpha \in \ko_K$ satisfies
\[\alpha \equiv 1 \mod \gm_0, \, \sigma(\alpha) >0 \, \text{for every real infinite prime } \sigma \vert \gm_\infty .\]
One sees that $P_{K,1}(\gm)$ is of finite index in $I_K(\gm)$. A subgroup $H \subseteq I_K(\gm)$ is called a \emph{congruence subgroup} for
$\gm$ if
\[P_{K,1}(\gm) \subseteq H \subseteq I_K(\gm),\]
and the quotient $I_k(\gm) / H$ is called a \emph{generalized class group} of $\gm$. Let now $L$ be an abelian extension of $K$, and assume
that $\gm$ is divisible by all primes of $K$ that ramify in $L$. Then, for a given prime $\gp$ in $K$, one can define the Frobenius element
associated to $\gp$ by means of the Artin symbol $\big( \frac{L/K}{\gp} \big) \in \Gal(L/K)$, thus defining a map
\[\Phi^{L/K}_\gm : I_K(\gm) \longra \Gal(L/K),\]
called the \emph{Artin map} for $L/K$ and $\gm$.\newline

\noindent
Suppose we have the diagrams of orders

\[\xymatrix{
K & &\ko_{K,f_1} \ar@{^{(}->}[ld] & \\
\ko_K \ar@{^{(}->}[u] & \ko_{K,f_0} \ar@{^{(}->}[l] & & \ko_{K,f} \ar@{^{(}->}[lu] \ar@{^{(}->}[ld] \\
 & &\ko_{K,f_2} \ar@{^{(}->}[lu] &
}
\]

\noindent
where $f_1,f_2 \geq 1$, $f_0= \gcd(f_1,f_2)$ and $f=\text{lcm}(f_1,f_2)$. Let $ i = 0,1,2,\emptyset$; since
\[P_{K,1}(f_i \ko_K) \subseteq P_{K,\Z}(f_i) \subseteq I_K(f_i) = I_K(f_i \ko_K),\]
by the Existence Theorem, there exists a unique abelian extension $L_i / K$ all of whose ramified primes divide $f_i\ko_K$, such that
$\ker(\Phi^{L_i / K}_{f_i\ko_k} ) = P_{K,\Z}(f_i)$, i.e.~$\Gal(L_i/K) \cong C(\ko_{K,f_i})$. This extension is called the
\emph{ring class field} of $\ko_{K,f_i}$; at the level of ring class fields, we get induced a diagram of field extensions.

\[\xymatrix{
& &L_1  \ar@{^{(}->}[rd]  & &\\
H_K \ar@{^{(}->}[r] & L_0 \ar@{^{(}->}[ru] \ar@{^{(}->}[rd] & & L_1 L_2  \ar@{^{(}->}[r]& L \\
 & &L_2   \ar@{^{(}->}[ru]& &
}
\]

\noindent
By Galois theory, we get the following induced diagram of class groups.

\[\xymatrix{
 & & C(\ko_{K,f_1}) \ar[ld] & \\
C(\ko_K)  & C(\ko_{K,f_0}) \ar[l] & & C(\ko_{K,f}) \ar[lu] \ar[ld] \\
 & & C(\ko_{K,f_2}) \ar[lu] &
}
\]
\noindent
In most cases the field $L$ is precisely the composite of $L_1$ and $L_2$, as it is stated in the following

\begin{prop}[Proposition 3.1 in \cite{allombert-bilu-pizarro14}]\label{key_cond}
 Assume all conditions above are satisfied. 
 \begin{enumerate}
  \item If $d_K \neq -3,-4$, then $L = L_1 L_2$.
  \item Assume $d_K \in \lbrace -3,-4 \rbrace$. 
    \begin{enumerate}
      \item If $f_1$ or $f_2$ is equal to 1, or $f_0 >1$, then $L = L_1 L_2$. 
      \item If $f_1,f_2 > 1$ and $f_0 =1$, then $L_1 L_2 \subsetneqq L$; moreover, the extension $L/L_1 L_2$ has degree 2 if $d_K =-4$,
      and degree 3 if $d_K = -3$.
    \end{enumerate}
 \end{enumerate}
\end{prop}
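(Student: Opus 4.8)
The plan is to translate the statement entirely into class groups of orders via the Artin map, and then reduce it to a short computation with unit groups. As recorded in the excerpt, the Artin map induces isomorphisms $\Gal(L_i/K)\cong C(\ko_{K,f_i})$ and $\Gal(L/K)\cong C(\ko_{K,f})$, and these are compatible with the restriction homomorphisms $\Gal(L/K)\to\Gal(L_i/K)$ on one side and with the canonical surjections $C(\ko_{K,f})\to C(\ko_{K,f_i})$ on the other (both record the Artin symbol of a prime $\gp$ coprime to $f$, which is functorial under passing to a subfield). Hence $\Gal(L/L_i)=\ker\bigl(C(\ko_{K,f})\to C(\ko_{K,f_i})\bigr)$, and as $L_1L_2$ is the compositum, $\Gal(L/L_1L_2)=\Gal(L/L_1)\cap\Gal(L/L_2)$. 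Therefore
\[[L:L_1L_2]=\bigl|\ker\bigl(C(\ko_{K,f})\longrightarrow C(\ko_{K,f_1})\times C(\ko_{K,f_2})\bigr)\bigr|,\]
and the proposition reduces to showing that this kernel is trivial in cases (1) and (2a), and has order $[\ko_K^\times:\{\pm1\}]$ --- which equals $2$ for $d_K=-4$ and $3$ for $d_K=-3$ --- in case (2b). I note at once that if $f_1=1$ or $f_2=1$ then $f_0=1$ and $f$ equals the other conductor, so $L_1L_2=L$ trivially; thus in case (2a) only the subcase $f_0>1$ requires an argument.

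The second step is to invoke the exact sequence that enters the proof of Theorem \ref{clnumbfor}: for every $g\geq 1$ there is an exact sequence
\[1\longrightarrow \ko_K^\times/\ko_{K,g}^\times\longrightarrow G_g\longrightarrow C(\ko_{K,g})\longrightarrow C(\ko_K)\longrightarrow 1,\qquad G_g:=(\ko_K/g\ko_K)^\times/(\Z/g\Z)^\times.\]
Along the divisibilities $f_0\mid f_1,f_2\mid f$ these assemble into a commutative ladder, the maps on the $G_g$-terms being the reduction maps $G_f\to G_{f_j}\to G_{f_0}$ and those on the class groups the functorial surjections; every term maps compatibly to the common group $C(\ko_K)$. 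Since the kernel in question maps to $\ker\bigl(C(\ko_K)\to C(\ko_K)\times C(\ko_K)\bigr)=1$, it lies inside $\ker\bigl(C(\ko_{K,f})\to C(\ko_K)\bigr)$, which the exact sequence identifies with $G_f/U_f$, where for any $g$ I write $U_g:=\operatorname{im}\bigl(\ko_K^\times\to G_g\bigr)\cong\ko_K^\times/\ko_{K,g}^\times$. Chasing the ladder then identifies the sought kernel with the kernel of $G_f/U_f\to (G_{f_1}/U_{f_1})\times(G_{f_2}/U_{f_2})$, i.e. with $\phi^{-1}(U_{f_1}\times U_{f_2})/U_f$, where $\phi\colon G_f\to G_{f_1}\times G_{f_2}$ is the pair of reduction maps.

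The third step supplies the structural input. Applying the Chinese Remainder Theorem prime by prime to $(\ko_K/g\ko_K)^\times$ and to $(\Z/g\Z)^\times$ yields $G_g=\prod_{p\mid g}G_{p^{v_p(g)}}$, and the choices $f=\lcm(f_1,f_2)$ and $f_0=\gcd(f_1,f_2)$ then make the natural map $G_f\to G_{f_1}\times_{G_{f_0}}G_{f_2}$ an isomorphism: at each prime $p$ one of $v_p(f_1),v_p(f_2)$ equals $v_p(f)$ and the other equals $v_p(f_0)$, so the local fibre product is formed along an identity map and collapses to $G_{p^{v_p(f)}}$. In particular $\phi$ is injective with image the fibre product, so $\phi^{-1}(U_{f_1}\times U_{f_2})\cong U_{f_1}\times_{G_{f_0}}U_{f_2}$. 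It remains to follow the unit images: a direct check shows $\ko_{K,g}^\times=\{\pm1\}$ as soon as $g>1$, so $U_g\cong\ko_K^\times/\{\pm1\}$ for $g>1$ while $U_1=1$. In case (1) every $U_g$ is trivial and the kernel is $\ker\phi=1$. In case (2a) with $f_0>1$, all four conductors exceed $1$; the composites $U_{f_j}\hookrightarrow G_{f_j}\to G_{f_0}$ are the canonical isomorphism $\ko_K^\times/\{\pm1\}\to\ko_K^\times/\{\pm1\}$, so $U_{f_1}\times_{G_{f_0}}U_{f_2}$ is its graph, of order $|\ko_K^\times/\{\pm1\}|=|U_f|$; since this graph plainly contains $U_f$, it equals $U_f$, and the kernel is trivial. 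In case (2b), $f_0=1$ forces $G_{f_0}$ trivial, whence $U_{f_1}\times_{G_{f_0}}U_{f_2}=U_{f_1}\times U_{f_2}$ has order $|\ko_K^\times/\{\pm1\}|^2$, while $U_f$ sits inside it diagonally with order $|\ko_K^\times/\{\pm1\}|$; the kernel therefore has order $|\ko_K^\times/\{\pm1\}|$, i.e. $2$ if $d_K=-4$ and $3$ if $d_K=-3$, which is the assertion.

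The main obstacle I foresee is bookkeeping rather than any single hard computation, and it sits in two places. First, making the opening translation rigorous means lifting all the congruence subgroups to the common modulus $f\ko_K$ (using $I_K(g\ko_K)=I_K(g)$ for the conductors involved) and verifying that $L_1L_2$ corresponds to the \emph{intersection}, not the product, of the congruence subgroups of $L_1$ and $L_2$. Second, in the exceptional cases $d_K\in\{-3,-4\}$ the whole answer is governed by the four unit groups $U_{f_0},U_{f_1},U_{f_2},U_f$ and the maps among them, and the decisive point is exactly that $U_{f_0}$ degenerates to the trivial group precisely when $f_0=1$; this is the arithmetic reason the dichotomy in the proposition breaks where it does. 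The isomorphism $G_f\cong G_{f_1}\times_{G_{f_0}}G_{f_2}$ is the conceptual core and is also what explains why $\lcm$ and $\gcd$ are the right operations here.
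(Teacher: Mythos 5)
Your argument is correct: the translation via the Artin map of $[L:L_1L_2]$ into the order of $\ker\bigl(C(\ko_{K,f})\to C(\ko_{K,f_1})\times C(\ko_{K,f_2})\bigr)$ is sound, the identification of that kernel with $\phi^{-1}(U_{f_1}\times U_{f_2})/U_f$ via the exact sequence $1\to\ko_K^\times/\ko_{K,g}^\times\to G_g\to C(\ko_{K,g})\to C(\ko_K)\to 1$ is legitimate (using the injectivity of $G_g/U_g\hookrightarrow C(\ko_{K,g})$), and the CRT computation $G_f\cong G_{f_1}\times_{G_{f_0}}G_{f_2}$ together with $\ko_{K,g}^\times=\{\pm1\}$ for $g>1$ delivers exactly the trichotomy in the statement, including the degree $2$ (resp.\ $3$) in case (2b). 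Note, however, that the paper itself offers no proof of this proposition: it is quoted verbatim from \cite{allombert-bilu-pizarro14}, so there is no internal argument to compare yours with; your proof is essentially the standard one underlying that reference, resting on the same unit-group exact sequence that proves the class number formula (Theorem \ref{clnumbfor}), with the conceptual point correctly isolated, namely that the anomaly for $d_K\in\{-3,-4\}$ occurs precisely because $U_{f_0}$ collapses when $f_0=1$ while $U_{f_1},U_{f_2},U_f$ do not.
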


\subsection{Numbers represented by the principal form}

For two sets $S$ and $T$, we say that $S \,\dot{\subset}\, T$ if $S \subseteq T \cup \Sigma$, where $\Sigma$ is a finite set;
analogously, $S \doteq T$ means that both $S \,\dot{\subset}\, T$ and $T \,\dot{\subset}\, S$ hold. Suppose we are now given
a quadratic form $Q$; then, we can ask about the primes represented by $Q$, i.e.~about the set 
\[\kp_Q :=  \lbrace  \text{$p$ prime} \, \vert \, \text{$p$ is represented by $Q$} \rbrace.\]
It turns out that 
\[\kp_Q \doteq \Big\lbrace  \text{$p$ prime} \, \Big\vert \, \text{$p$ unramified in $K$,} \, 
\Big( \dfrac{L/K}{p} \Big)= \langle\sigma\rangle\Big\rbrace =: \hat{\kp}_Q ,\]
where $\langle \sigma \rangle$ is the conjugacy class of the element $\sigma \in \Gal(L/K)$ corresponding to the ideal associated
to the form $Q$, $K$ is the quadratic imaginary field of discriminant $\disc Q$, and $L$ is the ring class field of the order
$\ko$ of discriminant $\disc Q$. Notice that in case $Q= P$, the principal form, then $\hat{\kp}_P = \Spl(L/\Q)$, $\Spl(L/\Q)$
being the set of primes in $\Q$ that split completely in $L$. For later reference, we mention the following
\begin{lemma}[Exercise 8.14 in \cite{cox13}]\label{lemmasplit}
Let $L$ and $M$ be two finite extension of $K$, and let $\kp$ be a prime in $K$ that splits completely in both $L$ and $M$; then $\kp$
splits completely in the composite $LM$. Consequently, $\Spl(LM/K) =\Spl(L/K) \cap \Spl(M/K)$.
\end{lemma}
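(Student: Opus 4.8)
The plan is to reduce to the case where $L$ and $M$ are Galois over $K$ by passing to Galois closures, and then exploit the fact that the Galois group of a compositum of Galois extensions embeds into the product of the Galois groups.

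First I would record the standard auxiliary fact that a prime $\kp$ of $K$ splits completely in a finite extension $N/K$ if and only if it splits completely in the Galois closure $\widetilde N/K$. The ``only if'' direction is the genuinely useful point; the ``if'' direction is the elementary observation that splitting completely is inherited by every intermediate field $K\subseteq E\subseteq N$: since $e$ and $f$ are multiplicative in towers and $\sum_{\mathfrak q\mid\kp}e(\mathfrak q/\kp)f(\mathfrak q/\kp)=[E:K]$, if $e(\mathfrak P/\kp)=f(\mathfrak P/\kp)=1$ for all primes $\mathfrak P$ of $N$ above $\kp$ then the same holds for all primes $\mathfrak q=\mathfrak P\cap E$ of $E$ above $\kp$.

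Next, let $\widetilde L$ and $\widetilde M$ be the Galois closures of $L$ and $M$ over $K$. By hypothesis together with the fact above, $\kp$ splits completely in $\widetilde L$ and in $\widetilde M$. The compositum $\widetilde L\widetilde M/K$ is Galois, and restriction yields an injection $\Gal(\widetilde L\widetilde M/K)\hookrightarrow \Gal(\widetilde L/K)\times\Gal(\widetilde M/K)$. Choose any prime $\mathfrak P$ of $\widetilde L\widetilde M$ above $\kp$ and let $D=D(\mathfrak P/\kp)$ be its decomposition group. Restriction to $\widetilde L$ carries $D$ into the decomposition group of $\mathfrak P\cap\widetilde L$ over $\kp$, which is trivial because $\kp$ splits completely in $\widetilde L$; similarly on the $\widetilde M$ side. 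Since the restriction map is injective, $D$ is trivial, so $\mathfrak P$ is unramified of residue degree $1$ over $\kp$; as $\mathfrak P$ was arbitrary, $\kp$ splits completely in $\widetilde L\widetilde M$. Because $LM\subseteq\widetilde L\widetilde M$, the inheritance property gives that $\kp$ splits completely in $LM$, which is the first assertion. For the displayed consequence, the inclusion $\Spl(LM/K)\subseteq\Spl(L/K)\cap\Spl(M/K)$ follows from inheritance applied to $L,M\subseteq LM$, and the reverse inclusion is precisely what was just proved.

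The only real content is the reduction to Galois extensions via the closure fact; everything afterwards is formal group theory plus the multiplicativity of $e,f$. I therefore expect no serious obstacle here, the main point being to state the Galois-closure equivalence carefully. (An alternative, completely self-contained route avoids Galois closures: $\kp$ splits completely in $N/K$ iff $K_\kp\otimes_K N\cong K_\kp^{[N:K]}$, and since $LM$ is a quotient ring of $L\otimes_K M$ one has that $K_\kp\otimes_K LM$ is a quotient of $K_\kp\otimes_K(L\otimes_K M)\cong (K_\kp\otimes_K L)\otimes_{K_\kp}(K_\kp\otimes_K M)\cong K_\kp^{[L:K][M:K]}$, hence itself a finite product of copies of $K_\kp$; a dimension count then forces $\kp$ to split completely in $LM$, and the statement about $\Spl$ follows as before.)
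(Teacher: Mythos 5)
The paper offers no proof of this lemma at all: it is imported verbatim as Exercise 8.14 of Cox, so there is no in-paper argument to compare yours against; judged on its own merits, your proof is correct. The Galois-case core (restriction gives an injection $\Gal(\widetilde L\widetilde M/K)\hookrightarrow\Gal(\widetilde L/K)\times\Gal(\widetilde M/K)$, which kills the decomposition group of any prime above $\kp$, and complete splitting passes down to the intermediate field $LM$) is sound, as is the deduction of $\Spl(LM/K)=\Spl(L/K)\cap\Spl(M/K)$. The one caveat concerns your reduction to the Galois case: it rests on the unproved direction of your auxiliary fact, namely that complete splitting in $N$ forces complete splitting in the Galois closure $\widetilde N$. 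That statement is true, but it is of essentially the same depth as the lemma itself -- the most common textbook derivation writes $\widetilde N$ as the compositum of the conjugates $\sigma(N)$ and invokes precisely the compositum statement being proved -- so as written that route is borderline circular unless you justify it independently, e.g.\ by the double-coset computation showing that the decomposition group of a prime of $\widetilde N$ above $\kp$ is contained in every conjugate of $\Gal(\widetilde N/N)$, hence in its trivial core. Your parenthetical second argument removes this issue entirely: the criterion that $\kp$ splits completely in $N/K$ iff $K_\kp\otimes_K N\cong K_\kp^{[N:K]}$, together with the fact that $LM$ is a quotient of $L\otimes_K M$ and that any quotient of $K_\kp^{[L:K][M:K]}$ is again a product of copies of $K_\kp$, gives a complete, self-contained proof with no Galois closures; I would promote that to the main argument and keep the decomposition-group proof as the alternative.
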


\section{Some admissible decompositions}\label{firstdec}
\subsection{Number of decompositions}
In \cite{ma11}, Ma solves the problem of finding the number of distinct decompositions of an abelian surface. The techniques
he employs are lattice theoretical, and the formulas strongly depend on the arithmetic of the transcendental lattice. However,
no explicit decomposition is exhibited for singular abelian surfaces.\newline

\noindent
We briefly recall Ma's results for an abelian surface $A$ with Picard number $\rho(A)=4$. Such a surface is necessarily the
product of two isogenous elliptic curves $E_1,E_2$ with complex multiplication. Following \cite{ma11}, if
$A \cong E_1 \times E_2$, then we say that the decomposition $(E_1,E_2)$ is \textit{admissible}. Two decompositions
$(E_1,E_2)$ and $(F_1,F_2)$ of $A$ are \emph{isomorphic} if $E_1 \cong F_1$ and $E_2 \cong F_2$, or $E_1 \cong F_2$ and
$E_2 \cong F_1$, and analogously, two decompositions $(E_1,E_2)$ and $(F_1,F_2)$ of $A$ are \emph{strictly isomorphic} if
$E_1 \cong F_1$ and $E_2 \cong F_2$. Let $\text{Dec}(A)$ be the set of isomorphism classes of decompositions of $A$, and
similarly let $\widetilde{\text{Dec}}(A)$ be the set of strict isomorphism classes of decompositions of $A$. Also, define
\[\delta(A) := \#{\text{Dec}}(A), \qquad \widetilde{\delta}(A):=\#\widetilde{\text{Dec}}(A).\]
To relate $\delta(A)$ and $\widetilde{\delta}(A)$, we consider the number of decompositions into the self-product of an
elliptic curve. To this end, we define
\[ \delta_0(A) := \# \big(\lbrace E \ \text{elliptic curve} : A \cong E \times E \rbrace / \cong \big),\]
and we have the obvious relation
\[\widetilde{\delta}(A) = 2 \delta(A) - \delta_0(A).\]

\noindent
For $n>1$, let $\tau(n)$ be the number prime factors of $n$, and set $\tau(1)=1$. Moreover, for a quadratic form $Q$,
let $g(Q)$ denote its genus, i.e.~the set of isometry classes of lattices isogenous to $Q$, and let $\widetilde{g}(Q)$ denote
its proper genus, i.e.~the set of isometry classes of oriented lattices isogenous to $Q$. Then we have the following result

\begin{thm}[Theorem 1.2, Theorem 1.3, Example 5.13 in \cite{ma11}]\label{thm_ma}
Let $A$ be an abelian surface of Picard number $\rho(A)=4$.
\begin{enumerate}
\item If $\rT(A)$ is not isometric to $\begin{pmatrix} 2n & 0 \\ 0 & 2n \end{pmatrix}$ or $\begin{pmatrix} 2n & n \\ n & 2n \end{pmatrix}$, $n>1$, one has
\[\delta(A) = \sum_{T \in g(\rT(A))} \# \big( O(A_T)/O(T) \big), \qquad \widetilde{\delta}(A)= 2^{-1} \cdot \#\widetilde{g}(\rT(A)) \cdot \# O(A_{\rT(A)}).\]
\item If $\rT(A) \cong \begin{pmatrix} 2n & 0 \\ 0 & 2n \end{pmatrix}$, then
\[\delta(A) = (2^{-4}+2^{-\tau(n)-3})\cdot \# O(A_{\rT(A)}), \qquad \widetilde{\delta}(A)=2 \delta(A).\]
\item If $\rT(A) \cong \begin{pmatrix} 2n & n \\ n & 2n \end{pmatrix}$, then
\[\delta(A) = \begin{cases} 3^{-2}\cdot (2^{-2}+2^{-\tau(n)}) \cdot \# O(A_{\rT(A)}) & n\text{ odd} \\ 3^{-2}\cdot (2^{-2}+2^{-\tau(2^{-1}n)}) \cdot \# O(A_{\rT(A)}) & n\text{ even} \end{cases}, \qquad \widetilde{\delta}(A)=2 \delta(A).\]
\item If $\rT(A)$ is either $\begin{pmatrix} 2 & 0 \\ 0 & 2 \end{pmatrix}$ or $\begin{pmatrix} 2 & 1 \\ 1 & 2 \end{pmatrix}$, then $\delta(A) = \delta_0(A) = \widetilde{\delta}(A)=1$.
\end{enumerate}
\end{thm}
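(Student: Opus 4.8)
The plan is to combine the Torelli theorem for abelian surfaces with the Shioda--Mitani correspondence $\Sigma^{\mathrm{Ab}}\longleftrightarrow\kq^+/\SL_2(\Z)$ recalled above: since $\rho(A)=4$, the surface $A$ (together with its orientation) is recovered from the positively oriented form $\rT(A)$, and in any decomposition $A\cong E_1\times E_2$ the curves $E_1,E_2$ are isogenous CM curves over the imaginary quadratic field $K$ determined by $\disc\rT(A)$. First I would encode a decomposition as ideal data: writing $E_i=\C/\ga_i$ with $\ga_i$ a proper fractional ideal of the order $\ko_{K,f_i}$, the generalized Dirichlet composition (Proposition~\ref{thm2}) expresses $\rT(E_1\times E_2)$ as a composition $[\ga_1]*[\ga_2]$ of the associated forms. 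Hence $\widetilde{\text{Dec}}(A)$ is identified with the set of pairs $(\ga_1,\ga_2)$, each taken up to homothety, subject to the compatibility condition $\lcm(f_1,f_2)=f_0$ (with $f_0$ the conductor of the order attached to $\rT(A)$) and $[\ga_1]*[\ga_2]=[\rT(A)]$ as oriented classes, while $\text{Dec}(A)$ is the further quotient by the swap $(\ga_1,\ga_2)\mapsto(\ga_2,\ga_1)$. The discrepancy is exactly the number $\delta_0(A)$ of self-decompositions, so $\widetilde\delta=2\delta-\delta_0$ is automatic.

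Next I would introduce the group action. For every divisor $f\mid f_0$ the class group $C(\ko_{K,f})$ acts on such pairs by $\gc\cdot(\ga_1,\ga_2)=(\gc\,\ga_1,\gc^{-1}\ga_2)$, leaving both $[\ga_1]*[\ga_2]$ and the $\lcm$-condition unchanged; the core of Section~\ref{alldec} (Theorem~\ref{mainthm}) is that when $d_K\neq-3,-4$ this action is transitive, so $\widetilde{\text{Dec}}(A)$ is a single orbit and $\widetilde\delta(A)=\#G/\#\operatorname{Stab}$ for the relevant product $G$ of class groups, whose order is computable from the class number formula (Theorem~\ref{clnumbfor}). Translating orbits of ideals into sublattices, a decomposition amounts (via Torelli and the structure of $\NS$ of a product) to a splitting $\NS(A)\cong U\oplus R$ with $R$ negative definite of rank $2$ and appropriate positivity; because $U$ is unimodular, $\NS(A)$ and $\rT(A)$ carry opposite discriminant forms and $R(-1)$ runs through the genus $g(\rT(A))$, while for a fixed isometry class of $R$ the $O(\NS(A))$-classes of such $U\hookrightarrow\NS(A)$ are counted by $\#\bigl(O(A_T)/O(T)\bigr)$ through Nikulin's classification of primitive embeddings — unimodularity of $U$ collapses the gluing data and identifies $A_{\NS(A)}$ with $A_{\rT(A)}$. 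Summing over $g(\rT(A))$ gives $\delta(A)=\sum_{T\in g(\rT(A))}\#(O(A_T)/O(T))$, and keeping track of orientations together with the action of $-\operatorname{id}\in O(A_{\rT(A)})$ (the source of the $\tfrac12$) gives $\widetilde\delta(A)=\tfrac12\,\#\widetilde g(\rT(A))\,\#O(A_{\rT(A)})$. This is part~(1).

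For parts (2)--(4) one has $K=\Q(\ima)$ or $\Q(\sqrt{-3})$, i.e.\ $d_K\in\{-4,-3\}$, and the excluded lattices are exactly the multiples $n\cdot(1,0,1)$ and $n\cdot(1,1,1)$ of the two forms of discriminant $-4$ and $-3$; these are the only forms admitting the extra automorphism of order $4$, resp.\ $6$, coming from multiplication by $\ima$, resp.\ by a primitive cube root of unity. Two things change relative to part~(1): the extra units collapse isomorphism classes of CM curves, which is where the index $[\ko_K^\times:\ko_{K,f}^\times]$ in Theorem~\ref{clnumbfor} and the parity of $n$ enter; and by Proposition~\ref{key_cond}(2) the ring class field $L$ strictly contains the compositum $L_1L_2$, so the class-group action is no longer transitive but breaks into $[L:L_1L_2]\in\{2,3\}$ orbits. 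I would then redo the stabilizer/orbit count, using $\Spl(L_1L_2/\Q)=\Spl(L_1/\Q)\cap\Spl(L_2/\Q)$ (Lemma~\ref{lemmasplit}) and evaluating the class numbers as products over $p\mid n$ via Theorem~\ref{clnumbfor}; this produces the correction factors $2^{-4}+2^{-\tau(n)-3}$ and $3^{-2}\bigl(2^{-2}+2^{-\tau(n)}\bigr)$ (with $\tau$ on the odd part of $n$ in the second family), the base case $n=1$ degenerating to the unique self-decomposition $A\cong E\times E$ with $\End(E)=\ko_K$, which is part~(4).

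The main obstacle is precisely this exceptional analysis when $d_K\in\{-3,-4\}$: one must control simultaneously the stabilizer of a decomposition under the class-group action, where $\pm1$ no longer exhausts $\ko_K^\times$, and the genuine failure of $L=L_1L_2$, and then verify that the two effects combine into the stated closed forms — in particular that only the odd part of $n$ appears in the $n\cdot(1,1,1)$ case. Everything in the first two paragraphs should be essentially formal once Proposition~\ref{thm2} and Nikulin's primitive-embedding machinery are available.
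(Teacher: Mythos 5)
You should first note that the paper does not prove this statement at all: Theorem \ref{thm_ma} is imported verbatim from Ma (Theorems 1.2, 1.3 and Example 5.13 of \cite{ma11}), and the paper's own contribution in this direction comes only later, namely an independent re-derivation of the \emph{counts} via explicit decompositions — the Proposition in the ``Alternative proof of Ma's formula'' subsection for the non-exceptional discriminants, and Theorems \ref{disc-4}, \ref{disc-3} for $\rT(A)$ a multiple of $(1,0,1)$ or $(1,1,1)$, where the class number $h(\ko_K)=1$ allows a direct count. So your proposal has to stand on its own as a reconstruction of Ma's lattice-theoretic argument spliced with the class-group method of Sections \ref{alldec}--\ref{remainingcases}, and as it stands it has genuine gaps.

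Concretely: (i) your encoding of decompositions uses the condition ``$\lcm(f_1,f_2)=f_0$'', but Proposition \ref{thm_shioda-mitani} requires $\gcd(f_1,f_2)=f_0$ and $f_1f_2=nf_0^2$ (equivalently $\lcm(f_1,f_2)=nf_0=f$); with your condition only the pair $(f_0,f_0)$ would be admissible and the count collapses. (ii) Even for $d_K\neq-3,-4$, the action of $C(\ko_{K,f})$ is transitive only on the decompositions with a \emph{fixed} conductor pair $(f_1,f_2)$ (Theorems \ref{thmthm}, \ref{mainthm}); $\widetilde{\text{Dec}}(A)$ is a union of $2^{\tau(n)}$ such orbits, not a single orbit, and this factor is exactly what produces $\widetilde{\delta}(A_n)=2^{\tau(n)}h(n^2D)$ in Corollary \ref{cor_ma}. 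Moreover, passing from this ideal-theoretic count (or from $\sum h(\ko_{K,f_1})h(\ko_{K,f_2})$) to Ma's expressions $\sum_{T\in g(\rT(A))}\#\big(O(A_T)/O(T)\big)$ and $2^{-1}\#\widetilde g(\rT(A))\cdot\#O(A_{\rT(A)})$ requires genus theory and discriminant-form computations that you do not supply, and the bijection between product decompositions and splittings $\NS(A)\cong U\oplus R$ is asserted rather than proved — that bijection is the geometric heart of Ma's proof. (iii) For $d_K\in\{-3,-4\}$ your plan is to redo the stabilizer/orbit count using $[L:L_1L_2]\in\{2,3\}$ from Proposition \ref{key_cond}; but this is precisely where the class-group-action method breaks down — the paper explicitly leaves open whether the action spans all decompositions in these cases and instead counts directly, using $h(\ko_K)=1$ and the Shioda--Mitani conditions, obtaining $(1+2^{\tau(n)-1})h(\ko_{K,n})$ resp.\ $\tfrac23(2+2^{\tau(n)-1})h(\ko_{K,n})$ — and you would still have to convert such counts into the stated formulas involving $\#O(A_{\rT(A)})$ and the parity dichotomy in case (3), which your sketch does not do.
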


\noindent
Under the assumption that $\rT(A)$ is primitive, Shioda and Mitani \cite[Theorem 4.7]{shioda-mitani74} proved a formula for
the number of decompositions which depended only on the structure of a certain class group. In \cite{ma11}, the aforementioned
formula is given the following interpretation

\begin{cor}[Corollary 5.11 in \cite{ma11}]\label{cor_ma2}
Let $A$ be a singular abelian surface having primitive transcendental lattice $\rT(A)$, and let $D:=- \det \rT(A) <0$. Then, $\widetilde{\delta}(A) = h(D)$.
\end{cor}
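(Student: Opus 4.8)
The plan is to derive the statement from Theorem~\ref{thm_ma} together with the classical genus theory of binary quadratic forms; this is how Ma obtains it from his main theorem, and it amounts to rephrasing \cite[Theorem~4.7]{shioda-mitani74} through the form/ideal class group dictionary. Write $D = d_K f^2$, where $d_K = \disc\ko_K$ and $f$ is the conductor of the unique order $\ko := \ko_{K,f}$ of discriminant $D$; since $C(\ko)\cong C(D)$ one has $h(\ko) = h(D)$. The shapes $\begin{pmatrix} 2n & 0 \\ 0 & 2n \end{pmatrix}$ and $\begin{pmatrix} 2n & n \\ n & 2n \end{pmatrix}$ excluded in Theorem~\ref{thm_ma}(1) have index of primitivity $n$, hence are imprimitive as soon as $n>1$; so a primitive $\rT(A)$ is never excluded, and Theorem~\ref{thm_ma}(1) applies, giving $\widetilde\delta(A) = \tfrac12\,\#\widetilde g(\rT(A))\cdot\# O(A_{\rT(A)})$.

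It then remains to prove the purely arithmetic identity $\#\widetilde g(Q)\cdot\# O(A_Q) = 2h(D)$ for a primitive positive definite binary form $Q$ of discriminant $D$. For the first factor I would quote Gauss's genus theory (see \cite{cox13}): the principal genus is the group of squares $C(D)^2$, every genus is a coset of it, and all genera are equinumerous, so $\#\widetilde g(Q) = |C(D)^2| = h(D)/\mu$, where $\mu := [C(D):C(D)^2]$ is the number of genera. For the second factor, the automorphism group of the finite quadratic form $q_Q$ carried by the discriminant group $A_Q = Q^\vee/Q$ has order $\# O(A_Q) = 2\mu$; multiplying the two identities gives $2h(D)$, whence $\widetilde\delta(A) = \tfrac12\cdot 2h(D) = h(D)$. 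The step I expect to be the real obstacle is precisely $\# O(A_Q) = 2\mu$: one must identify the $2$-group that governs the number of genera (the product of the genus characters indexed by the prime divisors of $D$) with half the order of $\Aut(A_Q,q_Q)$, a computation local at each prime dividing $D$, with the usual care needed at $p=2$ and at the non-fundamental part of $D$.

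Finally, I would record the geometric reason behind the equality, in the spirit of Shioda--Mitani, although in the form below it rests on the transcendental-lattice computations carried out only in Section~\ref{transcendental}. Through the correspondence $\Sigma^{\rm Ab}\leftrightarrow\kq^+/\SL_2(\Z)$ one presents $A$ as $A_Q = E_{\tau(Q)}\times E_{a\tau(Q)+b}\cong \C/\ga\times\C/\ko$, where $\ga = \bigl[\,a,\ \tfrac{-b+\sqrt D}{2}\,\bigr]$ is a proper $\ko$-ideal whose class in $C(\ko)\cong C(D)$ is $[Q]$ (with suitable orientation conventions). One then checks that every decomposition $A\cong E_1\times E_2$ has both factors with complex multiplication by $\ko$: if $E_i$ has CM by the order of conductor $f_i$, the transcendental lattice of $E_1\times E_2$ has index of primitivity $\lcm(f_1,f_2)/\gcd(f_1,f_2)$, so primitivity of $\rT(A)$ forces $f_1 = f_2 = f$. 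Writing $E_i = \C/\ga_i$ with $[\ga_i]\in C(D)$ and using that $\rT(\C/\ga_1\times\C/\ga_2)$ is the Dirichlet composition $[\ga_1]*[\ga_2]$, one sees that strict isomorphism classes of decompositions of $A$ correspond bijectively to ordered pairs $([\ga_1],[\ga_2])\in C(D)^2$ with $[\ga_1]*[\ga_2] = [Q]$; there are exactly $|C(D)| = h(D)$ of these, since $[\ga_1]$ is arbitrary and then $[\ga_2] = [\ga_1]^{-1}*[Q]$. This is the conceptual route, but since the reduction to Theorem~\ref{thm_ma} is self-contained at this point of the paper, that is the one I would actually write up.
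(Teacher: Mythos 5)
The paper itself gives no proof of this statement: it is quoted verbatim as Corollary 5.11 of \cite{ma11}, so any argument is necessarily ``different'' from the text, and yours is essentially a reconstruction of Ma's own derivation. That main route is correct: the shapes excluded in Theorem \ref{thm_ma}(1) have primitivity index $n>1$, hence never occur for primitive $\rT(A)$; the count $\#\widetilde g(Q)=h(D)/\mu$ is the principal genus theorem (every genus is a coset of $C(D)^2$); and the remaining input $\#O(A_Q)=2\mu$ is a true, classical fact about discriminant forms of \emph{primitive} even binary lattices --- it is precisely the lemma Ma supplies before his Corollary 5.11, and, as you yourself flag, it is the one step that in a self-contained write-up must either be proved (a prime-by-prime computation of $O(A_{Q,p},q_p)$, with the usual case analysis at $p=2$) or cited; note it visibly fails for imprimitive lattices such as $(n,0,n)$, which is why Ma needs separate formulas there. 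Your closing ``conceptual'' paragraph is, in effect, the route this paper later formalizes: Proposition \ref{thm2} gives that the primitivity index of $\rT(E_1\times E_2)$ is $\lcm(f_1,f_2)/\gcd(f_1,f_2)$, forcing equal conductors, and Theorem \ref{thmthm} / Theorem \ref{classthm} then produce the $h(D)$ pairwise distinct decompositions $E_{\tau([Q_0]\circledast[R])}\times E_{\tau([P]\circledast[R]^{-1})}$, $[R]\in C(D)$ (the exclusions for $d_K\in\{-3,-4\}$ are vacuous here, since $f_1=f_2$ forces $f_0=f_1$); but since that machinery only appears in Sections \ref{transcendental}--\ref{alldec}, your decision to write up the reduction to Theorem \ref{thm_ma} (or simply cite \cite{ma11}, as the paper does) is the right call at this point of the text. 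The only caution in the geometric sketch is the orientation convention when identifying $\rT(\C/\ga_1\times\C/\ga_2)$ with the class of the product ideal, which you acknowledge and which does not affect the count.
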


\noindent
Concerning the study of singular abelian surfaces with imprimitive transcendental lattice $\rT(A)$, Ma \cite{ma11} gives an
analogous formula for $\rT(A)$ not isometric to $\begin{pmatrix} 2n & 0 \\ 0 & 2n \end{pmatrix}$ or
$\begin{pmatrix} 2n & n \\ n & 2n \end{pmatrix}$, $n>1$.

\begin{cor}[Corollary 5.12 in \cite{ma11}]\label{cor_ma}
Let $A$ be a singular abelian surface having primitive transcendental lattice $\rT(A)$, and let $D:=- \det \rT(A) <0$. Let $A_n$ be the singular abelian surface of transcendental lattice $n \cdot \rT(A) := \rT(A)[n]$. If $\det \rT(A) \neq 3,4$, then $\widetilde{\delta}(A_n) = 2^{\tau(n)} \cdot h(n^2D)$.
\end{cor}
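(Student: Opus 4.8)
The plan is to reduce the computation of $\widetilde{\delta}(A_n)$ to a purely arithmetic count about composition of forms, and then to settle that count by means of the class number formula (Theorem \ref{clnumbfor}). Write $\rT(A)=Q=(a,b,c)$, a primitive form of discriminant $D=f^2 d_K$, set $K=\Q(\sqrt{d_K})$ and let $\ko:=\ko_{K,f}$ be the order of discriminant $D$. Then $\rT(A_n)=nQ$ is a form of discriminant $n^2D$ whose index of primitivity is $n$. Up to strict isomorphism, every decomposition of $A_n$ has the form $(\C/M_1,\C/M_2)$ with $M_i$ a proper ideal of an order $\ko_{K,f_i}$; the conductor $f_i$ is an invariant of the homothety class of $M_i$, and two such decompositions are strictly isomorphic exactly when the ideal classes $[M_i]\in C(\ko_{K,f_i})$ coincide. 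By the Shioda--Mitani correspondence $\Sigma^{\rm Ab}\longleftrightarrow \kq^+/\SL_2(\Z)$, one has $\C/M_1\times\C/M_2\cong A_n$ if and only if $\rT(\C/M_1\times\C/M_2)$ is $\SL_2(\Z)$-equivalent to $nQ$, and by Proposition \ref{thm2} the lattice $\rT(\C/M_1\times\C/M_2)$ is the generalized Dirichlet composition of the forms attached to $M_1$ and $M_2$. Hence $\widetilde{\delta}(A_n)$ equals the number of pairs of ideal classes whose generalized Dirichlet composition is $\SL_2(\Z)$-equivalent to $nQ$.

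First I would determine the admissible conductors. The generalized Dirichlet composition of primitive forms of discriminants $f_1^2 d_K$ and $f_2^2 d_K$ has discriminant $\lcm(f_1,f_2)^2 d_K$ and index of primitivity $\lcm(f_1,f_2)/\gcd(f_1,f_2)$; comparing with $nQ$ forces $\lcm(f_1,f_2)=nf$ and $\gcd(f_1,f_2)=f$. A prime-by-prime inspection then shows that the ordered pairs $(f_1,f_2)$ enjoying these two properties are exactly those obtained by assigning, for each prime $p\mid n$, the whole $p$-part $p^{v_p(nf)}$ to one of $f_1,f_2$ and $p^{v_p(f)}$ to the other; since $n>1$, there are precisely $2^{\tau(n)}$ such pairs, and each of them has $f_1\neq f_2$. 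Decompositions belonging to distinct conductor patterns are never strictly isomorphic, since the (ordered) pair of conductors is a strict-isomorphism invariant; hence it suffices to count, for a fixed admissible pair $(f_1,f_2)$, the class pairs $([M_1],[M_2])\in C(\ko_{K,f_1})\times C(\ko_{K,f_2})$ whose generalized Dirichlet composition is $\SL_2(\Z)$-equivalent to $nQ$.

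The crucial structural point, and the one where the hypothesis $\det\rT(A)\neq 3,4$ enters, is that the generalized Dirichlet composition induces a surjective group homomorphism
\[
C(\ko_{K,f_1})\times C(\ko_{K,f_2})\longrightarrow C(D)
\]
onto the class group $C(D)$ (its value being the class of the primitive part of the composed form). Indeed, the hypothesis excludes only the two pairs $(d_K,f)\in\{(-3,1),(-4,1)\}$, and outside them Proposition \ref{key_cond}, applied to the diagram of orders with $f_0=\gcd(f_1,f_2)=f$, gives $L=L_1L_2$ for every admissible conductor pattern, which is precisely what forces the above map to be the expected homomorphism with no correction term; in the two excluded cases $\rT(A_n)$ is $\begin{pmatrix}2n&0\\0&2n\end{pmatrix}$ or $\begin{pmatrix}2n&n\\n&2n\end{pmatrix}$, treated separately (cf. Theorem \ref{thm_ma}). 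Granting surjectivity, all fibres of the homomorphism have the same cardinality $h(\ko_{K,f_1})\,h(\ko_{K,f_2})/h(D)$, in particular the fibre over $[Q]$. Finally, Theorem \ref{clnumbfor} together with a routine manipulation of the resulting products — using $\gcd(f_1,f_2)=f$, $\lcm(f_1,f_2)=nf$, and the cancellation of the unit indices $[\ko_K^\times:\ko_{K,f_i}^\times]$ (all equal to $1$ when $d_K\neq-3,-4$, and all equal to one another when $d_K\in\{-3,-4\}$, for then $f,f_1,f_2,nf$ are all $>1$) — shows that this common cardinality equals $h(\ko_{K,nf})=h(n^2D)$. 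Summing over the $2^{\tau(n)}$ admissible conductor patterns yields $\widetilde{\delta}(A_n)=2^{\tau(n)}\cdot h(n^2D)$.

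The main obstacle is the structural assertion just used, namely the identification of the image and the fibres of the generalized-Dirichlet-composition map on class groups; the remaining steps — the prime-by-prime description of the admissible conductor patterns, the cancellation of unit indices, and the evaluation of the class-number ratio — are elementary once that is in hand. That assertion rests in turn on the explicit formula for the transcendental lattice of a product of two mutually isogenous CM elliptic curves (Proposition \ref{thm2}) together with the ring-class-field computation of Proposition \ref{key_cond}. As an alternative way of establishing that the list built above is complete, one may instead invoke the class-group action of Theorem \ref{mainthm} and identify its orbits with the $2^{\tau(n)}$ admissible conductor patterns.
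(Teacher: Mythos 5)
Your proof is correct in substance, but it takes a genuinely different route from the one the paper offers for Ma's formula. The paper's argument (Section \ref{alldec}) constructs, for each of the $2^{\tau(n)}$ conductor patterns, an orbit of decompositions by twisting the factors by $C(D)$, proves that these are pairwise distinct via Theorem \ref{thmthm} (whose proof is where Proposition \ref{key_cond} and the splitting-prime characterization of the principal form are actually used), and then establishes completeness by matching the \emph{aggregate} count over the whole space $\Sigma^{\rm Ab}(D,n)$ against the class-number identity. You instead work surface by surface: you identify $\widetilde{\delta}(A_n)$ with the number of pairs of ideal classes whose generalized Dirichlet composition lies in the class of $\rT(A_n)$, and count each conductor pattern as a fibre of the composition map $C(\ko_{K,f_1})\times C(\ko_{K,f_2})\to C(\ko_{K,f})$. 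This bypasses the stabilizer theorem entirely, needs no summation over the moduli space, and has the added virtue that the same bookkeeping, with only the unit indices changing, recovers the exceptional counts of Theorems \ref{disc-4} and \ref{disc-3}.

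One justification, however, is misplaced, even though the statement it is meant to support is true. That $\circledast$ induces a surjective homomorphism $C(\ko_{K,f_1})\times C(\ko_{K,f_2})\to C(\ko_{K,f})$ with all fibres of cardinality $h(\ko_{K,f_1})h(\ko_{K,f_2})/h(\ko_{K,f})$ has nothing to do with Proposition \ref{key_cond}, nor with the hypothesis $\det\rT(A)\neq 3,4$: on ideal classes $\circledast$ is literally multiplication $([\mathfrak{a}],[\mathfrak{b}])\mapsto[\mathfrak{a}\mathfrak{b}]$, i.e.\ the product of the two extension maps $C(\ko_{K,f_i})\to C(\ko_{K,f})$ followed by multiplication in $C(\ko_{K,f})$, and those extension maps are always surjective — this is exactly the transitivity statement proved at the beginning of Section \ref{alldec}. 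Proposition \ref{key_cond}, which compares $L$ with $L_1L_2$, is needed for the freeness result (Theorem \ref{thmthm}) that your route never uses, so invoking it as "what forces the map to be the expected homomorphism" is not an argument. The hypothesis $\det\rT(A)\neq 3,4$ enters exactly once, namely in the unit-index cancellation when you evaluate $h(\ko_{K,f_1})h(\ko_{K,f_2})/h(\ko_{K,f})=h(\ko_{K,nf})=h(n^2D)$ via Theorem \ref{clnumbfor}; you do carry out that step correctly (and it is precisely the computation in the paper's Proposition on Ma's formula). With that misattribution removed, and with the implicit assumption $n>1$ made explicit (so that the $2^{\tau(n)}$ pattern count does not clash with Corollary \ref{cor_ma2}), the proof stands.
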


\subsection{Some explicit decompositions}
We now explain how to obtain a first type of decompositions, which are related to the primitive part of the transcendental lattice.
The idea comes from \cite{shioda-mitani74}, in particular their explicit description of the period map for singular abelian surfaces. Notice that Shioda and Mitani
gave a method to construct a singular abelian surface $A_Q$ of transcendental lattice $\rT(A)=Q$. Although the idea is the same, at
some point we will need some number theoretical statement from class group theory, crucial for our computations.\newline

\noindent
We now briefly explain where we got the idea from. In \cite{shioda-mitani74}, there is given a criterion to establish whether a
certain decomposition is admissible. If $A$ has transcendental lattice $Q=mQ_0$, with $Q_0$ primitive, let 
\[M_0:=Z + \tau(Q) \Z=\Z + \tau(Q_0) \Z,\] and let $f_0$ be the conductor of its CM ring 
\[\ko_{\Z + \tau(Q) \Z}=\Z + \frac{-b_0+\sqrt{D_0}}{2} \Z.\]

\begin{prop}[Proposition 4.5 in \cite{shioda-mitani74}]\label{thm_shioda-mitani}
Let $A_Q$ be the abelian surface associated to the quadratic form $Q$, and let $M_i$ be the module of conductor $f_i$ in $K=\Q(\tau(Q))$, $i=1,2$. Then $A \cong \C/M_1 \times \C/M_2$ if and only if $M_1 M_2 \sim M_0$, $(f_1,f_2)=f_0$ and $f_1 f_2 = mf_0^2$.
\end{prop}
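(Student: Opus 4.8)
The plan is to turn the statement into one about $\GL_2(K)$-equivalence of lattices in $K^2$. First I would rewrite $A_Q$ in module form. A direct computation shows that $a\tau(Q)+b=\tfrac{b+\sqrt D}{2}$ generates, together with $1$, precisely the order of discriminant $D=b^2-4ac=(mf_0)^2 d_K$, i.e.\ $\Z+(a\tau(Q)+b)\Z=\ko_{K,mf_0}$, while $\Z+\tau(Q)\Z=M_0$ has CM ring $\ko_{K,f_0}$; hence
\[A_Q=E_{\tau(Q)}\times E_{a\tau(Q)+b}\cong \C/M_0\times\C/\ko_{K,mf_0}.\]
Since all the elliptic curves involved have complex multiplication by orders in one and the same field $K$, any analytic isomorphism from $\C/M_1\times\C/M_2$ to a product $\C/N_1\times\C/N_2$ lifts to a $\C$-linear automorphism of $\C^2$ sending $M_1\oplus M_2$ onto $N_1\oplus N_2$; its columns lie in $K^2$, so it is multiplication by a matrix in $\GL_2(K)$, and conversely every such matrix carrying one lattice onto the other induces an isomorphism of the corresponding complex tori. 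So the proposition becomes: $M_1\oplus M_2\subset K^2$ lies in the $\GL_2(K)$-orbit of $M_0\oplus\ko_{K,mf_0}$ if and only if $M_1M_2\sim M_0$, $\gcd(f_1,f_2)=f_0$ and $f_1f_2=mf_0^2$.

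For the necessity direction I would produce two $\GL_2(K)$-invariants of a lattice of the form $L=N_1\oplus N_2$. Since scalar matrices are central, $\{\lambda\in K:\lambda L\subseteq L\}$ is a $\GL_2(K)$-invariant, and for a direct sum it equals $\ko_{N_1}\cap\ko_{N_2}$, the order whose conductor is the least common multiple of those of $N_1$ and $N_2$; comparing $M_1\oplus M_2$ with $M_0\oplus\ko_{K,mf_0}$ (for which this order has conductor $\lcm(f_0,mf_0)=mf_0$) forces $\lcm(f_1,f_2)=mf_0$. Next, the $\Z$-submodule of $K$ generated by all $x_1y_2-x_2y_1$ with $(x_1,x_2),(y_1,y_2)\in L$ is multiplied by $\det g$ under $g\in\GL_2(K)$, hence is $\GL_2(K)$-invariant up to homothety, and for $L=N_1\oplus N_2$ it equals $N_1N_2$; since $M_0\cdot\ko_{K,mf_0}=M_0$ this forces $M_1M_2\sim M_0$, whence $\ko_{M_1M_2}=\ko_{M_0}$, i.e.\ $\gcd(f_1,f_2)=f_0$. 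Together these give $f_1f_2=\gcd(f_1,f_2)\cdot\lcm(f_1,f_2)=f_0\cdot mf_0=mf_0^2$.

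For the sufficiency direction I would prove the Steinitz-type identity $M_1\oplus M_2\cong\ko_{K,\lcm(f_1,f_2)}\oplus M_1M_2$, valid for arbitrary modules $M_1,M_2$ in $K$ (the analogue over orders of $\ga\oplus\gb\cong\ko_K\oplus\ga\gb$). After rescaling the two coordinates of $\C^2$ — which only replaces $M_1,M_2$ by homothetic modules and changes none of the hypotheses — one may assume $M_1,M_2$ are integral, then triangularize $M_1\oplus M_2$ and clear the off-diagonal entry by column/row operations over $K$, verifying prime by prime that the resulting diagonal summands are homothetic to $\ko_{K,\lcm(f_1,f_2)}$ and to $M_1M_2$ respectively. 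Feeding in $\lcm(f_1,f_2)=mf_0$ and $M_1M_2\sim M_0$ (both of which follow from the three hypotheses) turns the right-hand side into $\ko_{K,mf_0}\oplus M_0$, so by the first paragraph $\C/M_1\times\C/M_2\cong\C/\ko_{K,mf_0}\times\C/M_0\cong A_Q$.

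The hard part is the Steinitz-type identity and the prime-by-prime bookkeeping it demands: $\ko_{K,f_1}$ and $\ko_{K,f_2}$ are in general distinct and non-maximal, so $M_1$ and $M_2$ need not be invertible over any common order and the Dedekind-domain argument for $\ga\oplus\gb\cong\ko_K\oplus\ga\gb$ does not apply verbatim; one must control not only the ideal class of the product module but the precise way the conductors of the two factors recombine under $\GL_2(K)$-moves — exactly the phenomenon (visible already in the fact that $\{f_1,f_2\}=\{1,m\}$ and any coprime factorization $f_1f_2=m$ yield the same surface) that the generalized Dirichlet composition developed later in the paper is designed to organize. As a more computational alternative for the necessity direction, one can instead compute $\rT(\C/M_1\times\C/M_2)$ from the explicit period map via the isomorphism $\rT(\C/M_1\times\C/M_2)\cong\bigl(\Hom(\C/M_1,\C/M_2),\deg\bigr)$ and read the three conditions off the discriminant, the index of primitivity, and the ideal class of the resulting norm form — at the price of carefully tracking the orientation conventions.
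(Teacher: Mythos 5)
You should note at the outset that the paper itself gives no proof of this statement: it is quoted verbatim from Shioda--Mitani (Proposition 4.5 of \cite{shioda-mitani74}), so there is no internal argument to compare against and your attempt has to stand on its own. Judged that way, the first half of your proposal is sound. The identification $A_Q\cong \C/M_0\times\C/\ko_{K,mf_0}$ is a correct computation, the reduction of the isomorphism question to $\GL_2(K)$-equivalence of $M_1\oplus M_2$ and $M_0\oplus\ko_{K,mf_0}$ is legitimate (both lattices span $\C^2$ and sit inside $K^2$), and your two invariants --- the multiplier ring $\lbrace\lambda\in K:\lambda L\subseteq L\rbrace$ and the homothety class of the determinant module --- do give $\lcm(f_1,f_2)=mf_0$ and $M_1M_2\sim M_0$; combined with the fact, recorded in the preliminaries, that $\ko_{M_1M_2}=\ko_{K,(f_1,f_2)}$, this yields all three conditions, so the ``only if'' direction is complete.

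The gap is in the ``if'' direction. The Steinitz-type identity $M_1\oplus M_2\cong\ko_{K,\lcm(f_1,f_2)}\oplus M_1M_2$ is not an available off-the-shelf lemma here: because $M_1$ and $M_2$ are proper over two different, generally non-maximal orders, this identity is essentially \emph{equivalent} to the proposition being proved (given your first paragraph, the proposition applied to $\C/M_1\times\C/M_2$ yields the identity, and conversely), so postponing it to a sketch amounts to restating the theorem as the missing lemma. Your proposed method --- triangularize $M_1\oplus M_2$ and ``clear the off-diagonal entry by column/row operations over $K$, verifying prime by prime'' --- does not say over which ring the local analysis is performed, why the extension class of $0\to M_1\to L\to M_2\to 0$ can be killed by a $\GL_2(K)$-move, or how the two conductors recombine into $\lcm(f_1,f_2)$ and $\gcd(f_1,f_2)$ in the diagonal summands; and as you yourself observe, the Dedekind-domain proof of $\ga\oplus\gb\cong\ko_K\oplus\ga\gb$ does not transfer. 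To close this you would need either a genuine local-global argument (local isomorphism at every rational prime plus control of the global obstruction by the class group of the relevant order) or Shioda--Mitani's own module computation; as written, the half of the statement that the paper actually uses to produce decompositions is asserted rather than proved.
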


\noindent
From this result, we now deduce some necessary conditions for a decomposition to be admissible. From the last two properties,
it follows that $\bar{f}_1 := f_1 /f_0$ and $\bar{f}_2 := f_2 /f_0$ are relatively prime, and hence we find a first upper bound
to the number of decompositions (absolutely not sharp, since we haven't used one of the conditions in
\cite[Proposition 4.5]{shioda-mitani74}). In fact, $f_0$ is uniquely determined by $Q$, and thus the only choice we have is
about $\bar{f}_1$ and $\bar{f}_2$, which have to satisfy $(\bar{f}_1,\bar{f}_2)=1$ and $n = \bar{f}_1 \bar{f}_2$. The number of
choices of pairs $(\bar{f_1},\bar{f_2})$ with $\gcd(\bar{f_1},\bar{f_2}) =1$ is indeed $2^{\tau(n)}$, and $f_1$ ($f_2$, respectively) determines univocally the order of conductor $f_1$ ($f_2$,
respectively), thus
\[\widetilde{\delta}(A) \leq \sum_{\substack{\bar{f}_1 \bar{f}_2 =n \\ (\bar{f}_1,\bar{f}_2)=1}} h(\ko_{K,f_1}) h(\ko_{K,f_2}) \leq 2^{\tau(n)} h(\ko_{K,n})^2.\]

\noindent
Notice that, if $Q=nQ_0$, then 
\[A_Q:=E_{\tau(Q_0)} \times E_{\tau(P)}=E_{\tau(Q_0)} \times E_{n\tau(P_0)},\]
 where $P$ is the principal form of discriminant $D=\disc Q$, and $P_0$ is the principal form of discriminant $D_0 = D/n^2$;
 also, $\rT(A) = n Q_0 = n (Q_0*P_0) $. This motivates the study of the transcendental lattice of $E_{\tau(Q_0)} \times E_{n\tau(Q_0')}$,
 where $Q_0, Q_0' \in C(D/n^2)$, which incidentally gives examples of decompositions coming from pairs $(\bar{f}_1,\bar{f}_2)=(1,n)$.
 More generally, we will be interested in abelian surfaces of the form 
 \[E_{s\tau(Q_0)} \times E_{t\tau(Q_0')},\]
where $st=n$. Under the assumption $\gcd(s,t)=1$, we are able to compute the transcendental lattice of this class of surfaces.

\begin{prop}\label{thm1}
Let $[Q]=s[Q_0],[Q']=t[Q_0']$ be such that $[Q_0],[Q_0'] \in {C}(D_0)$, for some $D_0<0$, and suppose $\gcd(s,t)=1$. Then 
\[\big[ \rT(E_{s\tau(Q_0)} \times E_{t\tau(Q_0')} \big] = st[Q_0 * Q_0'].\]
\end{prop}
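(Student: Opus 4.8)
The plan is to compute the transcendental lattice directly from the period map, following the philosophy of Shioda--Mitani, and to recognize the resulting binary form as $st(Q_0 * Q_0')$ via the arithmetic of the generalized Dirichlet composition. First I would set up explicit lattices: write $\tau_0 = \tau(Q_0)$ and $\tau_0' = \tau(Q_0')$ with the usual normalization $\tau(Q_0) = \frac{-B+\sqrt{D_0}}{2a}$, $\tau(Q_0') = \frac{-B+\sqrt{D_0}}{2a'}$ arranged so that $\gcd(a,a',\frac{b+b'}{2})=1$ (this is exactly the rewriting recalled after the definition of Dirichlet composition). Then $A := E_{s\tau_0}\times E_{t\tau_0'}$ is the complex torus $\C^2/\Lambda$ with $\Lambda = (\Z + s\tau_0\Z) \oplus (\Z + t\tau_0'\Z)$, and I take the obvious $\Z$-basis $v_1 = (1,0)$, $v_2 = (s\tau_0,0)$, $v_3 = (0,1)$, $v_4 = (0,t\tau_0')$ of $\rH_1(A,\Z)$. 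Using the formula $p_A = \sum_{i<j}\det(v_i\mid v_j)\,u^{ij}$ from the preliminaries (where $\det(v_i\mid v_j)$ is the determinant of the $2\times 2$ matrix formed from the relevant coordinates of $v_i,v_j$ — here each $v_k$ lies in one of the two factors, so the relevant determinants are built from pairs of $\C$-valued entries), I would compute the six coefficients of $p_A$ explicitly as $\Q(\tau_0,\tau_0')$-linear combinations.

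Next I would determine $\NS(A) = \ker p_A \subset \rH^2(A,\Z)$ and then $\rT(A) = \NS(A)^\perp$. The kernel computation is a linear-algebra problem over $\Z$: an integer vector $\sum_{i<j} n_{ij}u^{ij}$ lies in $\ker p_A$ iff $\sum n_{ij}\det(v_i\mid v_j) = 0$ in $\C$, which splits into a rational part and a $\sqrt{D_0}$-part since everything lives in $\Q(\sqrt{D_0})$. Solving this gives four independent Néron--Severi classes, and the orthogonal complement (with respect to the cup product, which on $\rH^2$ of an abelian surface is the standard wedge pairing $u^{ij}\cup u^{kl} = \pm u^{1234}$) is a rank-$2$ sublattice; I would exhibit two explicit generators $t_1,t_2$ for it. The Gram matrix of these generators, after the dust settles, should be an even positive-definite matrix $\begin{pmatrix} 2A & B \\ B & 2C\end{pmatrix}$, and the point of the calculation is that its associated form $(A,B,C)$ equals $st(Q_0*Q_0')$. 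This is where the hypothesis $\gcd(s,t)=1$ enters decisively: it is what allows the product lattice $(\Z+s\tau_0\Z)(\Z+t\tau_0'\Z)$ — or rather the relevant intersection/product module governing the transcendental part — to have conductor $st$ over the maximal order, and it is what makes the two congruence conditions defining the Dirichlet composition $Q_0 * Q_0'$ simultaneously solvable and compatible with the factor of $st$.

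The main obstacle I anticipate is the bookkeeping in identifying the output Gram matrix with $st(Q_0*Q_0')$ rather than merely with some form in the right $\GL_2(\Z)$-class of discriminant $(st)^2 D_0$; one must pin down the middle coefficient $B$ modulo $2st\,a a'$ and match it against the three simultaneous congruences $B\equiv b\ (2a)$, $B\equiv b'\ (2a')$, $B^2\equiv D_0\ (4aa')$ that characterize the Dirichlet composition. To handle this cleanly I would first treat the key case $Q_0 = Q_0' = P_0$ (the principal form), where $E_{s\tau_0}\times E_{t\tau_0'} = E_{s\tau(P_0)}\times E_{t\tau(P_0)}$ and the transcendental lattice computation reduces to a generalization of the Shioda--Mitani computation, and then reduce the general case to this one: since $[Q] = s[Q_0]$ and $[Q'] = t[Q_0']$ determine the same abelian surface as a suitable twist, one can move $Q_0, Q_0'$ around in their respective ideal classes (replacing a module by a homothetic one does not change the isomorphism class of the elliptic curve) to arrange a convenient normal form before reading off the Gram matrix. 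Alternatively — and this may be the shorter route — one invokes Proposition \ref{thm_shioda-mitani} to check that $(E_{s\tau(Q_0)}, E_{t\tau(Q_0')})$ is an admissible decomposition of the abelian surface whose transcendental lattice is $st(Q_0*Q_0')$, by verifying the three conditions $M_1 M_2 \sim M_0$, $(f_1,f_2) = f_0$, $f_1 f_2 = mf_0^2$ with $f_1 = s f_0$, $f_2 = t f_0$, $m = st$; the coprimality $\gcd(s,t)=1$ is precisely what guarantees $(f_1,f_2)=f_0$, and the relation $M_1 M_2 \sim M_0$ translates, under the dictionary between modules and forms, into the statement that the ideal class of $M_0$ is the Dirichlet composite of those of $M_1$ and $M_2$ — which is exactly $st[Q_0*Q_0']$ once one accounts for the primitive parts. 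Either way, the heart of the matter is the compatibility of ideal multiplication with Dirichlet composition in the presence of the conductor factor $st$.
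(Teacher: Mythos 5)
Your main line of argument is essentially the paper's own proof: after normalizing both $\tau$'s with the common integer $B$ from the Dirichlet composition (which is exactly how the paper dispenses with the ``bookkeeping'' worry), one writes down the Shioda--Mitani period $p_A$, solves for $\NS(A)=\ker p_A$ using the rational and $\sqrt{D_0}$ parts, takes the orthogonal complement under coprimality normalizations such as $\gcd(a_0,a_0')=\gcd(a_0,s)=1$, and reads off the Gram matrix $st\bigl(\begin{smallmatrix} 2a_0a_0' & B \\ B & \ast\end{smallmatrix}\bigr)$, which is $st(Q_0*Q_0')$ by construction of $B$. The alternative shortcut you sketch via Proposition~\ref{thm_shioda-mitani} is a genuinely different (and plausible) route, but as stated it still requires proving the module identity $M_1M_2\sim M_0$, which amounts to the same ideal-theoretic normalization work, so the direct computation is the cleaner path and is the one the paper takes.
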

\begin{proof}
Let
\[\tau_1 := s \dfrac{-b_0+\sqrt{D_0}}{2a_0}, \qquad \tau_2 := t \dfrac{-b_0'+\sqrt{D_0}}{2a_0'}.\]
Now, let $B_0$ be the element described in \cite[Lemma 3.2]{cox13}; by virtue of these relations (together with $\SL_2(\Z)$-invariance
of the $j$-invariant), we can replace $b_0$ and $b_0'$ by $B_0$ without changing the isomorphism classes of the elliptic curves.
Therefore, we can assume that 
\[\tau_1=s \dfrac{-B_0+\sqrt{D_0}}{2a_0}, \qquad \tau_2=s \dfrac{-B_0+\sqrt{D_0}}{2a_0'}.\]

\noindent
By \cite{shioda-mitani74}, 
\[p_A = u^{12}+\tau_2 u^{14}+\tau_1 u^{23} - \tau_1 \tau_2 u^{34},\]
and $\NS(A) = \ker (p_A)$. By picking an element
\[v = \sum_{1 \leq i < j \leq 4} A_{ij} u^{ij} \in \NS(A)_\Q,\]
and looking at its image under the period map, we see that
\begin{align*}
0 = p_A (v) = &\Big[A_{34} - \dfrac{tB_0}{2a_0'}A_{23}-\dfrac{sB_0}{2a_0}A_{14} - st\dfrac{D_0 + B_0^2}{4a_0a_0'}A_{12} \Big] +\\
&+ \sqrt{D_0} \Big[ \dfrac{t}{2a_0'}A_{23} + \dfrac{s}{2a_0}A_{14} + st \dfrac{B_0}{2a_0a_0'}A_{12}\Big].
\end{align*}
Solving the system of equations given by the pair of brackets, we get
\[A_{23} = -\dfrac{s}{a_0t}(a_0'A_{14}+tB_0A_{12}), \qquad A_{34} = st\dfrac{D_0 -B_0^2}{4a_0a_0'}A_{12},\]
which in turn give an explicit description of
\[\NS(A)_\Q = \Q \Big\langle u^{12} - \dfrac{sB_0}{a_0}u^{23} + std_0 u^{34} , u^{13} , u^{24} , u^{14} - \dfrac{sa_0'}{ta_0}u^{23}\Big\rangle,\]
with $d_0 := \dfrac{D_0 -B_0^2}{4a_0a_0'}$. 

\noindent
Now we want to compute $\rT(A) = \NS(A)^\perp$: let 
\[v=\sum_{1 \leq i < j \leq 4} A_{ij} u^{ij}  \in \NS(A)^\perp = \NS(A)_\Q^\perp,\]
and consider the relations coming by intersecting the generators of $\NS(A)_\Q$ with $v$.
\begin{align}
&A_{24} =0 \label{1}\\
&A_{13} =0 \label{2}\\
&A_{34} - \dfrac{sB_0}{a_0}A_{14} + std_0 A_{12} =0 \label{3}\\
&A_{23} -\dfrac{sa_0'}{ta_0}A_{14} =0 \label{4}
\end{align}
Equations (\ref{1}) and (\ref{2}) give clear conditions on the coefficients of $v$. Turning to (\ref{3}) and (\ref{4}), we can assume
that $(a_0,a_0')=(a_0,s)=1$ by \cite[Lemmata 2.3 and 2.25]{cox13}; furthermore, we can assume $t \nmid a_0'$. Under these hypotheses,
equation (\ref{4}) yields 
\[A_{14} = a_0 A_{14}' = a_0 t A_{14}'', \qquad A_{23} = sa_0'A_{14}'',\]  
and therefore we get
\[A_{34} = stB_0 A_{14}''-st d_0 A_{12}\]
from equation (\ref{3}). This leads to two generators for $\rT(A)$, namely
\[\rT(A) = \Z \Big\langle a_0t u^{14} + s a_0'u^{23} + st B_0 u^{34}, u^{12} - std_0 u^{34} \Big\rangle.\]
The intersection matrix of $\rT(A)$ is shown to be $st\begin{pmatrix} 2a_0a_0' & B_0 \\ B_0 & -2d_0 \end{pmatrix}$, and one can easily
check that the matrix is (positively) oriented.
\end{proof}

\noindent
If $A$ is a singular abelian surface of transcendental lattice $Q=nQ_0$, we get $2^{\tau(n)} h(D/n^2)$ decompositions of $A$, $D=-\det \rT(A)$. 

\begin{cor}\label{cor1}
If $\rT(A)=Q=nQ_0$, then we get decompositions of $A$ as $E_{s\tau(Q_0 * R_0)} \times E_{t \tau(R_0^{-1} * Q_0)}$, for
$R_0 \in C(\disc Q_0)$, $n=st$, $(s,t)=1$.
\end{cor}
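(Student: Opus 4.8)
The plan is to obtain this as a direct consequence of Proposition \ref{thm1} together with the Shioda--Mitani correspondence $\Sigma^{\rm Ab}\longlra\kq^+/\SL_2(\Z)$. Put $D_0:=\disc Q_0$, so that $Q=nQ_0$ has discriminant $n^2D_0$; by the correspondence $A\cong A_Q$ and the positively oriented transcendental lattice of $A$ is the form $nQ_0=Q$. Fix a class $R_0\in C(D_0)$ and a factorization $n=st$ with $\gcd(s,t)=1$. Since $C(D_0)$ is a group under Dirichlet composition, $Q_0*R_0$ and $R_0^{-1}*Q_0$ may be taken to be primitive forms of discriminant $D_0$ representing the classes $[Q_0][R_0]$ and $[Q_0][R_0]^{-1}$, so they are legitimate inputs for Proposition \ref{thm1}.

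First I would feed the pair $(Q_0*R_0,\;R_0^{-1}*Q_0)$ and the coprime integers $s,t$ into Proposition \ref{thm1} — which is exactly its hypothesis — obtaining
$$\Big[\rT\big(E_{s\tau(Q_0*R_0)}\times E_{t\tau(R_0^{-1}*Q_0)}\big)\Big]=st\cdot\big[(Q_0*R_0)*(R_0^{-1}*Q_0)\big]$$
as an oriented binary form. I would then simplify the right-hand side inside the class group $C(D_0)$: the contributions of $R_0$ and $R_0^{-1}$ cancel, so $(Q_0*R_0)*(R_0^{-1}*Q_0)$ is $\SL_2(\Z)$-equivalent to $Q_0$, and since multiplication by the positive integer $st=n$ respects the $\SL_2(\Z)$-action on forms, the whole expression equals the oriented form $nQ_0=Q$. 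The one thing that needs care here is that the orientation produced by Proposition \ref{thm1} is the positive one, which is precisely the assertion closing its proof and matches the convention that $\rT(A)$ is positively oriented.

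Finally, $A$ and $E_{s\tau(Q_0*R_0)}\times E_{t\tau(R_0^{-1}*Q_0)}$ are singular abelian surfaces to which the period map attaches the same $\SL_2(\Z)$-class of positive definite binary forms; hence, by injectivity of $\Sigma^{\rm Ab}\longlra\kq^+/\SL_2(\Z)$, they are isomorphic, which is the claim. I do not expect a genuine obstacle: the substantive computation is Proposition \ref{thm1}, and this corollary is essentially bookkeeping, the only delicate steps being the cancellation in $C(D_0)$ and keeping the orientations straight. A useful consistency check is that for $R_0$ the principal class and $(s,t)=(1,n)$ the construction reproduces (a surface isomorphic to) the standard decomposition $A_Q=E_{\tau(Q_0)}\times E_{n\tau(P_0)}$ recalled just before the corollary.
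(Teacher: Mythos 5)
Your overall strategy (feed the two classes into Proposition \ref{thm1}, then use the Shioda--Mitani bijection $\Sigma^{\rm Ab}\lra \kq^+/\SL_2(\Z)$ and the orientation remark to conclude $A\cong$ the product) is exactly how the corollary is meant to follow, but the crucial simplification step is wrong. In the class group $C(\disc Q_0)$ one has
\[
[(Q_0*R_0)*(R_0^{-1}*Q_0)]=[Q_0][R_0][R_0]^{-1}[Q_0]=[Q_0]^2,
\]
not $[Q_0]$: the $R_0$'s cancel, but two copies of $Q_0$ remain. So what Proposition \ref{thm1} actually gives for the surface $E_{s\tau(Q_0*R_0)}\times E_{t\tau(R_0^{-1}*Q_0)}$ is transcendental lattice in the class $n[Q_0]^2$, which coincides with $nQ_0$ only when $[Q_0]$ is the principal class. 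Your own consistency check exposes this: taking $R_0=P_0$ and $(s,t)=(1,n)$, the recipe produces $E_{\tau(Q_0)}\times E_{n\tau(Q_0)}$, not the standard decomposition $E_{\tau(Q_0)}\times E_{n\tau(P_0)}$ recalled before the corollary; these are different surfaces whenever $[Q_0]\neq[P_0]$. Asserting the cancellation to force the answer is a genuine gap, not bookkeeping.

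What is going on is that the statement as printed carries a slip that you were in a position to detect: the two classes must compose to $[Q_0]$, so the pair should be, e.g., $\big([Q_0]*[R_0],\,[R_0]^{-1}\big)$ (equivalently $\big([R_0],\,[R_0]^{-1}*[Q_0]\big)$), i.e.\ the second factor should be $E_{t\tau(R_0^{-1})}$ rather than $E_{t\tau(R_0^{-1}*Q_0)}$. This is consistent with the count of $2^{\tau(n)}h(D/n^2)$ decompositions stated right after the corollary (the pairs of classes with product $[Q_0]$ are parametrized by $R_0\in C(D_0)$) and with Theorem \ref{classthm}(1), where the factors are $[Q_0]\circledast[R]$ and $[P]\circledast[R]^{-1}$. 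With that correction your argument goes through verbatim and is the intended (implicit) proof: Proposition \ref{thm1} yields $st[(Q_0*R_0)*R_0^{-1}]=st[Q_0]=n[Q_0]$ as a positively oriented form, and the bijection $\Sigma^{\rm Ab}\lra\kq^+/\SL_2(\Z)$ identifies the product with $A$. As written, however, your proof establishes a different statement, and a blind write-up should either have proved the corrected claim or flagged the inconsistency rather than bridging it with a false identity in $C(D_0)$.
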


\begin{ex}\label{ex1}
Let $A$ have transcendental lattice $6(1,0,3) \in \overline{C}(-432)$. Proposition \ref{thm_shioda-mitani} tell us that, in order
to find all decompositions of $A$, we must inspect the folowing orders.

\[\xymatrix{
K & &\ko_{K,4} \ar@{^{(}->}[ld] & \\
\ko_K \ar@{^{(}->}[u] & \ko_{K,2} \ar@{^{(}->}[l] & & \ko_{K,12} \ar@{^{(}->}[lu] \ar@{^{(}->}[ld] \\
 & &\ko_{K,6} \ar@{^{(}->}[lu] &
}
\]

\noindent
According to Corollary \ref{cor_ma}, the number of strict decompositions is $\widetilde{\delta}(A) =24$, while Corollary \ref{cor1}
allows us to retrieve only $4$ of those. This happens because we only considered elliptic curves with CM by $\ko_{K,2}$ and $\ko_{K,12}$.
\qed
\end{ex}

\noindent
The example shows that, in order to being able to exhibit all decompositions, we need to compute the transcendental lattice of the
product of elliptic curves with CM by $\ko_{K,4}$ and $\ko_{K,3}$. The proof of Theorem \ref{thm1} suggests that this would be
possible if we were able to compose forms from different class groups.

\section{Explicit computation of transcendental lattices}\label{transcendental}

\subsection{Composition between different class groups}
The idea behind Dirichlet composition is that two forms $f(x,y)$ and $g(x,y)$ (having the same discriminant $D$) give rise to a
new form $F(x,y)$ (again of discriminant $D$) with the property
\[f(x,y) \cdot g(x,y) = F(B_1(x,y,z,w),B_2(x,y,z,w)),\]
for $B_i(x,y,z,w)\in \Z[xz,xw,yz,yw]$. In particular, the product of numbers represented
by $f(x,y)$ and $g(x,y)$ are represented by $F(x,y)$.\newline

\noindent
If $f(x,y)$ and $g(x,y)$ are not of the same discriminant, we can multiply them by a positive integer to obtain two new forms
having the same discriminant. Namely, given $Q_0 \in C(D_0)$ and $Q_0' \in C(D_0')$, with $D_0 = f_0^2 d_K$ and $D_0' = f_0'^2 d_K$,
set $f:=\text{lcm}(f_0,f_0')$. Then, putting
\[D:=f^2d_K, \qquad d:=f/f_0, \qquad d':=f/f_0',\]
the forms $Q:=d Q_0$ and $Q':=d'Q_0'$ have discriminant $D$; we get two classes in the \textit{extended class group}
\footnote{The name is actually misleading since this is not a group.}
\[\overline{C}(D):= \bigsqcup_{m \vert f(D)} m \cdot C(D/m^2),\]
namely $[Q]:=d[Q_0]$ and $[Q']:=d'[Q_0']$, and thus we can use Dirichlet composition after considering a suitable representative.
Since $d$ and $d'$ are coprime, $Q=(a,b,c)$ and $Q'=(a',b',c')$ can be assumed to have coprime leading coefficients, hence we do have
a composition: it is defined as usual (see \cite[Theorem 3.8]{cox13}), and it extends to elements of the (extended) class group. 

\begin{lemma}
Assume that $Q=(a,b,c)$ and $Q'=(a',b',c')$ are primitive, and suppose that 
\[n^2 \disc Q = m^2 \disc Q', \qquad \gcd(n,m)=1.\] 
Then, the form $(nQ)*(mQ')$ has primitivity index $nm$ (if the composition exists).
\end{lemma}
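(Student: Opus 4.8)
The plan is to compute the composition $(nQ)*(mQ')$ explicitly and read off the $\gcd$ of its coefficients. First a normalisation: arguing exactly as in the proof of Proposition~\ref{thm1} (using \cite[Lemmata~2.3 and 2.25]{cox13}), we may replace $Q$ and $Q'$ by properly equivalent primitive forms so that the scaled forms $nQ=(na,nb,nc)$ and $mQ'=(ma',mb',mc')$ have coprime leading coefficients, $\gcd(na,ma')=1$; this changes neither the composition class nor the primitivity index, and it forces $\gcd(m,a)=\gcd(n,a')=\gcd(a,a')=1$ (while $\gcd(n,m)=1$ is the hypothesis). Both scaled forms have the common discriminant $D:=n^2\disc Q=m^2\disc Q'$; since $\gcd(n,m)=1$ we have $m^2\mid\disc Q$ and $n^2\mid\disc Q'$, so we may write $\disc Q=m^2 e$, $\disc Q'=n^2 e$ and $D=(nm)^2 e$ for a single integer $e$. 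By \cite[Theorem~3.8]{cox13} the composition is then $(nQ)*(mQ')=(nm\,aa',B,C)$, where $B$ is the integer (modulo $2nm\,aa'$) with $B\equiv nb\pmod{2na}$ and $B\equiv mb'\pmod{2ma'}$, and $C=(B^2-D)/(4nm\,aa')$.

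The next step is to show that $nm$ divides each coefficient. The congruences defining $B$ give $n\mid B$ and $m\mid B$, hence $nm\mid B$; write $B=nm\beta$, so that $m\beta\equiv b\pmod{2a}$ and $n\beta\equiv b'\pmod{2a'}$. Squaring the first (legitimate modulo $4a$ because $m\beta+b$ is even) and using $b^2\equiv b^2-4ac=\disc Q=m^2 e\pmod{4a}$ gives $4a\mid m^2(\beta^2-e)$, and symmetrically $4a'\mid n^2(\beta^2-e)$. I then claim $4aa'\mid\beta^2-e$: for odd primes this is immediate from $\gcd(m,a)=\gcd(n,a')=\gcd(a,a')=1$, and for the prime $2$ one uses that $n,m$ are not both even, so after possibly exchanging the two forms $n$ is odd, whence $\gcd(n^2,4a')=1$ and already $4a'\mid\beta^2-e$, while if $2\mid a$ then $m$ is odd and $4a\mid m^2(\beta^2-e)$ supplies the missing $2$-power of $a$ (and $\gcd(a,a')=1$ makes $\operatorname{lcm}(4a,4a')=4aa'$). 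Writing $\beta^2-e=4aa'\gamma$ and substituting $B=nm\beta$ into the discriminant identity $B^2-4nm\,aa'\,C=(nm)^2 e$ yields $C=nm\gamma$. Hence $(nQ)*(mQ')=nm\cdot(aa',\beta,\gamma)$, a form of discriminant $e$, and its primitivity index equals $nm\cdot\gcd(aa',\beta,\gamma)$; it remains to show that $G:=(aa',\beta,\gamma)$ is primitive.

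For this I would invoke the bilinear identity underlying Dirichlet composition: there exist integral bilinear forms $\varphi,\psi$ in $x_1,x_2,y_1,y_2$ with $(nQ)(x_1,x_2)\cdot(mQ')(y_1,y_2)=F(\varphi,\psi)$, where $F=(nQ)*(mQ')$. Since $Q$ is primitive, by \cite[Lemma~2.25]{cox13} it represents an integer $r$ with $\gcd(r,aa')=1$, and likewise $Q'$ represents some $r'$ with $\gcd(r',aa')=1$; then $F$ represents $nr\cdot mr'=nm\,rr'$, and dividing by $nm$ shows $G$ represents $rr'$. But $\gcd(aa',\beta,\gamma)$ divides every value of $G$, in particular $rr'$, while it also divides $aa'$, which is coprime to $rr'$; therefore $\gcd(aa',\beta,\gamma)=1$, and $(nQ)*(mQ')$ has primitivity index exactly $nm$, as claimed.

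The main obstacle is the $2$-adic bookkeeping in the claim $4aa'\mid\beta^2-e$ (equivalently $nm\mid C$): since $\gcd(nm,4)$ need not be $1$, one cannot simply cancel $nm$ from the relation $4aa'\,C=nm(\beta^2-e)$, and one is forced into the small case analysis at the prime $2$, exploiting that at most one of $n,m$ is even together with $\gcd(m,a)=\gcd(n,a')=\gcd(a,a')=1$. By contrast, once the factor $nm$ has been extracted, the primitivity of the quotient form drops out cleanly from the multiplicativity of composition and \cite[Lemma~2.25]{cox13}.
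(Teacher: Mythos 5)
Your proof is correct and follows essentially the approach the paper itself indicates, namely repeating Dirichlet's construction in this scaled setting: you extract the factor $nm$ from the middle coefficient via the defining congruences, settle the prime $2$ to get $nm \mid C$, and then use the composition identity together with the fact that a primitive form represents integers coprime to any given modulus to conclude that the residual form $(aa',\beta,\gamma)$ is primitive. The only implicit point is that your preliminary normalization $\gcd(na,ma')=1$ relies on the composition class and the primitivity index being independent of the chosen representatives, which the paper asserts in the paragraph preceding the lemma, so your argument amounts to a fully detailed version of the paper's one-line proof.
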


\noindent
This result follows from repeating the construction of the composition in this more general setup; the interested reader will find
a detailed account in \cite[Ch.~1, Sect.~3]{cox13}. In particular, going back to the case of $Q=dQ_0$ and $Q'=d'Q_0'$, we see that
$Q*Q'$ has primitivity index $dd'$. Also, 
\[D=\disc (Q*Q') = (dd')^2 \gcd(f_0,f_0')^2 d_K,\]
and therefore the primitive part of $Q*Q'$ comes from the order of conductor $\gcd(f_0,f_0')$.\\

\noindent
Under the 1:1 correspondence between the form class group $C(D)$ and the
ideal class group $C(\ko)$ (where $\ko$ is the unique order of discriminant $D$), we see that the form $(1,1,1) \in C(-3)$
corresponds to the ideal $[1,\frac{-1+\sqrt{-3}}{2}] \in C(\ko_K)$ ($K=\Q(\sqrt{-3})$). But also the form $(3,3,3)$ is sent
to the same ideal, and therefore we can freely lift a form to larger discriminant without changing the ideal class. This
suggests that the extended class group should be redefined as
\[\overline{C}(D):= \bigsqcup_{m \vert f(D)} C(D/m^2), \qquad \text{(drop $m$ in all factors)}\]
and consequently we define the \textit{extended ideal group}\footnote{Again, this is not a group!} as
\[\overline{C}(\ko):= \bigsqcup_{\ko \subseteq \ko' \subseteq \ko_K} C(\ko').\]
The bijection $C(D) \lra C(\ko)$ yields an analogous bijection $\overline{C}(D) \lra \overline{C}(\ko)$; this allows us to
work with ideal classes rather than forms.\newline

\noindent
Fix a quadratic imaginary field $K$, and let $\ko_1$ (resp. $\ko_2$) be the order of discriminant $D_1$ (resp. $D_2$) in
$\ko_K$; let $f_1$ (resp. $f_2$) be its conductor and set
\[f_0:=\gcd(f_1,f_2), \qquad \bar{f}_1:=f_1/f_0, \qquad \bar{f}_2:=f_2/f_0.\]
Composing forms of discriminants $D_1$ and $D_2$ gives forms of discriminant $D:=\lcm(D_1,D_2)$ ($f:=f(D)$), having
index of primitivity $d_1 d_2$, where $d_1 :=f/f_1$ and $d_2:=f/f_2$. Dropping the index we get a composition
\[C(D_1) \times C(D_2) \xrightarrow{\circledast} C(D_0),\]
where $D_0:=f_0^2 d_K$. More concretely, given $Q_1 \in C(D_1)$ and $Q_2 \in C(D_2)$, $Q_1 \circledast Q_2$ is the form in
$C(D_0)$ with the property that
\[d_1 d_2 [Q_1 \circledast Q_2] = [d_1 Q_1] * [d_2 Q_2].\]
On the level of ideal classes, we get
the usual multiplication between ideals
\[C(\ko_{K,f_1}) \times C(\ko_{K,f_2}) \xrightarrow{} C(\ko_{K,f_0}).\]
We now establish some elementary properties of $\circledast$ (and of $*$ in its original sense).
\begin{prop}
Let $Q_i \in C(D_i)$ ($i=0,1,2$), $R \in C(D)$, and let $P$ be the principal form of discriminant $D$. The composition $\circledast$ satisfies:
\begin{enumerate}
\item[($i$)] $[Q_0] \circledast [P] = [Q_0]$;
\item[($ii$)] $([Q_0]\circledast [R])\circledast [R]^{-1} = [Q_0]$; 
\item[($iii$)] $([Q_1] \circledast [R]) \circledast ([R]^{-1} \circledast [Q_2])= [Q_1] \circledast [Q_2]$;
\end{enumerate}
\end{prop}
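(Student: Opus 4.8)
The plan is to pass from binary forms to fractional ideals and reduce the three identities to the elementary arithmetic of ideal multiplication in the extended ideal group. Write $f$ for the conductor of the order attached to $D$; in order for the compositions in the statement to be defined and to land in the indicated class groups, all classes in play lie in $\overline{C}(\ko_{K,f})$, so each $Q_i$ has conductor $f_i$ with $f_i \mid f$ and $R$ has conductor exactly $f$. Via the bijection $\overline{C}(D) \leftrightarrow \overline{C}(\ko)$, which intertwines $\circledast$ with the usual multiplication of proper fractional ideals, fix representatives: a proper fractional $\ko_{K,f_i}$-ideal $\ga_i$ for $[Q_i]$ and a proper fractional $\ko_{K,f}$-ideal $\gr$ for $R$. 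Since proper fractional ideals are invertible, choose the representative $\gr^{-1}$ of $[R]^{-1}$ with $\gr\gr^{-1} = \ko_{K,f}$, and recall that under the same bijection the principal form $P$ of discriminant $D$ corresponds to the trivial class, represented by $\ko_{K,f}$ itself.

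The only computation required is a one-line fact: if $\gb$ is a proper fractional $\ko'$-ideal and $\ko'' \subseteq \ko'$ is a suborder, then $\gb\,\ko'' = \gb$. (From $1 \in \ko''$ we get $\gb \subseteq \gb\,\ko''$, and since $\gb$ is an $\ko'$-module with $\ko'' \subseteq \ko'$ we get $\gb\,\ko'' \subseteq \gb\,\ko' = \gb$.) Taking $\ko' = \ko_{K,f_0}$ and $\ko'' = \ko_{K,f}$ — legitimate because $f_0 \mid f$ — yields $\ga_0\,\ko_{K,f} = \ga_0$; translating back gives $(i)$, since $[Q_0]\circledast[P]$ corresponds to $[\ga_0\,\ko_{K,f}] = [\ga_0]$.

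For $(ii)$ and $(iii)$ I combine this fact with associativity and commutativity of ideal multiplication and the normalization $\gr\gr^{-1} = \ko_{K,f}$. For $(ii)$: $([Q_0]\circledast[R])\circledast[R]^{-1}$ corresponds to $(\ga_0\gr)\gr^{-1} = \ga_0(\gr\gr^{-1}) = \ga_0\,\ko_{K,f} = \ga_0$. For $(iii)$: $([Q_1]\circledast[R])\circledast([R]^{-1}\circledast[Q_2])$ corresponds to $(\ga_1\gr)(\gr^{-1}\ga_2) = (\ga_1\ga_2)(\gr\gr^{-1}) = (\ga_1\ga_2)\,\ko_{K,f}$; the CM ring of $\ga_1\ga_2$ is $\ko_{K,\gcd(f_1,f_2)}$ and $\gcd(f_1,f_2) \mid f$, so the same fact gives $(\ga_1\ga_2)\,\ko_{K,f} = \ga_1\ga_2$, which represents $[Q_1]\circledast[Q_2]$.

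I do not anticipate a real obstacle: once the form/ideal dictionary is invoked, all three identities are formal consequences of the ring structure of $\ko_K$. The single point deserving care — and the one I would verify explicitly — is the conductor bookkeeping: one must check that every intermediate product ($\ga_0\gr$, $\ga_1\gr$, $\gr^{-1}\ga_2$, $\ga_1\ga_2$) has the CM ring claimed, so that the suborder fact applies at each stage and both sides of each identity genuinely live in the same class group ($C(\ko_{K,f_0})$ for $(i)$–$(ii)$, and $C(\ko_{K,\gcd(f_1,f_2)})$ for $(iii)$). This is precisely where the divisibilities $f_i \mid f$ are used.
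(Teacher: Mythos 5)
Your proposal is correct and follows essentially the same route as the paper, which simply says the statement ``follows from the corresponding properties for fractional ideals'' via the form/ideal class correspondence. You merely make explicit what the paper leaves implicit: the absorption identity $\gb\,\ko_{K,f}=\gb$ for a module $\gb$ whose CM ring contains $\ko_{K,f}$, the normalization $\gr\gr^{-1}=\ko_{K,f}$, and the conductor bookkeeping ensuring each product lands in the stated class group.
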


\begin{proof}
Making use of the isomorphism between form class group and ideal class group, the proof follows easily from the corresponding properties for fractional ideals.
\end{proof}

\subsection{Explicit computation of transcendental lattices}
We now exhibit a formula for the transcendental lattice of a singular abelian surface, which is a product of two elliptic curves
$E_1 \in \ke ll (\ko_{K,f_1})$ and $E_2 \in \ke ll (\ko_{K,f_2})$. 

\begin{prop}\label{thm2}
Let $D_0= f_0^2 d_K$ and $D_0'= (f_0')^2 d_K$, where $d_K$ is the fundamental discriminant of a quadratic imaginary field $K$. Let $Q_0=(a_0,b_0,c_0) \in C(D_0)$ and $Q_0'=(a_0',b_0',c_0') \in C(D_0')$; if
\[f:=\lcm(f_0,f_0'), \quad d:=f/f_0, \quad d':=f/f_0',\]
consider the forms $Q:=dQ_0$ and $Q':=d'Q_0'$ of discriminant $D:=f^2d_K$. Set 
\[\tau := \tau(Q_0)=\frac{-b+\sqrt{D}}{2a}, \qquad \tau' := \tau(Q_0')=\frac{-b'+\sqrt{D}}{2a'},\]
where $a=da_0$, $b=db_0$ and $c=dc_0$, and let $E:=E_{\tau} \in \ke ll(\ko_{K,f_0})$ and $E':=E_{\tau '} \in \ke ll (\ko_{K,f_0'})$. Then 
\[[\rT(E \times E')] = [Q]*[Q'] = dd' [Q_0 \circledast Q_0'],\]
where $\circledast$ is the generalized Dirichlet composition.
\end{prop}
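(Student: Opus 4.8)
The plan is to reduce the claim to the explicit period computation already carried out for Proposition \ref{thm1}. The starting remark is that $\tau(Q_0)$ and $\tau(Q)$ are the very same point of the upper half plane: from $Q = dQ_0 = (da_0, db_0, dc_0)$ one gets $\disc Q = d^2 D_0 = f^2 d_K = D$ and
\[\tau(Q) \ = \ \frac{-db_0 + \sqrt{d^2 D_0}}{2da_0} \ = \ \frac{-b_0 + \sqrt{D_0}}{2a_0} \ = \ \tau(Q_0) \ = \ \tau ,\]
and similarly $\tau(Q') = \tau(Q_0') = \tau'$ for $Q' = d'Q_0'$. Hence $E \times E' = E_{\tau(Q)} \times E_{\tau(Q')}$ is the product of the two elliptic curves attached to the forms $Q$ and $Q'$, which now share the discriminant $D$; the only novelty with respect to Proposition \ref{thm1} is that $Q$ and $Q'$ are no longer primitive, their primitivity indices being $d$ and $d'$.

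Before computing I would normalize so that Dirichlet composition is available. Since $f = \lcm(f_0, f_0')$ one has $\gcd(d, d') = 1$. Using Lemmata 2.3 and 2.25 of \cite{cox13}, replace $Q_0$ and $Q_0'$ by properly equivalent forms --- which alters neither the classes $[Q_0] \in C(D_0)$, $[Q_0'] \in C(D_0')$ nor the isomorphism classes of $E$ and $E'$ --- so that $a_0$ is prime to $a_0' d'$ and $a_0'$ is prime to $d$; together with $\gcd(d, d') = 1$ this forces $\gcd(a, a') = 1$ for $a = da_0$, $a' = d'a_0'$. Consequently $Q * Q' = (aa', B, C)$ is defined, with $C = (B^2 - D)/4aa' \in \Z$ and $B$ the integer, unique modulo $2aa'$, determined by $B \equiv db_0 \pmod{2a}$, $B \equiv d'b_0' \pmod{2a'}$, $B^2 \equiv D \pmod{4aa'}$. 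As in Lemma 3.2 of \cite{cox13}, replacing the middle coefficients of $Q$ and $Q'$ by $B$ amounts to the $\SL_2(\Z)$-translation $\tau \mapsto \tau + k$ on each factor and so leaves $E, E'$ unchanged; we may therefore assume $\tau = (-B + \sqrt D)/2a$ and $\tau' = (-B + \sqrt D)/2a'$.

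Now I would repeat the computation in the proof of Proposition \ref{thm1} with $s = t = 1$ and with $(a, a', B, D)$ in place of $(a_0, a_0', B_0, D_0)$: imprimitivity of $Q$ and $Q'$ plays no role, since the only arithmetic inputs there are $\gcd(a, a') = 1$ and $4aa' \mid B^2 - D$. Writing $A = E \times E'$, the period map reads $p_A = u^{12} + \tau' u^{14} + \tau u^{23} - \tau\tau' u^{34}$; solving $p_A(v) = 0$ for $v = \sum_{i<j} A_{ij} u^{ij}$ describes $\NS(A)_\Q = \ker p_A$, and intersecting $v$ against its generators yields
\[\rT(A) \ = \ \Z\big\langle\, a\, u^{14} + a'\, u^{23} + B\, u^{34}, \ \ u^{12} + C\, u^{34} \,\big\rangle ,\]
whose Gram matrix $\begin{pmatrix} 2aa' & B \\ B & 2C \end{pmatrix}$ is positively oriented and corresponds to the quadratic form $(aa', B, C) = Q * Q'$. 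Thus $[\rT(E \times E')] = [Q] * [Q']$. Finally, by the lemma on the primitivity index of a composition $(nQ)*(mQ')$, applied with $n = d$, $m = d'$ (note $d^2 \disc Q_0 = D = (d')^2 \disc Q_0'$, $\gcd(d, d') = 1$, and the composition exists because $\gcd(a, a') = 1$), the form $Q * Q' = (dQ_0)*(d'Q_0')$ has index of primitivity $dd'$; by the definition of the generalized Dirichlet composition this is exactly the assertion $[Q] * [Q'] = dd'[Q_0 \circledast Q_0']$, so the proof is complete.

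The computation itself is routine bookkeeping; the only delicate point is the normalization, where one must secure $\gcd(a, a') = 1$ while keeping $[Q] = d[Q_0]$ and $[Q'] = d'[Q_0']$ intact (so that the final identification with $\circledast$ still applies), and where one uses that $\rT(A)$ is defined only up to $\SL_2(\Z)$ --- exactly the slack that lets us pass to the common middle coefficient $B$. The positive orientation is verified as in Proposition \ref{thm1}.
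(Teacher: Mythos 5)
Your proposal is correct and follows essentially the same route as the paper: reduce to the period computation of Proposition \ref{thm1} (with $s=t=1$), use the Dirichlet integer $B$ as common middle coefficient after normalizing representatives via Lemmata 2.3 and 2.25 of \cite{cox13}, read off $\rT(A)=\begin{pmatrix} 2aa' & B \\ B & 2C\end{pmatrix}$, and invoke the primitivity-index lemma together with the definition of $\circledast$. The only cosmetic difference is that you arrange $\gcd(a,a')=1$ up front, whereas the paper imposes the coprimality assumptions ($(d,a_0')=1$, $(a_0,a')=1$) in the middle of the lattice computation; the content is identical.
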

\begin{proof}
The proof is similar to the one of Theorem \ref{thm1}; we will just give an outline. By using Dirichlet composition, we can assume that
\[\tau := \frac{-B+\sqrt{D}}{2a}, \qquad \tau' := \frac{-B+\sqrt{D}}{2a'},\]
where $B$ is the integer coming into play because of the Dirichlet composition. Computations which are analogous to the ones above yield
\[\NS(A)_\Q = \Q \Big\langle u^{12}-\frac{B}{a}u^{23} + \frac{D-B^2}{4aa'}u^{34}, u^{14}-\frac{a'}{a}u^{23},u^{13},u^{24} \Big\rangle.\]
The transcendental lattice $\rT(A)$ is given by the conditions
\begin{align}
&A_{24}= A_{13}=0, \label{5}\\
&A_{34} - \dfrac{B}{a}A_{14} +  \frac{D-B^2}{4aa'} A_{12} =0, \label{6}\\
&A_{23} -\dfrac{a'}{a}A_{14} =0. \label{7}
\end{align}
Condition (7) gives
\[da_0 A_{23} = d' a_0' A_{14};\]
now we can assume that $(d,a_0')=1$ and then also that $(a_0,a')=1$. Under these assumptions, we see that 
\[A_{14} = a_0 A_{14}'=a_0dA_{14}'' \qquad \text{and} \qquad A_{23}=a_0'd'A_{14}'';\]
substituting in (6) yields
\[A_{34} = BA_{14}''+CA_{12},\]
and therefore we deduce
\[\rT(A) = \Z \Big\langle au^{14} + a'u^{23} + B u^{34}, u^{12} + C u^{34} \Big\rangle = \begin{pmatrix} 2aa' & B \\ B & 2 C \end{pmatrix}.\]
\end{proof} 

\begin{ex}[Example \ref{ex1} continued]
We can now get all decompositions of the abelian surface $A$ having transcendental lattice $\rT(A) = 6(1,0,3)$. In fact,
since $h(O_{K,2})=1$, any pair of elliptic curves $(E_1,E_2)$ with $E_1 \in \ke ll (\ko_{K,4})$ and $E_2 \in \ke ll (\ko_{K,3})$
gives a decomposition; also, we can use pairs $(E_1,E_2)$ with $E_1 \in \ke ll (\ko_{K,2})$ and $E_2 \in \ke ll (\ko_{K,12})$.
It is easy to verify that we get exactly 24 strict decompositions.
\end{ex}

\subsection{New candidate decompositions}
\noindent
As a consequence of Proposition \ref{thm2}, we get a new family of decompositions of a given abelian surface $A$ of transcendental
lattice $Q= n Q_0$. Indeed, the group $C(D)$ acts on the class groups $C(D_1)$ and $C(D_2)$ by $\circledast$, and therefore,
once we are given a decomposition $A = E_{\tau(Q_1)} \times E_{\tau(Q_2)}$, we get new ones by taking
\[E_{\tau([Q_1] \circledast [R])} \times E_{\tau([Q_2] \circledast [R]^{-1})}, \qquad [R] \in C(D).\]
Notice that we can always cook up such a decomposition: consider the forms $Q_0$ and $P_0$ (the latter being the
principal form) of discriminant $D_0$, and if $s,t \in \Z$ are coprime nonnegative integers consider the abelian surface
$E_{s\tau(Q_0)} \times E_{t\tau(P_0)}$ as in Theorem \ref{thm1}. Then, it gives indeed a decomposition of $A$; now notice
that $s \tau(Q_0)$ corresponds to the form
\[ax^2+(bs)xy+(cs^2)y^2,\]
which is primitive in $C(s^2D)$, and similar considerations
hold for $t \tau(P_0)$.\\

\noindent
Now that we have these families of decompositions, is there a way of getting them all? Namely, to what extent does the action of
$C(D)$ on $C(D_1)$ and $C(D_2)$ give a description of the possible decompositions?

\section{Decompositions in the cases $K\neq\Q(i),\Q(\sqrt{-3})$}\label{alldec}

\subsection{Action of a class group on class groups of smaller discriminant}
Recall that if $D_0 \vert D$, the class group $C(D)$ acts on $C(D_0)$. Therefore, we might ask whether the action is transitive.
Notice that a form $Q_0 \in C(D_0)$ can be lifted to a primitive form $Q \in C(D)$ in such a way that $Q \circledast P_0 = Q_0$. 

\begin{lemma}
For every form $Q_0 \in C(D_0)$ there exists a form $Q \in C(D)$ which is the lift of $Q_0$ in the following sense: $Q \circledast P_0 = Q_0$.
\end{lemma}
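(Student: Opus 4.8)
The plan is to realise the lift at the level of ideal classes, where the generalized Dirichlet composition with the principal form is nothing but the natural projection between class groups. Under the bijections $C(D)\cong C(\ko_{K,f})$ and $C(D_0)\cong C(\ko_{K,f_0})$, the composition $\circledast$ corresponds to the product of ideals: if $\ga$ is a proper $\ko_{K,f}$-ideal and $\gb$ a proper $\ko_{K,f_0}$-ideal, then $\ga\gb$ has CM ring $\ko_{K,\gcd(f,f_0)}=\ko_{K,f_0}$ (since $f_0\mid f$), and its class depends only on $[\ga]$ and $[\gb]$. Taking $\gb=\ko_{K,f_0}$, which is the class of the principal form $P_0$, one gets $[\ga]\circledast[P_0]=[\ga\ko_{K,f_0}]=\pi([\ga])$, where $\pi\colon C(\ko_{K,f})\to C(\ko_{K,f_0})$ is the natural map induced by extension of ideals --- precisely the map appearing in the diagram of class groups in Section \ref{preliminaries}, which is surjective (for instance because the corresponding inclusion of ring class fields $L_0\subseteq\dots\subseteq L$ yields a surjection of Galois groups).

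Granting this, the proof is immediate: given $Q_0\in C(D_0)$, use surjectivity of $\pi$ to choose a class in $C(D)\cong C(\ko_{K,f})$ mapping to $[Q_0]$, and let $Q$ be any form of discriminant $D$ representing it; such a form is automatically primitive, hence genuinely an element of $C(D)$. Then $[Q]\circledast[P_0]=\pi([Q])=[Q_0]$, as desired.

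Alternatively --- and this is closer to the concrete spirit of the preceding sections --- one can exhibit $Q$ by hand. Writing $d:=f/f_0$, I would first replace $Q_0$ by a properly equivalent form $(a_0,b_0,c_0)$ with $\gcd(a_0,d)=1$ (possible by Lemmata 2.3 and 2.25 of \cite{cox13}, since $Q_0$ properly represents infinitely many integers prime to $d$), and then set $Q:=(a_0,db_0,d^2c_0)$. One checks at once that $\disc Q=d^2D_0=D$, and that $\gcd(a_0,d)=1$ forces $\gcd(a_0,db_0,d^2c_0)=\gcd(a_0,b_0,c_0)=1$, so $[Q]\in C(D)$. To compute $[Q]\circledast[P_0]$ one unwinds the definition: the relevant scaling indices here are $1$ and $d$, so $d\,[Q\circledast P_0]=[Q]*[dP_0]$ in $C(D)$, and a direct application of the Dirichlet composition formula \cite[Lemma 3.2]{cox13} --- with the choice $B=db_0$, which satisfies the three required congruences --- gives $Q*(dP_0)=(da_0,db_0,dc_0)=dQ_0$. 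Hence $d\,[Q\circledast P_0]=d\,[Q_0]$ in $C(D)$, and since $\circledast$ takes values in $C(D_0)$ this forces $[Q]\circledast[P_0]=[Q_0]$.

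The only genuinely delicate point in either route is the bookkeeping: checking that the conductors and discriminants of $Q$, $P_0$ and their composition fit together correctly (so that $\gcd(f,f_0)=f_0$ and the primitivity index of $[Q]*[dP_0]$ is exactly $d$), and, in the structural approach, that the map $C(\ko_{K,f})\to C(\ko_{K,f_0})$ obtained from the $\circledast$-action with $[P_0]$ really is the standard surjection between ring class groups. Once these identifications are in place there is no remaining obstacle --- the lemma is essentially a restatement of the surjectivity of that projection.
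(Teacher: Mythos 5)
Your proposal is correct, and your second (explicit) route is essentially the paper's own proof: the paper lifts $Q_0=(a_0,b_0,c_0)$ to the ideal $[a_0,\tfrac{-db_0+\sqrt{D}}{2}]$, which corresponds precisely to your form $Q=(a_0,db_0,d^2c_0)$, and then checks $[Q]*[dP_0]$ by multiplying ideals, whereas you check the same identity directly from the Dirichlet composition formula with the choice $B=db_0$; the substance (including the normalization $\gcd(a_0,d)=1$, which the paper leaves implicit in the ideal computation) is the same, and your congruence and primitivity checks are accurate. Your first, structural route --- identifying composition with $[P_0]$ as the extension-of-ideals map $\pi\colon C(\ko_{K,f})\to C(\ko_{K,f_0})$ and deducing the lemma from surjectivity of $\pi$, e.g.\ via the tower of ring class fields $L_0\subseteq L$ --- is genuinely different from what the paper does. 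It is cleaner and makes the subsequent corollary (transitivity of the $C(D)$-action on $C(D_0)$) immediate, but it requires the identification, which you rightly flag, that the $\circledast$-action by the principal class agrees with the standard projection between ring class groups appearing in the Galois-theoretic diagram of the preliminaries; the explicit route sidesteps that identification and, in addition, produces a concrete representative of the lift, which is what the later computations of transcendental lattices actually use. Either argument suffices to establish the lemma.
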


\begin{proof}
If $Q_0=[a_0,b_0,c_0]$ is represented by the ideal $[a_0, \frac{-b_0 + \sqrt{D_0}}{2}]$, then $Q$ correspond to the ideal $[a_0, \frac{-db_0 + \sqrt{D}}{2}]$; also $dP_0=[d,\frac{-dp_0 + \sqrt{D}}{2}]$, where $p_0=0,1$ according to the parity of the discriminant $D$. It follows that
\[[Q]*[dP_0] = [a_0d, \Delta] = [a_0,\Delta/d] = [Q],\]
where $\Delta = \frac{-B + \sqrt{D}}{2}$, and $B$ is the usual key integer in the Dirichlet composition.
\end{proof}

\noindent
As a consequence, we have the following

\begin{cor}
The action of $C(D)$ on $C(D_0)$ is transitive.
\end{cor}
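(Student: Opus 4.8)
The statement to prove is that the action of the class group $C(D)$ on $C(D_0)$ (for $D_0 \mid D$, via the generalized Dirichlet composition $\circledast$) is transitive. The key input is the preceding lemma: every $Q_0 \in C(D_0)$ lifts to some $Q \in C(D)$ with $Q \circledast P_0 = Q_0$. The plan is to reduce transitivity to the existence of such lifts, combined with the group axioms for $C(D)$ and the functoriality of $\circledast$.

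First I would recall that $C(D)$ is a genuine group (unlike the extended object $\overline{C}(D)$), with identity the principal class $[P]$. Next, I would observe that the action is indeed a group action: this is essentially property $(i)$ of the earlier proposition, $[Q_0] \circledast [P] = [Q_0]$, together with associativity/compatibility of composition, which both follow from the corresponding facts for ideal multiplication under the bijection $\overline{C}(D) \lra \overline{C}(\ko)$. Then, given two classes $[Q_0], [Q_0'] \in C(D_0)$, apply the lemma to each: pick lifts $[R], [R'] \in C(D)$ with $[R] \circledast [P_0] = [Q_0]$ and $[R'] \circledast [P_0] = [Q_0']$. Set $[S] := [R'] \circledast [R]^{-1} \in C(D)$, computed in the group $C(D)$. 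Then
\[
[S] \circledast [Q_0] = \big([R'] \circledast [R]^{-1}\big) \circledast \big([R] \circledast [P_0]\big) = [R'] \circledast [P_0] = [Q_0'],
\]
where the middle equality uses property $(iii)$ (or a direct associativity argument: $([R'] \circledast [R]^{-1}) \circledast ([R] \circledast [P_0]) = [R'] \circledast ([R]^{-1} \circledast [R]) \circledast [P_0] = [R'] \circledast [P] \circledast [P_0] = [R'] \circledast [P_0]$). This shows any class is carried to any other, i.e.\ transitivity.

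I do not expect a serious obstacle here — the lemma does all the real work, and what remains is bookkeeping with the group structure of $C(D)$ and the formal properties of $\circledast$ already listed. The one point deserving a sentence of care is that $[R]^{-1}$ and the product $[R'] \circledast [R]^{-1}$ are taken inside the honest group $C(D)$ (all three forms having discriminant $D$), so the manipulation above is just ordinary arithmetic in $C(D)$ followed by a single application of the $\circledast$-action; the mixed-discriminant subtleties only enter through the lemma, which we are entitled to assume. So the proof is essentially: "Apply the lemma twice and translate by a group element of $C(D)$."
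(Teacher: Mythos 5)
Your proof is correct and follows the paper's intended route: the paper leaves the corollary as an immediate consequence of the lifting lemma, and your argument simply writes out the standard details — every class lies in the orbit of the principal class $[P_0]$, and composing one lift with the inverse of another (justified by property $(iii)$ of the composition) carries any class to any other.
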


\noindent
This means that the factors of the decompositions
\[E_{\tau([Q_1] \circledast [R])} \times E_{\tau([Q_2] \circledast [R]^{-1})}, \qquad [R] \in C(D)\]
cover the whole class groups $C(D_1)$ and $C(D_2)$. However, we do not know whether we get distinct decompositions under
this action. Of course, if this were the case, then we would obtain the whole set of decompositions, matching Ma's formula.

\subsection{Distinct decompositions}
We now come to the issue of whether the set of decompositions we get with the above technique is complete or not. To do so,
let us assume $Q_1 \in C(D_1)$, $Q_2 \in C(D_2)$ and $R,S \in C(D)$. Moreover, suppose that 
\[[Q_1] \circledast [R] = [Q_1] \circledast [S], \qquad [Q_2] \circledast [R]^{-1} = [Q_2] \circledast [S]^{-1},\]
which is the case of a decomposition being realized by two elements $R,S \in C(D)$. This is equivalent to the existence of
an element $U \in C(D)$ such that
\[U \circledast Q_1 = Q_1, \qquad U \circledast Q_2 = Q_2.\]
So we are to understand the elements $U \in C(D)$ that fix $Q_i$, $i=1,2$.\newline

\noindent
Let $C(D)$ act on $C(D_0)$, and let $U$ be an element fixing some $Q_0 \in C(D_0)$; notice that $U$ would actually fix the
whole class groups $C(D_0)$. We call the group of such $U$'s the \emph{stabilizer} of $C(D_0)$ in $C(D)$, and it will be
denoted by $\Stab C(D_0)$; clearly, its order is $h(D) / h(D_0)$. In the situation of interest  to us, we can consider the
intersection $\Stab C(D_1) \cap \Stab C(D_2)$: it describes the elements in $C(D)$ that represent an obstruction to having
the full set of decompositions by twisting by $C(D)$ the factors of a given decomposition. \newline

\noindent
It may occur that $\Stab C(D_1) \cap \Stab C(D_2)$ is trivial, for instance when the class number or the index of primitivity
are small enough, or even when the orders of the stabilizers are powers of different primes. However, it is not difficult to
come up with an example of this not being the case when $K = \Q(i)$ or $K = \Q(\sqrt{-3})$.

\begin{ex}
Let $Q = 30 \cdot (1,0,3) \in \overline{C}(D)$, where $D = -60^2 3$. If $A$ is the singular abelian surface with $T(A)=Q$, then we know by Corollary \ref{thm_ma} that $\widetilde{\delta}(A) = 288$. There are four classes of products we have to consider.
\begin{enumerate}
    \item $\ke ll (\ko_{K,2}) \times \ke ll (\ko_{K,60})$: we already noticed above that this class gives $2 h(\ko_{K,60}) = 72$ distinct strict decompositions.
    \item $\ke ll (\ko_{K,4}) \times \ke ll (\ko_{K,30})$: we need to estimate the order of the stabilizers. We have (by means of the class number formula)
    \[\# \Stab C(\ko_{K,4}) = h(\ko_{K,60})/h(\ko_{K,4}) = 18,\]
    \[\# \Stab C(\ko_{K,30}) = h(\ko_{K,60})/h(\ko_{K,30}) = 2. \]
    It follows that $\# (\Stab C(\ko_{K,4}) \cap \Stab C(\ko_{K,30})) \leq 2$, and thus we get at least $h(\ko_{K,60})=36$ distinct strict decompositions.
    \item $\ke ll (\ko_{K,6}) \times \ke ll (\ko_{K,20})$: by using the class number formula, we see that
    \[\# \Stab C(\ko_{K,6}) = h(\ko_{K,60})/h(\ko_{K,6}) = 12,\]
    \[\# \Stab C(\ko_{K,20}) = h(\ko_{K,60})/h(\ko_{K,20}) = 3; \]
    thus $\# (\Stab C(\ko_{K,6}) \cap \Stab C(\ko_{K,20})) \leq 3$, and we get at least $2h(\ko_{K,60})/3=24$ distinct strict decompositions.
    \item $\ke ll (\ko_{K,10}) \times \ke ll (\ko_{K,10})$: by using the class number formula, we see that
    \[\# \Stab C(\ko_{K,10}) = h(\ko_{K,60})/h(\ko_{K,10}) = 6,\]
    \[\# \Stab C(\ko_{K,12}) = h(\ko_{K,60})/h(\ko_{K,12}) = 6; \]
    thus $\# (\Stab C(\ko_{K,10}) \cap \Stab C(\ko_{K,12})) \leq 6$, and we get at least $2h(\ko_{K,60})/6=12$ distinct strict decompositions.
\end{enumerate}
In total, we have obtained at least 144 distinct decompositions out of 288.
\end{ex}

\noindent
The question we would like to answer is the following
\begin{question}\label{question}
Is $\Stab C(\ko_{K,f_1}) \cap \Stab C(\ko_{K,f_2})$
is always trivial, or at least when $K \neq \Q(i), \Q(\sqrt{-3})$?
\end{question}
\noindent
Indeed, this will be the case, as we are going to show in the following.

\subsection{Answer to the question}

The key ingredient is the fact that the principal form represents all but finitely many unramified primes which split completely in the ring class field. Let $P_i$ be the principal form of the order $\ko_{K,f_i}$, $i= 1,2,\emptyset$. Also, let $L_i$ be the ring class field
of the order $\ko_{K,f_i}$, $i= 1,2,\emptyset$.

\begin{lemma}\label{split}
Let $\kp_{P_i}$ be the set of primes of represented by $P_i$, for $i= 1,2,\empty$. Then, $\kp_{P} \doteq \kp_{P_1} \cap \kp_{P_2}$.
\end{lemma}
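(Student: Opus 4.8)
\medskip

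The plan is to pass from quadratic forms to ring class fields and completely split primes, where the required multiplicativity is already packaged in Lemma~\ref{lemmasplit}. First I would recall, from the discussion of numbers represented by the principal form, that for each $i\in\{1,2,\emptyset\}$ the principal form $P_i$ of the order $\ko_{K,f_i}$ satisfies $\kp_{P_i}\doteq\hat{\kp}_{P_i}=\Spl(L_i/\Q)$, where $L_i$ is the ring class field of $\ko_{K,f_i}$; in particular $\kp_P=\kp_{P_\emptyset}\doteq\Spl(L/\Q)$ for $L=L_\emptyset$ the ring class field of $\ko_{K,f}$ with $f=\lcm(f_1,f_2)$. Since $\doteq$ is transitive and is preserved under finite intersections, it then suffices to prove the honest equality
\[\Spl(L/\Q)=\Spl(L_1/\Q)\cap\Spl(L_2/\Q).\]

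Next I would bring in the structure of ring class fields. The orders $\ko_{K,f_1}$ and $\ko_{K,f_2}$ fit into the diagram preceding Proposition~\ref{key_cond} with $f_0=\gcd(f_1,f_2)$ and $f=\lcm(f_1,f_2)$, and since all of Section~\ref{alldec} assumes $K\neq\Q(i),\Q(\sqrt{-3})$, i.e.\ $d_K\notin\{-3,-4\}$, part~(1) of Proposition~\ref{key_cond} gives $L=L_1L_2$. It remains to identify $\Spl(L_1L_2/\Q)$ with $\Spl(L_1/\Q)\cap\Spl(L_2/\Q)$. One way is simply to note that Lemma~\ref{lemmasplit} holds verbatim with $\Q$ in place of $K$, by the identical argument, and to apply it to $L_1,L_2$. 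Alternatively, one descends to $K$: for a rational prime $p$ unramified in $L$ and any ring class field $N$ of an order of $K$ (so $K\subseteq N$ and $N/\Q$ is Galois), $p\in\Spl(N/\Q)$ if and only if $p$ splits in $K$, say $p\ko_K=\gp\bar\gp$, and $\gp\in\Spl(N/K)$; then Lemma~\ref{lemmasplit} over $K$ applied to $L_1,L_2\supseteq K$ yields $\Spl(L/K)=\Spl(L_1/K)\cap\Spl(L_2/K)$, which reads back over $\Q$ as the displayed equality.

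Putting the two steps together gives
\[\kp_P\doteq\Spl(L/\Q)=\Spl(L_1/\Q)\cap\Spl(L_2/\Q)\doteq\kp_{P_1}\cap\kp_{P_2}.\]
I expect the only point needing care to be the $\Q$-versus-$K$ bookkeeping that allows Lemma~\ref{lemmasplit} to be invoked in the form stated, together with checking that the hypotheses of Proposition~\ref{key_cond}(1) are genuinely in force — which they are, precisely because this section excludes the two exceptional fields. Everything else is a direct concatenation of results already established: the identification $\kp_{P_i}\doteq\Spl(L_i/\Q)$, the equality $L=L_1L_2$, and the stability of $\doteq$ under finite intersection.
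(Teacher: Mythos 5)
Your proof is correct and takes essentially the same route as the paper: the paper's one-line argument is $\kp_P \doteq \Spl(L/K) = \Spl(L_1/K)\cap\Spl(L_2/K) \doteq \kp_{P_1}\cap\kp_{P_2}$, i.e.\ the representation-by-principal-form criterion combined with Lemma \ref{lemmasplit}, tacitly using $L=L_1L_2$ from Proposition \ref{key_cond} under the section's standing assumption. Your write-up simply makes explicit the composite-field step and the $\Q$-versus-$K$ bookkeeping that the paper leaves implicit.
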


\begin{proof}
By using Lemma \ref{lemmasplit}, we see that
\[{\kp}_P \doteq \Spl(L/K) = \Spl(L_1/K) \cap \Spl(L_2/K) \doteq \kp_{P_1} \cap \kp_{P_2}.\]
\end{proof}

\noindent
By the \v{C}ebotarev Density Theorem, we can reason with the set $\kp_{P_i}$ rather than $\Spl(L_i/K)$, for
$i= 1,2,\emptyset$: in fact, they both have positive Dirichlet density (thus they are infinite), and they are
the same up to a finite set (which has Dirichlet density 0). 

\begin{prop}\label{char_princ_form}
The principal form is characterized by representing almost all primes that split completely in the ring class field.
\end{prop}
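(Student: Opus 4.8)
The plan is to reduce the assertion to the injectivity of the Artin map, combined with the description of $\kp_Q$ obtained in the subsection on numbers represented by the principal form and with the \v{C}ebotarev Density Theorem. Write $D:=\disc Q$, let $K$ be the associated imaginary quadratic field, $\ko$ the order of discriminant $D$, and $L$ its ring class field. The content to establish is that a primitive form $Q$ of discriminant $D$ with $\Spl(L/\Q)\,\dot{\subset}\,\kp_Q$ (that is, ``$Q$ represents almost all primes splitting completely in $L$'') must be properly equivalent to the principal form $P$; the reverse implication is immediate, since $\kp_P\doteq\hat{\kp}_P=\Spl(L/\Q)$, so $P$ represents all but finitely many such primes.

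First I would recall that $\kp_Q\doteq\hat{\kp}_Q$, where $\hat{\kp}_Q$ consists of the rational primes unramified in $K$ whose Frobenius in $\Gal(L/\Q)$ lies in the conjugacy class $\langle\sigma\rangle$ attached to the ideal class of $Q$ under $C(D)\cong C(\ko)\cong\Gal(L/K)\subset\Gal(L/\Q)$; for $Q=P$ this conjugacy class is $\{\id\}$, which is exactly why $\hat{\kp}_P=\Spl(L/\Q)$. The point I would then exploit is that, up to a set of density zero, $\hat{\kp}_Q$ is a full union of Frobenius fibres of $L/\Q$, that each such fibre has positive \v{C}ebotarev density, and that fibres over distinct conjugacy classes of $\Gal(L/\Q)$ are disjoint.

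With this dictionary in place the argument is quick: assuming $\Spl(L/\Q)\,\dot{\subset}\,\kp_Q$, one gets $\hat{\kp}_P\,\dot{\subset}\,\hat{\kp}_Q$; were $\id$ not in the conjugacy class $\langle\sigma\rangle$ attached to $[Q]$, the Frobenius fibre over $\{\id\}$ (which is $\hat{\kp}_P$, infinite by \v{C}ebotarev) would be disjoint from $\hat{\kp}_Q$, contradicting the almost-inclusion; hence $\id\in\langle\sigma\rangle$, and since the conjugacy class of the identity is $\{\id\}$ this forces $\sigma=\id$. Injectivity of the Artin isomorphism $C(D)\to\Gal(L/K)$ then yields $[Q]=[P]$, i.e.\ $Q$ is properly equivalent to the principal form. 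I would also note that no primitivity hypothesis needs to be imposed separately: an imprimitive form of discriminant $D$ represents only finitely many primes, hence cannot represent almost all of the infinite set $\Spl(L/\Q)$.

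The whole proof is essentially bookkeeping once the correspondence between forms, ideal classes, Artin symbols and Frobenius fibres is laid out; the single load-bearing step is the passage from ``$Q$ represents almost all primes of $\Spl(L/\Q)$'' to ``the Artin symbol of $Q$ is trivial'', which rests on the fact that distinct conjugacy classes of $\Gal(L/\Q)$ cannot have Frobenius fibres agreeing up to a finite set — precisely the content of \v{C}ebotarev's theorem. I expect no further obstacle.
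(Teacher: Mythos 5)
Your argument is correct and is essentially the paper's own proof: both translate ``primes represented by $Q$'' into the Frobenius-fibre description $\hat{\kp}_Q$, invoke the \v{C}ebotarev Density Theorem to see that the fibre over the identity is infinite and disjoint from fibres over other conjugacy classes, conclude $\sigma=1$, and then pass back through the Artin correspondence to identify $[Q]$ with the principal class. Your version merely spells out the disjointness of distinct Frobenius fibres and the imprimitive case a bit more explicitly than the paper does.
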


\begin{proof}
Suppose $Q$ is a form such that $\kp_Q \doteq \Spl(L/K)$. Then, we would have
\[\Big\lbrace \text{$p$ prime} \, \Big\vert \, \text{$p$ unramified}, \, \Big( \dfrac{L/K}{p} \Big)= \langle\sigma\rangle \Big\rbrace  \doteq \Big\lbrace \text{$p$ prime} \, \Big\vert \, \Big( \dfrac{L/K}{p} \Big)= \langle 1 \rangle \Big\rbrace, \]
and since both sets have infinitely many elements it must necessarily be $\sigma=1 \in \Gal(L/K)$, which corresponds to the class of the principal form. Since equivalent forms represent the same numbers, we are done.
\end{proof}

\noindent
We can now answer Question \ref{question}:

\begin{thm}\label{thmthm}
Unless $d_K \in \lbrace -3,-4\rbrace$, $f_1,f_2 >1$ and $f_0=1$, we have
\[\Stab C(\ko_{K,f_1}) \cap \Stab C(\ko_{K,f_2}) = (0).\]
\end{thm}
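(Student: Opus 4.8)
The plan is to reduce the claim to the characterization of the principal form in Proposition~\ref{char_princ_form}, exploiting the Artin reciprocity / ring class field machinery recalled above. First I would reformulate what needs to be shown: an element $U$ lies in $\Stab C(\ko_{K,f_1}) \cap \Stab C(\ko_{K,f_2})$ precisely when $U$ fixes every class in $C(D_1)$ and every class in $C(D_2)$ under $\circledast$; equivalently, viewing $U \in C(D)$ as an ideal class of the order $\ko_{K,f}$ (where $f=\lcm(f_1,f_2)$), its images under the natural surjections $C(\ko_{K,f}) \to C(\ko_{K,f_1})$ and $C(\ko_{K,f}) \to C(\ko_{K,f_2})$ are both trivial. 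So I must prove that the kernel of the map $C(\ko_{K,f}) \to C(\ko_{K,f_1}) \times C(\ko_{K,f_2})$ is trivial (outside the exceptional case).

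Next I would translate this to the level of ring class fields via the isomorphisms $\Gal(L_i/K) \cong C(\ko_{K,f_i})$: the kernel above corresponds, under $\Gal(L/K) \cong C(\ko_{K,f})$, to $\Gal(L/L_1 L_2)$, since an element of $\Gal(L/K)$ restricts trivially to both $L_1$ and $L_2$ exactly when it fixes the compositum $L_1 L_2$. By Proposition~\ref{key_cond}, under the hypothesis $d_K \notin \{-3,-4\}$, or $f_1=1$ or $f_2=1$, or $f_0>1$, we have $L = L_1 L_2$, so $\Gal(L/L_1 L_2) = (0)$ and we are done immediately. This already disposes of all cases except the excluded one; the statement of the theorem is precisely carved out so that Proposition~\ref{key_cond} applies.

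Alternatively, to keep the argument in the spirit of the preceding subsection (and to avoid invoking Proposition~\ref{key_cond} as a black box), I would argue with represented primes: suppose $U$ fixes $P_1$ and $P_2$ (the principal forms), hence $\kp_{U \circledast P_i} = \kp_{P_i}$ as sets up to finitely many primes. Since $U$ fixes the whole of $C(D_1)$ and $C(D_2)$ it in particular fixes the principal classes, so the form $Q$ lifting $U$ satisfies $\kp_Q \cap \kp_{P_1} \doteq \kp_{P_1}$ and likewise for $P_2$; combined with Lemma~\ref{split}, $\kp_P \doteq \kp_{P_1} \cap \kp_{P_2}$, this forces $\kp_Q \doteq \Spl(L/K)$, and then Proposition~\ref{char_princ_form} gives that $U$ is the principal class, i.e.\ $U=0$ in $C(D)$. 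The main obstacle is precisely the point where $L = L_1L_2$ is needed: when $d_K \in \{-3,-4\}$, $f_1,f_2>1$ and $f_0=1$, the compositum $L_1L_2$ is a proper subfield of $L$ (of index $2$ or $3$ by Proposition~\ref{key_cond}), so $\Gal(L/L_1L_2)$ is nontrivial and the stabilizer intersection genuinely fails to vanish — which is why those cases are treated separately in Section~\ref{remainingcases}. I should double-check that the surjectivity of $C(\ko_{K,f}) \to C(\ko_{K,f_i})$ (equivalently transitivity of the corresponding action, already established as a corollary above) is what identifies $\ker$ with $\Gal(L/L_1L_2)$ rather than something smaller, and that the orders match: $\#\bigl(\Stab C(\ko_{K,f_1}) \cap \Stab C(\ko_{K,f_2})\bigr) = [L : L_1 L_2]$.
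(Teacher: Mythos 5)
Your main argument is correct but takes a genuinely different route from the paper. The paper argues via represented primes: for $[Q]$ in the intersection of the stabilizers, every prime represented by $Q$ is represented by both principal forms $P_1,P_2$, hence (Lemma \ref{split}, using $L=L_1L_2$ from Proposition \ref{key_cond}) lies in $\Spl(L/K)$ up to a finite set, and Proposition \ref{char_princ_form} (\v{C}ebotarev) then forces $[Q]$ to be principal. You instead identify $\Stab C(\ko_{K,f_i})$ with $\ker\big(C(\ko_{K,f})\to C(\ko_{K,f_i})\big)$, translate via the Artin isomorphisms and their compatibility with restriction into $\Gal(L/L_i)$, so that the intersection becomes $\Gal(L/L_1L_2)$, which vanishes exactly when $L=L_1L_2$, i.e.\ outside the excluded case of Proposition \ref{key_cond}. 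This is purely algebraic (no density arguments), and it buys more than the paper's proof: it shows the exclusion is sharp, with $\#\big(\Stab C(\ko_{K,f_1})\cap\Stab C(\ko_{K,f_2})\big)=[L:L_1L_2]\in\{2,3\}$ in the exceptional cases, whereas the paper's Chebotarev argument only gives the vanishing statement. The price is that you must (as you note) justify the compatibility of the surjections $C(\ko_{K,f})\to C(\ko_{K,f_i})$ with the restriction maps $\Gal(L/K)\to\Gal(L_i/K)$, which the paper uses implicitly when drawing its diagram of class groups, and the identification $\Stab C(\ko_{K,f_i})=\ker\big(C(\ko_{K,f})\to C(\ko_{K,f_i})\big)$, which follows from the description of $\circledast$ as image-then-multiply.

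One small slip in your alternative (represented-primes) sketch, which is essentially the paper's proof: from $U\circledast P_i=P_i$ the correct deduction is $\kp_Q\,\dot{\subset}\,\kp_{P_i}$ (a prime represented by $Q$ is a norm from the class of $U$ in $\ko_{K,f}$, hence from its image class $P_i$ in $\ko_{K,f_i}$), not $\kp_Q\cap\kp_{P_i}\doteq\kp_{P_i}$; the latter inclusion $\kp_{P_i}\,\dot{\subset}\,\kp_Q$ does not follow from the hypothesis. The correct direction is what the paper uses and it suffices, since $\kp_Q\,\dot{\subset}\,\kp_{P_1}\cap\kp_{P_2}\doteq\Spl(L/K)$ together with \v{C}ebotarev (the argument of Proposition \ref{char_princ_form}) already forces the Artin class of $Q$ to be trivial.
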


\begin{proof}
Let $[Q] \in \Stab C(D_1) \cap \Stab C(D_2)$, i.e. $[Q]$ is such that
\[Q \circledast P_1 = P_1, \qquad Q \circledast P_2 = P_2.\] 
Now, for $i=1,2$, the primes represented by $P_i$ are, up to a finite set, those $p$ that split completely in the ring
class field $L_i$. In the same fashion, the primes represented by $Q$ are, up to a finite set, the ones splitting
completely in the ring class field $L$. Notice that, by the assumption, it follows that all primes represented by $Q$
are also represented by $P_1$ and $P_2$. Moreover, by Proposition \ref{key_cond}, $L = L_1 L_2$, and Lemma \ref{split}
and Proposition \ref{char_princ_form} imply that $Q$ is in fact the principal form.
\end{proof}

\noindent
Now, let us recall that to an element $[Q] \in C(\ko)$ in a class group we can associate an elliptic curve by setting $E_Q := E_{\tau(Q)}$. In light of this, we can rephrase the previous result as follows.
\begin{thm}\label{mainthm}
Unless $d_K \in \lbrace -3,-4\rbrace$, $f_1,f_2 >1$ and $f_0=1$, the group $C(\ko_{K,f})$ spans all the possible decompositions of $A$ into the products of elliptic curves with classes in $C(\ko_{K,f_1})$ and $C(\ko_{K,f_2})$.
\end{thm}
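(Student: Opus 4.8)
The plan is to deduce Theorem \ref{mainthm} directly from Theorem \ref{thmthm} together with the action of $C(\ko_{K,f})$ on the set of decompositions described in Section \ref{transcendental}. First I would fix a singular abelian surface $A$ with $\rT(A) = Q = nQ_0$, where $Q_0$ is primitive of discriminant $D_0 = f_0^2 d_K$, and set $D := \disc Q = n^2 D_0$; and I would fix the two orders $\ko_{K,f_1}$, $\ko_{K,f_2}$ under consideration, with $f_0 = \gcd(f_1,f_2)$ and $f = \lcm(f_1,f_2)$, subject to the constraints from Proposition \ref{thm_shioda-mitani} ($(f_1,f_2)=f_0$ and $f_1 f_2 = n f_0^2$, equivalently $\bar f_1 \bar f_2 = n$ with $\gcd(\bar f_1, \bar f_2)=1$). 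Passing to ideal class groups, the relevant composition is the multiplication $C(\ko_{K,f_1}) \times C(\ko_{K,f_2}) \to C(\ko_{K,f_0})$ of Section \ref{transcendental}, and $C(\ko_{K,f})$ surjects onto both $C(\ko_{K,f_1})$ and $C(\ko_{K,f_2})$ via the norm-type maps in the diagram of class groups.

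Next I would set up the orbit–stabilizer bookkeeping. Given a decomposition $A \cong E_{\tau(Q_1)} \times E_{\tau(Q_2)}$ with $Q_i \in C(\ko_{K,f_i})$ (which exists by Corollary \ref{cor1}, taking $R_0 = P_0$), Proposition \ref{thm2} shows that for every $[R] \in C(\ko_{K,f})$ the pair $\big(E_{\tau([Q_1]\circledast[R])},\, E_{\tau([Q_2]\circledast[R]^{-1})}\big)$ is again an admissible decomposition of $A$, because $[d_1 Q_1 \circledast R] * [d_2 Q_2 \circledast R^{-1}]$ composes to the same class $nQ_0 = \rT(A)$ — here $R$ acts on $C(\ko_{K,f_1})$ by multiplication by its image and on $C(\ko_{K,f_2})$ by the inverse of its image, so the two twists cancel. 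This defines an action of $C(\ko_{K,f})$ on the set of decompositions of $A$ with factors in $C(\ko_{K,f_1}) \times C(\ko_{K,f_2})$. The corollary to transitivity shows the first coordinate of the orbit of any fixed decomposition already sweeps out all of $C(\ko_{K,f_1})$; the content of the theorem is that for a fixed $[Q_1]$, as $[R]$ ranges over $\Stab C(\ko_{K,f_1})$ (the kernel of $C(\ko_{K,f})\to C(\ko_{K,f_1})$), the second coordinate $[Q_2]\circledast[R]^{-1}$ still ranges over all of $C(\ko_{K,f_2})$. Two elements $R, S \in C(\ko_{K,f})$ yield the same decomposition precisely when $U := RS^{-1}$ fixes both $Q_1$ and $Q_2$, i.e.\ $U \in \Stab C(\ko_{K,f_1}) \cap \Stab C(\ko_{K,f_2})$; by Theorem \ref{thmthm} this intersection is trivial under our hypotheses, so the action is \emph{free}, hence simply transitive on each orbit. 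Since by Proposition \ref{thm_shioda-mitani} every decomposition of $A$ has its factors in some pair $(\ko_{K,f_1},\ko_{K,f_2})$ with the stated conductor constraints, and transitivity places all decompositions with a fixed such pair in a single orbit, $C(\ko_{K,f})$ acts simply transitively on that set — which is the assertion.

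The one point requiring care, rather than a genuine obstacle, is matching the assertion "spans all possible decompositions": I must argue that \emph{every} admissible decomposition with factors of CM-conductors $f_1, f_2$ lies in the single $C(\ko_{K,f})$-orbit of the reference decomposition $E_{\tau(Q_0)} \times E_{\tau(P)}$ (or whatever base point one chooses). For this I would invoke transitivity of the $C(D)$-action on $C(D_0)$ twice — once to move the first factor into place and once to check the forced value of the second factor — using the Proposition \ref{thm_shioda-mitani} relation $M_1 M_2 \sim M_0$, which pins down $[Q_2]$ from $[Q_1]$ up to the stabilizer ambiguity, now known to be trivial. The main subtlety is keeping the extended-class-group lift/drop conventions consistent (the distinction between a form in $C(D)$ and its primitive part in $C(D/m^2)$, as flagged in the redefinition of $\overline{C}(D)$), so that "multiplication by $[R]$ on one factor, by $[R]^{-1}$ on the other" genuinely leaves $\rT(A)$ unchanged; this is exactly property $(iii)$ of the composition $\circledast$ established in the Proposition on elementary properties of $\circledast$, applied with $[Q_1], [Q_2]$ in place of the generic forms. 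Everything else is formal orbit–stabilizer counting, and no new estimates are needed beyond Theorem \ref{thmthm}.
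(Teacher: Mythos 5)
Your overall architecture matches the paper's: twist a reference decomposition by $[R]\in C(\ko_{K,f})$ via $\circledast$, and use Theorem \ref{thmthm} to see that two twists $R,S$ give the same decomposition only if $RS^{-1}\in \Stab C(\ko_{K,f_1})\cap \Stab C(\ko_{K,f_2})=(0)$, so the action is free and the orbit has exactly $h(\ko_{K,f})$ elements. That part is fine. The gap is in the step you flag as "the one point requiring care": you claim that transitivity of $C(D)$ on a single class group, plus the relation $M_1M_2\sim M_0$ of Proposition \ref{thm_shioda-mitani}, pins down $[Q_2]$ from $[Q_1]$ "up to the stabilizer ambiguity, now known to be trivial". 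That identification is wrong. Once the first factor is moved into place, the second factors of two decompositions with the same first factor differ by an element of $\ker\big(C(\ko_{K,f_2})\to C(\ko_{K,f_0})\big)$, a subgroup of $C(\ko_{K,f_2})$ of order $h(\ko_{K,f_2})/h(\ko_{K,f_0})$, which is in general nontrivial and is not the group $\Stab C(\ko_{K,f_1})\cap \Stab C(\ko_{K,f_2})\subseteq C(\ko_{K,f})$ controlled by Theorem \ref{thmthm}. To absorb this ambiguity into the orbit you need the image of $\Stab C(\ko_{K,f_1})$ in $C(\ko_{K,f_2})$ to be all of that kernel, i.e.\ $h(\ko_{K,f})/h(\ko_{K,f_1})=h(\ko_{K,f_2})/h(\ko_{K,f_0})$. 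Freeness alone only gives that the orbit (of size $h(\ko_{K,f})$) injects into the set of decompositions with fixed conductors, whose cardinality is $h(\ko_{K,f_1})h(\ko_{K,f_2})/h(\ko_{K,f_0})$; it does not give equality.

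What closes this gap in the paper is precisely a counting input that your proposal never supplies: either the comparison with Ma's formula invoked in the proof of Theorem \ref{classthm}, or, equivalently, the identity
\[
h(\ko_{K,f_1})\,h(\ko_{K,f_2})=h(\ko_{K,f})\,h(\ko_{K,f_0}),
\]
valid outside the excluded case $d_K\in\{-3,-4\}$, $f_1,f_2>1$, $f_0=1$, which the paper proves from the class number formula (Theorem \ref{clnumbfor}) in the proposition of the subsection on the alternative proof of Ma's formula. Note that this identity genuinely fails in the excluded case (there $h(\ko_{K,f_1})h(\ko_{K,f_2})/h(\ko_{K,f_0})=h(\ko_{K,f})/2$ or $h(\ko_{K,f})/3$), which is exactly why the unit index enters; so "spans all" cannot be a purely formal consequence of Theorem \ref{thmthm} and transitivity on a single class group, and any correct proof must use the class-number (or Ma) count. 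Your proposal should be amended to add this identity (or the matching with Ma's formula) as the final ingredient; with it, your orbit--stabilizer bookkeeping does yield the theorem, and then your argument coincides with the paper's.
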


\noindent
As a consequence, we obtain the classification theorem for decompositions of singular abelian surfaces in case the transcendental lattice is not a multiple of $\begin{pmatrix} 2 & 0 \\ 0 & 2 \end{pmatrix}$ or $\begin{pmatrix} 2 & 1 \\ 1 & 2 \end{pmatrix}$. 

\begin{thm}\label{classthm}
Let $A$ be a singular abelian surface having transcendental lattice $Q = nQ_0$, with $Q_0$ neither $\begin{pmatrix} 2 & 0 \\ 0 & 2 \end{pmatrix}$ nor $\begin{pmatrix} 2 & 1 \\ 1 & 2 \end{pmatrix}$; let $D=f^2 d_K =\disc Q$, $D_0 = f_0^2 d_K = \disc Q_0$ and consider all pairs $(f_1,f_2)$ of positive integer such that
\[\gcd(f_1,f_2) = f_0 \qquad \text{and} \qquad f_1 f_2 = nf_0^2.\]
Then $A$ decomposes into the product of two mutually isogenous elliptic curves with complex multiplication according to one of the following possibilities:

\begin{enumerate}
    \item $A \cong E_{\tau([Q_0] \circledast [R])} \times E_{\tau([P] \circledast [R]^{-1})}$, for $[R] \in C(D)$;
    \item for any choice of a pair $(f_1,f_2)$ such that
\[\gcd(f_1,f_2)=1 \qquad \text{and} \qquad f_1 f_2 = n f_0,\]
take a form in the class $[Q_0]$ that lifts to a primitive form $Q_1$ of discriminant $D_1$, and similarly lift a form in $[P_0]$ to a primitive form $Q_2$ of discriminant $D_2$; then, $A \cong E_{\tau([Q_1] \circledast [R])} \times E_{\tau([Q_2] \circledast [R]^{-1})}$, for $[R] \in C(D)$.
\end{enumerate}
\end{thm}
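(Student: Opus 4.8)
The plan is to deduce the classification by combining Proposition~\ref{thm_shioda-mitani}, which pins down the admissible conductor pairs, with Theorem~\ref{thm1}, Proposition~\ref{thm2} and Theorem~\ref{mainthm}, which describe all decompositions for a fixed such pair. Two things must be checked: that every product listed in~(1) and~(2) is isomorphic to $A$, and, conversely, that every decomposition of $A$ appears in one of the two lists. For the first I will compare transcendental lattices: under the Shioda--Mitani correspondence $\Sigma^{\rm Ab}\longlra\kq^+/\SL_2(\Z)$ it suffices to show that the (positively oriented) transcendental lattice of each product equals $[\rT(A)]=n[Q_0]$. For the second, by Proposition~\ref{thm_shioda-mitani} the two factors of any decomposition of $A$ have complex multiplication by orders $\ko_{K,f_1}$ and $\ko_{K,f_2}$ whose conductors satisfy $\gcd(f_1,f_2)=f_0$ and $f_1f_2=nf_0^2$; writing $\bar f_i:=f_i/f_0$ these are exactly the coprime factorizations $n=\bar f_1\bar f_2$. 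So it is enough, for each such conductor pair, to exhibit one decomposition of $A$ and then invoke Theorem~\ref{mainthm} to recover all of them.

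I first produce these seed decompositions and verify they are isomorphic to $A$, distinguishing whether one of the $\bar f_i$ equals $1$. If $\bar f_1=1$ (so $f_1=f_0$ and $f_2=f=nf_0$), apply Theorem~\ref{thm1} with the two base classes $[Q_0]\in C(D_0)$ and $[P_0]\in C(D_0)$ and with $s=1,\ t=n$: the product is $E_{\tau(Q_0)}\times E_{n\tau(P_0)}$, whose transcendental lattice is $n[Q_0*P_0]=n[Q_0]=[\rT(A)]$ and is positively oriented by the computation in that proof; since $n\tau(P_0)=\tau(P)$ for $P$ the principal form of discriminant $D$, this is the $[R]$-trivial member of family~(1). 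If instead $\bar f_1,\bar f_2>1$, use the fact that every ideal class in $C(D_0)$ admits a primitive lift to a class group of larger conductor to choose a primitive form $Q_1\in C(D_1)$ with $Q_1\circledast P_0=Q_0$ and a primitive form $Q_2\in C(D_2)$ with $Q_2\circledast P_0=P_0$; by Proposition~\ref{thm2} (with $d_i=f/f_i=\bar f_{3-i}$, so $d_1d_2=n$) the transcendental lattice of $E_{\tau(Q_1)}\times E_{\tau(Q_2)}$ is $n[Q_1\circledast Q_2]$, and translating to ideal classes $[Q_1\circledast Q_2]$ is the product in $C(\ko_{K,f_0})$ of the images of $[Q_1]$ and $[Q_2]$, namely $[Q_0]\cdot[P_0]=[Q_0]$; hence the lattice is $n[Q_0]=[\rT(A)]$, and this is the $[R]$-trivial member of family~(2). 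Replacing $Q_1,Q_2$ by properly equivalent forms, one may moreover assume their leading coefficients are coprime (as in the proofs of Theorem~\ref{thm1} and Proposition~\ref{thm2}), so that Proposition~\ref{thm2} applies verbatim.

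Now fix a conductor pair $(f_1,f_2)$. We are never in the exceptional case of Theorem~\ref{mainthm}: that case requires $d_K\in\{-3,-4\}$ together with $f_0=1$, but then $D_0=f_0^2d_K=d_K$, so $C(D_0)$ is trivial and $Q_0$ would be $(1,0,1)$ or $(1,1,1)$, contrary to the hypothesis on $Q_0$. Therefore Theorem~\ref{mainthm} applies, and every decomposition of $A$ whose two factors have complex multiplication by $\ko_{K,f_1}$ and $\ko_{K,f_2}$ is obtained from the seed decomposition above by replacing its factors with $E_{\tau([Q_1]\circledast[R])}\times E_{\tau([Q_2]\circledast[R]^{-1})}$ for some $[R]\in C(D)$ --- which, when $\bar f_1=1$, is precisely $E_{\tau([Q_0]\circledast[R])}\times E_{\tau([P]\circledast[R]^{-1})}$ (using $[P]\circledast[R]^{-1}=[P]*[R]^{-1}=[R]^{-1}$), i.e.\ family~(1), and otherwise is family~(2); each such twisted product is again isomorphic to $A$ by Proposition~\ref{thm2} together with the multiplicativity properties of $\circledast$. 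Since by Proposition~\ref{thm_shioda-mitani} there are no other conductor pairs, every decomposition of $A$ (up to swapping the two factors) lies in list~(1) or list~(2), which is the assertion.

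The main obstacle is not in \ref{classthm} itself --- the hard work is already done in Theorem~\ref{mainthm} --- but rather in the bookkeeping that matches the two abstract twisting families against the concrete conductor pairs, together with a handful of routine verifications: the equality $n\tau(P_0)=\tau(P)$, the coprimality of the leading coefficients of the lifts $Q_1,Q_2$, and the positive orientation of the resulting lattices, all of which are immediate from the cited statements. For the record, the exhaustiveness built into Theorem~\ref{mainthm} rests on Theorem~\ref{thmthm} (that $\Stab C(\ko_{K,f_1})\cap\Stab C(\ko_{K,f_2})=(0)$) together with the identity $\ker\big(C(\ko_{K,f})\to C(\ko_{K,f_0})\big)=\Stab C(\ko_{K,f_1})\cdot\Stab C(\ko_{K,f_2})$; comparing orders via Theorem~\ref{clnumbfor} this identity amounts to $h(\ko_{K,f_1})\,h(\ko_{K,f_2})=h(\ko_{K,f})\,h(\ko_{K,f_0})$, which one checks holds for every admissible conductor pair except exactly when $d_K\in\{-3,-4\}$, $f_0=1$ and $f_1,f_2>1$ --- precisely matching the hypothesis of the theorem.
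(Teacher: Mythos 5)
Your proposal is correct, and it reaches the classification by a route that differs from the paper's in one essential point: the completeness certificate. The paper's proof is a counting argument --- apply Theorem~\ref{thmthm} to each admissible conductor pair and observe that the total number of distinct decompositions so produced equals Ma's count (Corollary~\ref{cor_ma}), so nothing can be missing. You instead argue intrinsically: Proposition~\ref{thm_shioda-mitani} forces the factors of \emph{any} decomposition to have CM by orders whose conductors form an admissible pair, and for each fixed pair Theorem~\ref{mainthm} says the $C(D)$-twists of a seed span everything; you then correctly note that the spanning content of Theorem~\ref{mainthm} ultimately rests on Theorem~\ref{thmthm} together with the identity $h(\ko_{K,f_1})h(\ko_{K,f_2})=h(\ko_{K,f})h(\ko_{K,f_0})$, which is exactly the computation the paper performs afterwards in its alternative proof of Ma's formula. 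So the two arguments draw on the same ingredients but order them differently: the paper imports Ma's formula as an external check, while your version is self-contained (and is in the spirit of the paper's own remark that the construction reproves Ma's formula). Two of your supporting observations are genuinely useful additions that the paper leaves implicit: the verification, via Theorem~\ref{thm1}, Proposition~\ref{thm2} and the properties of $\circledast$, that every product in families (1) and (2) --- including all twists by $[R]\in C(D)$, whose factors stay in $C(D_1)$ and $C(D_2)$ so that the primitivity index remains $n$ --- has oriented transcendental lattice $n[Q_0]$; and the remark that the hypothesis on $Q_0$ excludes the exceptional case of Theorem~\ref{thmthm}, since $d_K\in\{-3,-4\}$ with $f_0=1$ would force $Q_0$ to be one of the two excluded forms because those discriminants have class number one.
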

\begin{proof}[Proof of Theorem \ref{classthm}]
The result follows by applying Theorem \ref{thmthm} to each pair $(f_1,f_2)$ such that
\[f_1 f_2 = n f_0^2 \qquad \text{and} \qquad \gcd(f_1,f_2)=f_0,\]
and by noticing that the number of decompositions we get matches with Ma's original formula.
\end{proof}

\subsection{Alternative proof of Ma's formula}

The classification of decompositions of singular abelian surfaces has been obtained by producing enough distinct decompositions to match Ma's formula. However, our construction incidentally provides the reader with an alternative and simpler proof of the same result, at least in the cases covered by the classification theorem (Theorem \ref{thmthm}).\\

\noindent
Let $\Sigma^{\rm Ab}(D,n)$ be the space of singular abelian surfaces of discriminant $D$ and primitivity index $n$, i.e.~the space of surfaces $A$ such that $\rT(A) = n Q_0$, for a primitive form $Q_0$, and $\disc \rT(A) = D$. If we consider all the elements of $\Sigma^{\rm Ab}(D,n)$ at once, we have constructed a total of
\[2^{\tau(n)}h(\ko_{K,f_0})h(\ko_{K,f})\]
distinct product surfaces. However, the number of distinct product surfaces within $\Sigma^{\rm Ab}(D,n)$ is
\[\sum_{A \in \Sigma^{\rm Ab}(D,n)} \widetilde{\delta(A)}=\sum_{\substack{(f_1,f_2)=f_0 \\ f_1 f_2 =nf_0^2 }}h(\ko_{K,f_1})h(\ko_{K,f_2}).\]
We now show that these two numbers are indeed the same; therefore, this result yields a different proof of Ma's formula in most cases.

\begin{prop}
Unless $d_K \in \lbrace -3,-4\rbrace$ and $f_0 =1$, we have
\[2^{\tau(n)} h(\ko_{K,f})h(\ko_{K,f_0}) = \sum_{\substack{(f_1,f_2)=f_0 \\ f_1 f_2 =nf_0^2 }}h(\ko_{K,f_1})h(\ko_{K,f_2}).\]
\end{prop}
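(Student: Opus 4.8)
The plan is to deduce the identity directly from the class number formula (Theorem \ref{clnumbfor}), reducing it to an elementary statement about the arithmetic function
\[\psi(m) := m\prod_{p\mid m}\Bigl(1-\Bigl(\tfrac{d_K}{p}\Bigr)\tfrac1p\Bigr),\]
and then showing that, in the non‑excluded cases, \emph{every} summand on the right‑hand side already equals $h(\ko_{K,f_0})\,h(\ko_{K,f})$. First I would reparametrize the index set: writing $f_i=f_0 g_i$ for $i=1,2$, the conditions $\gcd(f_1,f_2)=f_0$ and $f_1f_2=nf_0^2$ become $\gcd(g_1,g_2)=1$ and $g_1g_2=n$, and then $f=\lcm(f_1,f_2)=f_0 g_1 g_2=f_0 n$. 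Since a coprime factorization $n=g_1 g_2$ amounts to one binary choice of where to send each maximal prime power dividing $n$, there are exactly $2^{\tau(n)}$ ordered such pairs, which accounts for the prefactor $2^{\tau(n)}$.

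The heart of the matter is the following multiplicativity‑type lemma: for coprime $a,b$ and any $m\geq 1$,
\[\psi(ma)\,\psi(mb)=\psi(m)\,\psi(mab).\]
I would prove it by writing $\psi(m)=m\cdot g(m)$ with $g(m)=\prod_{p\mid m}\bigl(1-(\tfrac{d_K}{p})\tfrac1p\bigr)$, a quantity depending only on the set of primes dividing $m$; the $m^2ab$ prefactors on the two sides match trivially, so one is reduced to $g(ma)g(mb)=g(m)g(mab)$, which follows by checking, prime by prime, that each Euler factor $1-(\tfrac{d_K}{p})\tfrac1p$ occurs with the same multiplicity on both sides — coprimality of $a$ and $b$ is exactly what makes the two counts agree. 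Applying the lemma with $m=f_0$, $a=g_1$, $b=g_2$ yields $\psi(f_1)\psi(f_2)=\psi(f_0)\,\psi(f_0 n)=\psi(f_0)\,\psi(f)$ for \emph{every} admissible pair $(f_1,f_2)$, independently of the chosen factorization.

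It remains to reinstate the unit indices. Theorem \ref{clnumbfor} reads $h(\ko_{K,m})=c_m\,\psi(m)$ with $c_m=h(\ko_K)/[\ko_K^\times:\ko_{K,m}^\times]$, and since $\ko_{K,m}^\times=\{\pm1\}$ as soon as $m>1$, the index $[\ko_K^\times:\ko_{K,m}^\times]$ equals $1$ for all $m$ when $d_K\notin\{-3,-4\}$, and equals $|\ko_K^\times|/2\in\{2,3\}$ for all $m>1$ when $d_K\in\{-3,-4\}$. In either case permitted by the hypothesis — $d_K\notin\{-3,-4\}$, or else $f_0\geq 2$ so that $f_0,f_1,f_2,f$ are all $>1$ — the constant $c_m$ takes a single common value $c$ for $m\in\{f_0,f_1,f_2,f\}$. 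Hence
\[\sum_{\substack{(f_1,f_2)=f_0\\ f_1f_2=nf_0^2}} h(\ko_{K,f_1})h(\ko_{K,f_2}) \;=\; c^2\!\!\sum_{\substack{(f_1,f_2)=f_0\\ f_1f_2=nf_0^2}}\!\!\psi(f_1)\psi(f_2) \;=\; c^2\cdot 2^{\tau(n)}\,\psi(f_0)\psi(f) \;=\; 2^{\tau(n)}\,h(\ko_{K,f_0})h(\ko_{K,f}),\]
which is the assertion. The only genuinely laborious point is the prime‑by‑prime verification of the key lemma; the other delicate point is recognizing that the excluded case $d_K\in\{-3,-4\}$, $f_0=1$ is precisely the one where $c_1=h(\ko_K)$ but $c_m=h(\ko_K)/(|\ko_K^\times|/2)$ for $m>1$, so that the constants no longer cancel uniformly and the term‑by‑term equality fails to yield the stated formula (it does survive when $n$ is a prime power, but not in general).
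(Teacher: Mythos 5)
Your proof is correct and takes essentially the same route as the paper's: both rest on the class number formula (Theorem \ref{clnumbfor}) together with the identity $\Pi_1\Pi_2=\Pi_0\,\Pi$ (your $\psi$-lemma, combined with $f_1f_2=ff_0$) and the count of $2^{\tau(n)}$ ordered coprime factorizations of $n$. The only difference is organizational — you verify directly that every ordered summand equals $h(\ko_{K,f_0})h(\ko_{K,f})$, whereas the paper first symmetrizes, peels off the $(f_0,f)$ pairs, and then makes the same term-by-term comparison on the remaining $2^{\tau(n)-1}-1$ summands — and your discussion of the unit indices in the excluded cases $d_K\in\lbrace-3,-4\rbrace$, $f_0=1$ is, if anything, more precise than the paper's.
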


\begin{proof}
We notice that
\begin{align*}
\sum_{\substack{(f_1,f_2)=f_0 \\ f_1 f_2 =nf_0^2 }}h(\ko_{K,f_1})h(\ko_{K,f_2}) &= 2\sum_{\substack{(f_1,f_2)=f_0 \\ f_1 f_2 =nf_0^2 \\ f_1 < f_2}}h(\ko_{K,f_1})h(\ko_{K,f_2})\\
&=2\sum_{\substack{(f_1,f_2)=f_0 \\ f_1 f_2 =nf_0^2 \\ f_0 \neq f_1 < f_2}}h(\ko_{K,f_1})h(\ko_{K,f_2}) + 2h(\ko_{K,f_0})h(\ko_{K,f}),
\end{align*} 
and so it is enough to prove that
\[(2^{\tau(n)-1}-1)h(\ko_{K,f})=\sum_{\substack{(f_1,f_2)=f_0 \\ f_1 f_2 =nf_0^2 \\ 1 \neq f_1 < f_2}} \frac{h(\ko_{K,f_1})h(\ko_{K,f_2})}{h(\ko_{K,f_0})}; \]
since the number of summands on the right-hand side is precisely $2^{\tau(n)-1}-1$, we are left to prove that 
\[\frac{h(\ko_{K,f_1})h(\ko_{K,f_2})}{h(\ko_{K,f_0})}=h(\ko_{K,f}).\]
But this is a consequence of the class number formula and some easy arithmetic: in fact, by the assumptions, 
\[[\ko_K^\times : \ko_{K,f_i}^\times] = \# \ko_K^\times, \qquad i=0,1,2.\]
Setting
\[\Pi_i:= \prod_{p \vert f_i} \Bigg(1- \Bigg(\dfrac{d_K}{p}\Bigg)\dfrac{1}{p} \Bigg), \qquad i=0,1,2,\emptyset\]
Theorem \ref{clnumbfor} yields
\begin{align*}
\dfrac{h(\ko_{K,f_1})h(\ko_{K,f_2})}{h(\ko_{K,f_0})} &= \dfrac{h(\ko_K) f_1 f_2 \Pi_1 \Pi_2}{f_0  \Pi_0 \#\ko_K^\times} = \dfrac{h(\ko_K) f}{\# \ko_K^\times} \cdot \dfrac{\Pi_1 \Pi_2}{\Pi_0}, 
\end{align*}
since $f=nf_0$ and $f_1 f_2 = nf_0^2 = f f_0$. However, it is straightforward to see that
\[\dfrac{\Pi_1 \Pi_2}{\Pi_0} = \Pi,\]
and therefore the proof is complete.
\end{proof}

\section{Decompositions in the remaining cases}\label{remainingcases}

\subsection{Number of decompositions and classification}
Unfortunately, the techniques employed thus far cannot be employed when $K=\Q(i)$ or $K=\Q(\sqrt{-3})$, as we might have nontrivial elements in $\text{Stab}\, C(\ko_{K,f_1}) \cap \text{Stab}\, C(\ko_{K,f_2})$. However, we are still able to completely solve the classification problem. First of all, we would like to point out that a formula for the number of decompositions in these two cases can be obtained just by counting, as the class number of $\ko_K$ is one.\\

\noindent
Assume first that $K=\Q(i)$ and that we are given a singular abelian surface $A$ with transcendental lattice $\begin{pmatrix} 2n & 0 \\ 0 & 2n \end{pmatrix}$; then, we have the following diagram,

\[\xymatrix{
 & &\ko_{K,f_1} \ar@{^{(}->}[ld] & \\
K & \ko_{K} \ar@{^{(}->}[l] & & \ko_{K,f} \ar@{^{(}->}[lu] \ar@{^{(}->}[ld] \\
 & &\ko_{K,f_2} \ar@{^{(}->}[lu] &
}
\]
\noindent
where $f_1$ and $f_2$ are relatively prime, and actually $f=n$. 

\begin{thm}\label{disc-4}
Given a singular abelian surface $A$ with transcendental lattice $\begin{pmatrix} 2n & 0 \\ 0 & 2n \end{pmatrix}$, the number of decompositions of $A$ into the product of two mutually isogenous elliptic curves with complex multiplication (up to isomorphism of the factors) is 
\[\tilde{\delta}(A) = (1 + 2^{\tau(n)-1}) h(\ko_{K,n}).\]
The surface $A$ is isomorphic to any of the products $(E_1,E_2)$, where $[E_i] \in \ke ll (\ko_{K,f_i})$  ($i=1,2$), $\gcd(f_1,f_2) =1$ and $f_1 f_2=n$.
\end{thm}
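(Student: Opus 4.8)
The plan is to read off all decompositions of $A$ directly from Proposition~\ref{thm_shioda-mitani}, observing that its arithmetic side condition is vacuous because $h(\ko_K)=h(-4)=1$, and then to obtain the count by a short computation with the class number formula (Theorem~\ref{clnumbfor}). Note that the machinery of Section~\ref{alldec} is \emph{not} available here: Theorem~\ref{thmthm} excludes exactly $K=\Q(i)$, so one cannot invoke transitivity of the $C(D)$-action together with triviality of a stabilizer; completeness of the list of decompositions must instead come from Proposition~\ref{thm_shioda-mitani} itself, and counting must be done by hand.

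First I would fix notation. The lattice $\rT(A)=\begin{pmatrix} 2n & 0 \\ 0 & 2n \end{pmatrix}$ corresponds to the form $Q=n\cdot(1,0,1)$, so $D_0=-4$, $f_0=1$, the index of primitivity of $Q$ is $m=n$, and $M_0=\Z+i\Z=\ko_K$. By Proposition~\ref{thm_shioda-mitani}, a decomposition $A\cong\C/M_1\times\C/M_2$ is exactly a pair of modules $(M_1,M_2)$, of conductors $f_1,f_2$, with
\[
\gcd(f_1,f_2)=f_0=1,\qquad f_1f_2=mf_0^2=n,\qquad M_1M_2\sim M_0 .
\]
Since $\gcd(f_1,f_2)=1$, the product module $M_1M_2$ has CM ring $\ko_{K,\gcd(f_1,f_2)}=\ko_K$, so its homothety class lies in $C(\ko_K)$; but $h(\ko_K)=h(-4)=1$, so $C(\ko_K)$ is trivial and the condition $M_1M_2\sim M_0=\ko_K$ is automatic. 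Consequently every pair $(E_1,E_2)$ with $[E_i]\in\ke ll(\ko_{K,f_i})$, $\gcd(f_1,f_2)=1$ and $f_1f_2=n$ is an admissible decomposition of $A$, and by the converse direction of Proposition~\ref{thm_shioda-mitani} these exhaust all decompositions. This is the classification assertion.

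For the count I would use that CM elliptic curves with distinct CM orders are non-isomorphic, while curves with a common CM order $\ko_{K,f}$ are isomorphic precisely when the associated proper ideal classes in $C(\ko_{K,f})$ coincide (so $\#\ke ll(\ko_{K,f})=h(\ko_{K,f})$); hence distinct quadruples $(f_1,f_2,[\ga_1],[\ga_2])$ give distinct strict decompositions, and every strict decomposition arises this way, so
\[
\widetilde{\delta}(A)=\sum_{\substack{f_1f_2=n\\ \gcd(f_1,f_2)=1}} h(\ko_{K,f_1})\,h(\ko_{K,f_2}),
\]
the sum over ordered coprime factorisations of $n$ (for $n=1$ the unique term is $h(\ko_{K,1})^2=1$, the closed form below being meant for $n>1$, just as in Theorem~\ref{thm_ma}(4)). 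By Theorem~\ref{clnumbfor}, for $K=\Q(i)$ one has $h(\ko_{K,1})=1$ and, for $f>1$, $h(\ko_{K,f})=\tfrac12 g(f)$ with $g(f):=f\prod_{p\mid f}\bigl(1-\bigl(\tfrac{-4}{p}\bigr)\tfrac1p\bigr)$ multiplicative, since $[\ko_K^\times:\ko_{K,f}^\times]$ equals $1$ for $f=1$ and $2$ for $f>1$. Among the $2^{\tau(n)}$ ordered coprime factorisations, the two with a trivial factor, $(1,n)$ and $(n,1)$, each contribute $h(\ko_{K,1})h(\ko_{K,n})=h(\ko_{K,n})$, while the remaining $2^{\tau(n)}-2$, having both factors $>1$, each contribute $\tfrac14 g(f_1)g(f_2)=\tfrac14 g(n)=\tfrac12 h(\ko_{K,n})$ by multiplicativity and coprimality; summing gives
\[
\widetilde{\delta}(A)=2\,h(\ko_{K,n})+\bigl(2^{\tau(n)}-2\bigr)\cdot\tfrac12\,h(\ko_{K,n})=\bigl(1+2^{\tau(n)-1}\bigr)h(\ko_{K,n}).
\]
The argument has no serious obstacle; the one point that must be handled correctly is that the completeness of the list of decompositions cannot be imported from Section~\ref{alldec} and must be obtained from Proposition~\ref{thm_shioda-mitani} directly, which works only because $h(-4)=1$ kills the condition $M_1M_2\sim M_0$. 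The remaining labour --- identifying $\ke ll(\ko_{K,f})$ with $C(\ko_{K,f})$ and evaluating the sum --- is routine bookkeeping, the only delicate spots being the unit indices in the class number formula and the separate treatment of the factorisations involving $\ko_K$ itself.
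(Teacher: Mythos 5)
Your proposal is correct and follows essentially the same route as the paper: the classification comes from Proposition~\ref{thm_shioda-mitani} with the condition $M_1M_2\sim M_0$ rendered vacuous by $h(\ko_K)=1$, and the count is the sum $\sum h(\ko_{K,f_1})h(\ko_{K,f_2})$ over coprime factorisations of $n$, evaluated via Theorem~\ref{clnumbfor} exactly as in the paper (your use of multiplicativity of $g(f)$ is just the paper's $\Pi_1\Pi_2=\Pi$ in different notation). Your explicit remarks on the unit index, on $\#\ke ll(\ko_{K,f})=h(\ko_{K,f})$, and on the $n=1$ case merely spell out steps the paper leaves implicit.
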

\begin{proof}
Since $h(\ko_K)=1$, we see that

\begin{align*}
\tilde{\delta}(A) &= \sum_{\substack{(f_1,f_2)=1 \\ f_1 f_2 =n }}h(\ko_{K,f_1})h(\ko_{K,f_2}) = 2\sum_{\substack{(f_1,f_2)=1 \\ f_1 f_2 =n \\ f_1 < f_2}}h(\ko_{K,f_1})h(\ko_{K,f_2})\\
&=2\sum_{\substack{(f_1,f_2)=1 \\ f_1 f_2 =n \\ 1 \neq f_1 < f_2}}h(\ko_{K,f_1})h(\ko_{K,f_2}) + 2h(\ko_{K,n}).
\end{align*}

\noindent
Following the notation from earlier, since $\#\ko_K^\times = 4$, the class number formula implies that $h(\ko_{K,f_i}) = f_i \Pi_i /2$. Therefore, $h(\ko_{K,f_1})h(\ko_{K,f_2}) = n \Pi /4$, and thus
 
\begin{align*}
\tilde{\delta}(A) &= (2^{\tau(n)-1}-1)n \Pi /2 + n \Pi \\
&= \frac{1}{2}n \Pi (1 + 2^{\tau(n)-1}) = (1 + 2^{\tau(n)-1}) h(\ko_{K,n}).
\end{align*}

\noindent
So we are able to exhibit a formula for the number of decompositions of such a singular abelian surface. Also, the classification problem is solved, as we can just take all pairs $(E_1,E_2)$ fitting in the diagram above. 
\end{proof}

\noindent
The case $K=\Q(\sqrt{-3})$ is analogous, and thus the proof of Theorem \ref{disc-3} is the same, except for the fact that $\# \ko_{K} ^\times = 6$. 

\begin{thm}\label{disc-3}
Given a singular abelian surface $A$ with transcendental lattice $\begin{pmatrix} 2n & n \\ n & 2n \end{pmatrix}$, the number of decompositions of $A$ into the product of two mutually isogenous elliptic curves with complex multiplication (up to isomorphism of the factors) is
\[\tilde{\delta}(A) = \frac{2}{3}(2 + 2^{\tau(n)-1}) h(\ko_{K,n}).\]
The surface $A$ is isomorphic to any of the products $(E_1,E_2)$, where $[E_i] \in \ke ll (\ko_{K,f_i})$  ($i=1,2$), $\gcd(f_1,f_2) =1$ and $f_1 f_2=n$.
\end{thm}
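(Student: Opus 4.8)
The plan is to follow the proof of Theorem \ref{disc-4} almost verbatim, the only change being that $\#\ko_K^\times = 6$ here instead of $4$. Write $K = \Q(\sqrt{-3})$, so $d_K = -3$; the transcendental lattice $\begin{pmatrix} 2n & n \\ n & 2n \end{pmatrix}$ equals $n$ times the principal form $(1,1,1)$ of discriminant $-3$, so in the notation of Proposition \ref{thm_shioda-mitani} we have $D_0 = -3$, $f_0 = 1$, $D = -3n^2$ and $f = n$ (and we may assume $n>1$, since $n=1$ is the primitive case already settled by Corollary \ref{cor_ma2}). The first step is to note that, because $h(\ko_K)=1$, \emph{every} pair of modules $M_1, M_2$ whose conductors $f_1, f_2$ satisfy $\gcd(f_1,f_2)=1$ and $f_1 f_2 = n$ is admissible. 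Indeed, the three conditions of Proposition \ref{thm_shioda-mitani} become $M_1 M_2 \sim M_0$, $\gcd(f_1,f_2)=1$ and $f_1 f_2 = n$; the last two hold by construction, and the first is automatic since $\ko_{M_1 M_2} = \ko_{K,\gcd(f_1,f_2)} = \ko_K$ makes $M_1M_2$ a proper fractional $\ko_K$-ideal, hence principal, hence homothetic to $M_0 = \ko_K$. Conversely, Proposition \ref{thm_shioda-mitani} says every decomposition of $A$ has exactly this shape. This is the step at which the stabilizer obstruction of Question \ref{question} is sidestepped: we never need the $C(D)$-action to be free, because class number one forces all candidate pairs to work.

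Second, I would turn this into a count. A strict isomorphism class of decompositions of $A$ is the data of an ordered pair $(f_1,f_2)$ with $\gcd(f_1,f_2)=1$ and $f_1 f_2 = n$, together with an ideal class in $C(\ko_{K,f_1})$ and one in $C(\ko_{K,f_2})$; two such data are strictly isomorphic iff they coincide, since CM elliptic curves attached to distinct orders are non-isomorphic and, within a fixed order, isomorphism classes biject with the class group. Hence
\[\tilde{\delta}(A) = \sum_{\substack{\gcd(f_1,f_2)=1\\ f_1 f_2 = n}} h(\ko_{K,f_1})\,h(\ko_{K,f_2}),\]
the sum being over ordered pairs. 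As $n>1$ forces $f_1 \neq f_2$, each unordered factorization appears twice; pulling out the mirror pair $(1,n),(n,1)$ and using $h(\ko_{K,1}) = h(\ko_K) = 1$ gives
\[\tilde{\delta}(A) = 2\,h(\ko_{K,n}) + 2\!\!\sum_{\substack{\gcd(f_1,f_2)=1,\ f_1f_2=n\\ 1 \neq f_1 < f_2}}\!\! h(\ko_{K,f_1})\,h(\ko_{K,f_2}),\]
where the remaining sum has exactly $2^{\tau(n)-1}-1$ terms.

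Third, I would evaluate by the class number formula (Theorem \ref{clnumbfor}). For every $f>1$ one has $\ko_{K,f}^\times = \{\pm 1\}$, hence $[\ko_K^\times : \ko_{K,f}^\times] = 3$ and $h(\ko_{K,f}) = \tfrac13\, f\,\Pi_f$ with $\Pi_f := \prod_{p \mid f}\bigl(1 - \bigl(\tfrac{d_K}{p}\bigr)\tfrac1p\bigr)$. When $\gcd(f_1,f_2)=1$ and $f_1f_2 = n$ the prime divisors of $n$ are partitioned between $f_1$ and $f_2$, so $\Pi_{f_1}\Pi_{f_2} = \Pi_n$ and $h(\ko_{K,f_1})h(\ko_{K,f_2}) = \tfrac19\,n\,\Pi_n$ for the $2^{\tau(n)-1}-1$ relevant terms, while $h(\ko_{K,n}) = \tfrac13\,n\,\Pi_n$. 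Substituting,
\[\tilde{\delta}(A) = \tfrac23\,n\,\Pi_n + \tfrac29\bigl(2^{\tau(n)-1}-1\bigr)n\,\Pi_n = \tfrac{n\,\Pi_n}{9}\bigl(2^{\tau(n)}+4\bigr) = \tfrac23\bigl(2 + 2^{\tau(n)-1}\bigr)h(\ko_{K,n}),\]
which is the asserted formula; the classification assertion --- that $A \cong E_1 \times E_2$ for every pair with $[E_i] \in \ke ll(\ko_{K,f_i})$, $\gcd(f_1,f_2)=1$, $f_1 f_2 = n$ --- is precisely the admissibility criterion from the first step. The only genuinely delicate point is that first step, namely that each candidate pair reconstructs $A$ itself rather than some other singular abelian surface with the same transcendental lattice; this is handled cleanly by Proposition \ref{thm_shioda-mitani} together with $h(\ko_K)=1$. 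Everything else is the same manipulation as in the $\Q(i)$ case, with $3$ replacing $2$ in the unit-index factors.
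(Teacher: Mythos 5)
Your proof is correct and follows essentially the same route as the paper: the paper proves Theorem \ref{disc-3} by repeating the argument for Theorem \ref{disc-4} with $[\ko_K^\times:\ko_{K,f}^\times]=3$ in place of $2$, i.e.\ using $h(\ko_K)=1$ to make every coprime pair $(f_1,f_2)$ with $f_1f_2=n$ admissible, splitting off the pair $(1,n)$, and evaluating via the class number formula. Your write-up merely makes explicit what the paper leaves implicit (the appeal to Proposition \ref{thm_shioda-mitani} for admissibility and the harmless restriction to $n>1$), so there is nothing to add.
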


\noindent
Although the counting and the classification problems are completely solved, it is not clear whether the action of the class group $C(\ko_{K,f})$ spans the set of decompositions of $A$.

\subsection{Application: Shioda-Inose models of singular K3 surfaces}

Let $X$ be a singular K3 surface, and let $\rT(X)$ denote its transcendental lattice. By results of Shioda and Inose \cite{shioda-inose77}, there exists a singular abelian surface $A=E_1 \times E_2$ such that $\rT(A) = \rT(X)$; moreover, there is a model of $X$ which is given in terms of the $j$-invariants of $E_1$ and $E_2$. In \cite{schuett07}, the author gives a very nice model for such a K3 surface, namely $X$ has the following model as an elliptic fibration
\[ X: \qquad y^2 = x^3 -3ABt^4x+ABt^5(Bt^2-2Bt+1),\]
where $A=j_1 j_2$ and $B=(1-j_1)(1-j_2)$, $j_k$ being the $j$-invariant of $E_k$ ($k=1,2$). It follows that our classification of the decompositions of a singular abelian surface gives all the possible Shioda-Inose models of $X$, i.e. all the possible models of $X$ which are realizable via a Shioda-Inose structure.

\subsection{Application: fields of moduli of singular K3 surfaces}

We now provide an application of our results to the theory of singular K3 surfaces. It is well-known that every singular abelian surface yields a singular K3 surface by means of a Shioda-Inose structure \cite{shioda-inose77}. We are interested in some arithmetic invariant of the latter, namely the field of moduli. We recall that $M$ is the (absolute) \textit{field of moduli} of a variety $X$ over a number field if for all automorphisms $\sigma \in \Aut(\C/\Q)$,
\[X^\sigma \in [X] \Longleftrightarrow \text{$\sigma$ acts trivially on $M$},\]
where by $[X]$ we denote the isomorphism class of $X$.
The field of moduli exists and is unique, since by Galois theory it is equivalently defined as the fixed field of the group
\[G:=\lbrace \sigma \in \Aut(\C/\Q) \, \vert \, X^\sigma \in [X] \rbrace.\]

\noindent
Generalizing a previous result of Shimada \cite{shimada09}, Sch\"{u}tt was able to prove the following result

\begin{thm}[Theorem 5.2 in \cite{schuett07}]
Let $X$ be a singular K3 surface, and let $\rT(X)$ be its transcendental lattice. Assume that $X$ is defined over a Galois extension $L/K$, where $K=\Q(\disc \rT(A))$. Then, the action of the Galois group $\Gal(L/K)$ spans the genus of $\rT(X)$, i.e.

\[(\text{genus of $\rT(X)$}) = \lbrace [\rT(X^\sigma)] \, \vert \, \sigma \in \Gal(L/K) \rbrace.\]
\end{thm}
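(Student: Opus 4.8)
The plan is to transfer the statement to the singular abelian surface underlying $X$, and then read it off the main theorem of complex multiplication together with the transcendental lattice formula of Proposition \ref{thm2}. Write $D := \disc\rT(X) < 0$, so that $K = \Q(\sqrt{D})$; let $\ko := \ko_{K,f}$ be the order with $\disc\ko = D$ and let $H$ be its ring class field, so $\Gal(H/K) \cong C(\ko) \cong C(D)$. A first, soft reduction: because $X$ is defined over the number field $L$ and $L/K$ is Galois, for $\sigma \in \Aut(\C/K)$ the conjugate $X^\sigma$ depends only on $\sigma|_L$, and $\Aut(\C/K) \twoheadrightarrow \Gal(L/K)$; hence $\{[\rT(X^\sigma)] : \sigma \in \Gal(L/K)\} = \{[\rT(X^\sigma)] : \sigma \in \Aut(\C/K)\}$, and we may enlarge $L$ freely, in particular assume $H \subseteq L$. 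Next I would pass to the abelian side: by Shioda--Inose \cite{shioda-inose77} there is a singular abelian surface $A = E_1 \times E_2$ with $\rT(A) = \rT(X)$, where $E_i$ has complex multiplication by $\ko_{K,f_i}$ with $f_i \mid f$ (forced by the discriminant bookkeeping underlying Proposition \ref{thm2}, since $D = \lcm(f_1,f_2)^2 d_K$). Using the explicit Shioda--Inose model recalled above, which is defined over $K(j(E_1),j(E_2))$ and depends Galois-equivariantly on the pair of $j$-invariants, $X^\sigma$ is a Shioda--Inose partner of $A^\sigma = E_1^\sigma \times E_2^\sigma$, so $[\rT(X^\sigma)] = [\rT(A^\sigma)]$; thus it suffices to show that $\{[\rT(A^\sigma)] : \sigma \in \Aut(\C/K)\}$ is the genus of $\rT(A)$.

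The heart of the argument is then a computation. Since $\ko \subseteq \End E_i$, we have $K(j(E_1),j(E_2)) = H_1 H_2 \subseteq H$, where $H_i$ is the ring class field of $\ko_{K,f_i}$; hence the Galois action on the pair $(E_1, E_2)$, and therefore on $\rT(A^\sigma)$, factors through $\Gal(H/K) \cong C(\ko)$. For $\sigma$ with Artin class $[\ga] \in C(\ko)$ the main theorem of complex multiplication gives $E_i^\sigma \cong \C/\ga^{-1}\gb_i$ if $E_i \cong \C/\gb_i$, where the class of $\ga^{-1}\gb_i$ is read off via the restriction $C(\ko) \to C(\ko_{K,f_i})$. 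Plugging this into the identification $[\rT(E_1 \times E_2)] = [\gb_1] * [\gb_2]$ of Proposition \ref{thm2} (i.e.\ $\rT$ is, up to its index of primitivity, the product of the ideal classes of the two factors), and using that the product of the two restricted classes of $\ga$ equals the square of the image $[\ga_0]$ of $[\ga]$ in $C(D_0)$, $D_0 := \gcd(f_1,f_2)^2 d_K$ --- which amounts to $\ko_{K,f_1}\ko_{K,f_2} = \ko_{K,\gcd(f_1,f_2)}$ together with compatibility of the restriction maps --- one obtains, in $\overline{C}(D)$,
\[\big[\rT(A^\sigma)\big] = [\ga_0]^{-2}\circledast\big[\rT(A)\big].\]
As $\sigma$ runs over $\Aut(\C/K)$ the image $[\ga_0]$ runs over all of $C(D_0)$, because $C(\ko) \to C(\ko_{K,\gcd(f_1,f_2)})$ is surjective; hence $[\ga_0]^{-2}$ runs over the group of squares $C(D_0)^2$. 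By Gauss's theory of genera $C(D_0)^2$ is exactly the principal genus of discriminant $D_0$, and the genera are its cosets, so the orbit $C(D_0)^2 \circledast [\rT(A)]$ is precisely the genus of $\rT(A) = \rT(X)$ (the common index of primitivity of $\rT(A)$ and all of its conjugates just being carried along). Both inclusions asserted by the theorem follow simultaneously. It is worth noting that, in contrast to the count of decompositions treated in Section \ref{remainingcases}, the fields $K = \Q(i)$ and $K = \Q(\sqrt{-3})$ present no extra difficulty here: the possible discrepancy $H \supsetneq H_1 H_2$ of Proposition \ref{key_cond} is irrelevant, as all one uses is $H_1 H_2 \subseteq H$ and the surjectivity of $C(\ko) \to C(\ko_{K,\gcd(f_1,f_2)})$, both always valid.

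The step I expect to be the real obstacle is pinning down the signs and conjugations in the displayed identity. One must check (i) that reducing to $\Aut(\C/K)$ rather than $\Aut(\C/\Q)$ genuinely removes complex conjugation, so that each factor contributes $[\ga]^{-1}$ and not its complex conjugate; (ii) that $\rT(E_1 \times E_2)$ corresponds to the honest product $[\gb_1][\gb_2]$ of the ideal classes rather than to $[\gb_1][\gb_2]^{-1}$ --- with the wrong convention one would absurdly conclude that the orbit is a single point; and (iii) that the extensions and restrictions of ideal classes between $C(\ko)$, $C(\ko_{K,f_i})$ and $C(D_0)$ behave as claimed. These are exactly the ingredients that control how the arithmetic Frobenius translates into the class-group action on the transcendental lattice, and once they are in place the genus statement is immediate.
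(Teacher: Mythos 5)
The paper itself gives no proof of this statement — it is quoted directly as Theorem 5.2 of \cite{schuett07} — so there is no internal argument to compare against; your proposal is correct and is essentially the original argument of Sch\"utt. Namely: reduce to conjugates over $\Aut(\C/K)$, transfer to a product abelian surface $E_1\times E_2$ via the Galois-equivariant Shioda--Inose model, let $\sigma$ act through the Artin map so that each factor is twisted by the inverse of the (restricted) class $[\ga]$, deduce $[\rT(X^\sigma)]=[\ga_0]^{-2}\circledast[\rT(X)]$ from the composition formula for transcendental lattices, and finish with surjectivity of $C(\ko_{K,f})\to C(\ko_{K,f_0})$ together with Gauss's principal genus theorem (the squares being exactly the principal genus), the primitivity index being a genus invariant carried along unchanged.
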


\noindent
Set $L:=H(\disc \rT(X))$ in the statement above, where $H(D)$ denotes the ring class field of the order in $K$ of discriminant $D$, for $D < 0$. Class field theory tells tells us that
\[\Gal(L/\Q) \cong \Gal(L/K) \rtimes \Gal(K/\Q),\]
where $\Gal(K/\Q)$ accounts for the complex conjugation (for a reference, see \cite[Ch.~9]{cox13}). But complex conjugation has the effect of sending a singular K3 surface of transcendental lattice $\begin{pmatrix} 2a & b \\ b & 2c \end{pmatrix}$ to the singular K3 surface with transcendental lattice $\begin{pmatrix} 2a & -b \\ -b & 2c \end{pmatrix}$, so it acts as inversion on the corresponding class group (see \cite{shioda-mitani74} and \cite{schuett07}). By observing that a form and its inverse lie in the same genus, we conclude that
\begin{align*}
(\text{genus of $\rT(X)$}) =& \lbrace [\rT(X^\sigma)] \, \vert \, \sigma \in \Gal(L/K) \rbrace =\\
=& \lbrace [\rT(X^\sigma)] \, \vert \, \sigma \in \Gal(L/\Q) \rbrace.
\end{align*}
This suggests a connection between the field of moduli of a singular K3 surface and the genus of its transcendental lattice. In fact, the problem of characterizing the field of moduli of singular K3 surfaces is dealt with in a paper of the author \cite{laface16}.\\

\noindent
The classification of decompositions of a singular abelian surface allows us to tell something more about the field of moduli of $X$. Recall that $M$ is contained in the intersection of all possible fields of definition. Then, by means of a Shioda-Inose structure starting from a suitable singular abelian surface $E_1 \times E_2$, $X$ admits a model over $\Q(j_1 j_2, j_1 +j_2)$ by a result of Sch\"utt \cite{schuett07}. Therefore, considering all admissible pairs $(E_1,E_2)$ such that $\rT(E_1 \times E_2) = \rT(X)$, we see that
\[M \subseteq \bigcap_{\text{$X$ defined over $L$}} L \subseteq \bigcap_{\text{$j_1,j_2$ as above}} \Q(j_1 j_2, j_1 +j_2).\]
We deduce a slightly clearer picture of what $M$ looks like, as we know where it has to sit as an extension of $\Q$. Namely, $M$ lies in right-hand side above, which is not hard to describe theoretically. In practice, describing it is a hard task, as this involves the computation of several $j$-invariants.

\subsection{Open problems}
The present treatment deals with decompositions of singular abelian surfaces, but one might want to investigate the possible decompositions in the case of singular abelian varieties of higher dimension. It was proven by Katsura \cite{katsura75} that such a variety is isomorphic to the product of mutually isogenous elliptic curves with CM.

\begin{problem}
Given a singular abelian variety $A$, how many decompositions of $A$ into the product of mutually isogenous elliptic curves are there? What are the possible decompositions?
\end{problem}

\bibliographystyle{plain}
\bibliography{bib}{}
\end{document}